\newcommand{\supp}{\mbox{supp}}
\newcommand{\R}{{\mathbb R}}
\numberwithin{equation}{section}
\newcommand{\eu}[1]{\textcolor{black}{#1}}
\newtheorem{theorem}{Theorem}[section]{\bf}{\it}
\newtheorem{lemma}[theorem]{Lemma}{\bf}{\it}
{\bf}{\it}
\newtheorem{proposition}[theorem]{Proposition}{\bf}{\it}
{\bf}{\it}
\newtheorem{reminder*}[theorem]{Reminder}
\newtheorem{details*}[theorem]{Details}
\newtheorem{comm*}[theorem]{Comment}
\newtheorem{definition}[theorem]{Definition} 
\newtheorem{definition*}[theorem]{Definition}
\newtheorem{notation*}[theorem]{Notation}
\newtheorem{remark}[theorem]{Remark}
\renewcommand{\theequation}{\arabic{section}.\arabic{equation}}
	\title{On the self-similar behaviour of coagulation systems with injection}
\date{}
\begin{document}
	
	\author{Marina A. Ferreira \and Eugenia Franco \and Juan J.L. Vel\'azquez} 
%
%
%

	\maketitle





\begin{abstract}
In this paper we prove the existence of a family of self-similar solutions for a class of coagulation equations with a constant flux of particles from the origin. These solutions are expected to describe the longtime asymptotics of Smoluchowski's coagulation equations with a time independent source of clusters concentrated in small sizes.
The self-similar profiles are shown to be smooth, provided the coagulation kernel is also smooth. Moreover, the self-similar profiles are estimated from above and from below by $x^{-(\gamma+3)/2}$  \eu{as $x \to 0$, where $\gamma <1 $ is the homogeneity of the kernel}, and are proven to decay at least exponentially as $x \to \infty$. 
\end{abstract}

\textbf{Keywords: }Self-similarity;  Smoluchowski's coagulation equation;  Constant flux solutions; Source term; Moment bounds

\section{Introduction}
\subsection{\eu{Aim of the paper}}
Smoluchowski's coagulation equation, introduced by the physicist Marian von Smoluchowski in 1916, (cf.\cite{smoluchowski1916drei}),
is a mean field model describing a system of clusters evolving in time due to coagulation upon binary collision between clusters. 
The solution of the Smoluchowski's equation, $f(t,x)$, represents the number density of clusters of size $x$ at time $t$ and it is governed by the following integro-differential equation
\begin{equation} \label{eq:coagulation}
\partial_t f(t,x)=\mathbb K [f](t,x),
\end{equation}
where $\mathbb K $ is the coagulation operator defined by
\begin{equation}\label{eq:coag_term}
\mathbb K [f](t,x) := \frac{1}{2} \int_0^x K(x-y,y) f(t,x-y)f(t,y) dy - \int_0^\infty K(x,y) f(t,x) f(t,y) dy.
\end{equation}

The kernel $K(x,y)$ is the coagulation rate of a cluster of size $y$ with a cluster of size $x$, and it summarizes the microscopical mechanisms underlying coagulation.  
Different kernels may induce completely different dynamics. For an overview on Smoluchowski's coagulation equations with different kernels we refer to \cite{aldous1999deterministic} and \cite{banasiak2019analytic}. 

Equation \eqref{eq:coagulation}, as well as its discrete counterpart, has been extensively used as a modeling tool. Polymerization \cite{blatz1945note}, animal grouping \cite{gueron1995dynamics}, hemagglutination \cite{samsel1982kinetics}, planetesimal aggregation \cite{johansen2008coagulation}, atmospheric aerosol formation \cite{friedlander2000smoke}, \cite{mcgrath2012atmospheric}, \cite{seinfeld2016atmospheric} are just some examples of applications.  

Several classes of coagulation kernels have been derived in the physical/chemical literature. The specific form of the kernels depends on mechanisms yielding the aggregation of the particles (cf. \cite{friedlander2000smoke}). Many of the kernels relevant in applications satisfy
\begin{equation}\label{eq:kernel}
c_1\left(  x^{\gamma+\lambda} y^{-\lambda} + y^{\gamma+\lambda} x^{-\lambda} \right)\leq K(x,y) \leq c_2 \left( x^{\gamma+\lambda} y^{-\lambda} + y^{\gamma+\lambda} x^{-\lambda}\right),
\end{equation}
where $0 < c_1 \leq c_2 < \infty $ and $ \gamma, \lambda \in \mathbb R$. 

Two relevant kernels in aerosol science are the Brownian kernel and the free molecular coagulation kernel \cite{friedlander2000smoke}. 
The exponents $\gamma $ and $\lambda $ in \eqref{eq:kernel}, associated to these two kernels are $(\gamma, \lambda)=(0, 1/3)$  and $(\gamma, \lambda)=(1/6,1/2) $ respectively. 
Both kernels yield the aggregation rate for a set of molecules (monomers) immersed in the air. The difference between the two kernels is that, in the first case, the mean free path of the molecules is smaller than the cluster sizes, while it is much larger in the second case (for a more detailed discussion see \eu{ \cite{ferreira2019stationary} and \cite{friedlander2000smoke}}).

We focus on the coagulation equation with source
\begin{equation} \label{eq:coagulation with source} 
\partial_t f(t,x)=\mathbb K [f](t,x)+ \eta (x)
\end{equation}
with $\mathbb K $ defined by \eqref{eq:coag_term}.
The term $\eta $ in \eqref{eq:coagulation with source} is a measure that
 represents a time independent source of particles, which is the main difference of this equation compared to the classical pure coagulation equation \eqref{eq:coagulation}. 

Equation \eqref{eq:coagulation with source} has not been studied as much as the classical coagulation equation in the mathematical literature, despite its relevance in atmospheric physics (see for instance \cite{mcgrath2012atmospheric}). 
The existence of weak time-dependent solutions of \eqref{eq:coagulation with source} has been proven, under specific assumptions on the coagulation kernel, in \cite{dubovskiui1994mathematical} and \cite{escobedo2006dust}.
The long term asymptotic behaviour of \eqref{eq:coagulation with source} has been studied in \cite{davies1999smoluchowski} with a combination of numerical simulations and matched asymptotics expansions for the kernels
\begin{equation}\label{kernel king}
K(x,y)=x^a y^b + y^a x^b, \quad a, b \geq 0. 
\end{equation}
This corresponds to $\gamma=a+b$ and $b=-\lambda$ in \eqref{eq:kernel}.

In the non-gelling regime, 
$
a+b < 1,
$
(and $a, b \geq 0$) the results of \cite{davies1999smoluchowski} suggest that the long term behaviour of a large class of solutions to \eqref{eq:coagulation with source} behave as the self-similar solution 
\begin{equation}\label{ss profile}
f_s(t,x)= \frac{1}{t^{\frac{3+\gamma}{1-\gamma}}} \phi_s\left(y\right) \text{ with } y=\frac{x}{t^{\frac{2}{1-\gamma}}}
\end{equation}
as time goes to infinity. 
 According to \cite{davies1999smoluchowski} the self-similar profile $\phi_s$ behaves as the power law 
$ y^{-\frac{\gamma+3}{2}} $ as $y$ tends to zero and decays at least exponentially as $y$ tends to infinity.

For $x$ of order $1$ we expect $f$ to behave as a steady state solution to \eqref{eq:coagulation with source} i.e. a solution to 
\begin{equation}\label{eq:steady state}
\mathbb K[f](x)+\eta(x)=0.
\end{equation}

The solutions of \eqref{eq:steady state} have been studied in \cite{dubovskiui1994mathematical} for bounded kernels, and in \cite{hayakawa1987irreversible} for the discrete coagulation equation for kernels of the form \eqref{kernel king} when $a$ and $b$ can take both positive and negative values. It is then proved that a solution to the discrete version of \eqref{eq:steady state} exists for the range of exponents $\max\{\gamma+\lambda, -\lambda\} < 1 $, $-1 \leq \gamma \leq 2 $ and $|\gamma+2\lambda|<1$. 
 
More recently, the existence of a solution to \eqref{eq:steady state} has been studied in \cite{ferreira2019stationary}, both for the continuous and the discrete cases for kernels of the form \eqref{eq:kernel}.
Specifically, it has been proven there that, if $| \gamma+2\lambda |<1$, then there exists at least a solution of \eqref{eq:steady state} and, instead, if $| \gamma+2\lambda |\geq 1 $, then equation \eqref{eq:steady state} does not have any solution. 
This implies that if the coagulation kernel is the Brownian kernel then there exists a solution of \eqref{eq:steady state} and, if the coagulation kernel is the free molecular kernel, then \eqref{eq:steady state} does not have any solution. We remark that for the kernels of the form \eqref{kernel king} considered in \cite{davies1999smoluchowski} if $a+b <1 $, then, since $a,b \geq 0$, we have that $| \gamma + 2\lambda |<1$ and a steady state solving \eqref{eq:steady state} always exists. 

Results analogous to those in \cite{ferreira2019stationary} have been obtained in \cite{laurenccot2020stationary} under different regularity assumptions on the coagulation kernels, the source $\eta$ and the solutions, as well as an additional monotonicity assumption on the kernel.

In this paper we prove, under assumptions on the parameters $\gamma $ and $\lambda$, the existence of a self-similar solution of the coagulation equation with constant flux coming from the origin, that can be formally written as 
\begin{equation} \label{intro: source with flux}
\partial_t (x  f (x))= x\mathbb K [f](x) + \delta_0, 
\end{equation}
where $\delta_0$ is the Dirac mass at $\{0\}.$
A precise definition of equation \eqref{intro: source with flux} will be presented in Section \ref{sec:setting}, Definition \ref{def:coag eq flu origin}. 
This result on the existence of self-similar solutions of equation \eqref{intro: source with flux} is the main novelty presented in this work.
In agreement with the results obtained in \cite{davies1999smoluchowski}, these self-similar solutions are expected to represent the longtime behaviour of the solutions of \eqref{eq:coagulation with source}, where we consider  $\eta$ to be a Radon measure with bounded first moment and decreasing fast enough for large sizes.   

The self-similar profiles $\phi_s$ characterizing the self-similar solutions are constructed as the limit as $\varepsilon \rightarrow 0$, of a sequence of stationary solutions of certain coagulation equations with source $\eta_\varepsilon$, where we assume that $x \eta_\varepsilon(x) $ tends to the Dirac measure supported at $\{0 \} $ as $\varepsilon \rightarrow 0.$ This is the main technical novelty of this paper and it requires to prove uniform estimates for the solutions. We also present some results on the regularity of the self-similar profiles and on their asymptotic behaviour for small and large clusters. 

We focus on non-gelling kernels, (see \cite{escobedo2002gelation} for a complete explanation of the gelation regimes),  for which a stationary solution exists, i.e., we will consider 
\begin{equation}\label{parameter assumption}
\gamma <1, \quad |\gamma+2\lambda |<1. 
\end{equation}

\eu{ 
By \cite{ferreira2019stationary} we know that, since $|\gamma +2 \lambda |< 1$, equation \eqref{eq:coagulation with source} admits a steady state solution $f$.  
Hence, we expect the solutions of equation \eqref{eq:coagulation with source} to approach a steady state solution when $x$ is of order $1$ and time goes to infinity. 
For every integrable function $f$ we denote by $J_f $ the flux associated with equation \eqref{eq:steady state}
\begin{equation}\label{flux}
J_f(z):= \int_0^z \int_{z-x}^\infty x K(y,x) f(y) f(x) dy dx.
\end{equation} }
\eu{ The analysis in \cite{ferreira2019stationary} shows that the reason because there exists at least a solution of \eqref{eq:steady state} is that the contribution to $J_f$ due to the interaction of particles of very different sizes,  $x \ll y $ or $y \ll x $, is negligible compared to the contribution to $J_f$ due to the interaction of particles of comparable sizes, $x \approx y$. 
On the contrary, when $|\gamma + 2 \lambda | \geq 1 $ (to be considered in another paper), the fact that a solution to \eqref{eq:steady state} does not exist is due to the fact that the collisions between particles of very different sizes drive very quickly the mass towards infinity. 
Therefore, in the time dependent problem we expect that if $|\gamma + 2 \lambda | \geq 1$ one will need to take into account the interaction between particles of different sizes and the behaviour of the time dependent solutions of \eqref{eq:coagulation with source} is expected to be different. In this case we expect that $f(t,x) \rightarrow 0 $ for $x$ of order  $1 $ as time goes to infinity. For this range of parameters, the existence of self-similar solutions is not studied in \cite{davies1999smoluchowski} and might be object of a future work.}

\subsection{Notation and plan of the paper}
Before beginning with the technical contents of the paper, hoping to help the reader, we clarify the notation that we adopt in this work.

First of all we employ the notation $\mathbb R_*:=(0, \infty)$, $\mathbb R_+:=[0, \infty)$. Moreover, we denote by $\mathcal L $ the Lebesgue measure. 
For any interval $I \subset \mathbb R $, we denote by $C_c(I)$ the space of continuous functions with compact support endowed with the supremum norm, denoted with $\| \cdot \|_\infty $. We denote by $C_0(\mathbb R_*)$ the space of continuous functions vanishing at infinity, which is the completion of $C_c(\mathbb R_*).$ 
As before, we endow $C_0(\mathbb R_*) $ with the supremum norm.  

We denote by $\mathcal M_+(I) $ the space of nonnegative Radon measures on $I$. 
In the following, justified by the Riesz-Markov-Kakutani representation theorem, we frequently identify $\mathcal M_+(I)$ with the set of the positive linear functionals on $C_c(\mathbb R_*).$
We adopt the notation $\mathcal M_{+,b}(I):=\{ \mu \in \mathcal M_+(I) : \mu(I) <\infty \}$ and endow this space with the total variation norm, that we denote by $\| \cdot \|$. 
Since we consider positive measures, we can easily compute the total variation norm of any measure $\mu \in \mathcal M_+(I) $, indeed $\| \mu  \|= \mu(I).$
We will sometimes endow $\mathcal M_{+,b}(\mathbb R_*)$ with the \eu{weak$-*$} topology generated by the functionals $\langle \varphi , \mu \rangle = \int_I \varphi(x) \mu(dx) $.

We denote by $C(I, \mathcal M_{+,b}(\mathbb R_*) ) $ the space of continuous functions from the compact set $I \subset \mathbb R_+$ to the space of Radon bounded measures. We endow this space with the norm 
$\| f \|_I:= \sup_{t \in I } \| f(t, \cdot ) \|.$

Notice that $\|f \|_I < \infty $ because $I$ is a compact set and $f(t, \cdot)$ is a Radon bounded measure. 
Assume $Y$ is a normed space and $S \subset Y$. 
We use the notation $C^1([0,T];S;Y)$ for the collection of maps $f : [0, T]  \rightarrow S$ such that $f$ is continuous and there
is $\dot{f} \in C([0, T]; Y )$ for which the Fr\'echet derivative of $f$ at any point $t \in [0,T] $ is given by $\dot{f}$.
We also drop the normed space $Y$ from the notation if it is clear from the context, in
particular, if $S = \mathcal M_{+,b}(I)$ and $Y = \mathcal M_+(I)$ or $Y = S$.
Clearly, if $f \in C^1([0, T]; S; Y )$, the function $\dot{f}$ is unique and it can be found by requiring
that for all $t \in (0,1)$
\begin{equation*}
\lim_{\varepsilon \rightarrow 0} \frac{\| f(t+\varepsilon)-f(t)- \varepsilon \dot{f}(t)\|_Y}{|\varepsilon|}=0
\end{equation*}
and then taking the left and right limits to obtain the values $\dot{f}(0)$ and $\dot{f}(T)$.
To keep the notation lighter, in some of the proofs, we will denote by $C$ or $c$ a constant that may be different from line to line.

We denote by $\hat{f}$ the Fourier transform of $f: \mathbb R \rightarrow \mathbb R$ defined by
\[
\eu{\hat{f}(y )}:=\frac{1}{(2\pi)^{1/2}} \int_{\mathbb R} e^{- i x y} f(x) dx. 
\]
We define the Sobolev spaces of fractional order $s$ (negative or positive) as
\[
H^s(\mathbb R):=\{ f \in \mathcal S'(\mathbb R):  \| f \|_{H^{s}(\mathbb R) } < \infty \} 
\]
where, $\mathcal S (\mathbb R)$ \eu{ is the space of
infinitely differentiable and rapidly decreasing functions} and $\mathcal S' (\mathbb R)$ \eu{is its dual}, we refer to Definition 14.6 in \cite{duistermaatdistributions} \eu{for a precise definition}, and the norm $\| \cdot \|_{H^{s}(\mathbb R) }$ is given by
\[
\| f \|_{H^{s}(\mathbb R) }^2 := \int_{\mathbb R }  (1+|x |^2)^{s} |\hat{f}(x)|^2 dx. 
\]
Let $\Omega $ be an open set and let $s \in \mathbb R$. 
The space $H^s(\Omega) $ is the set of the restricted functions $f_{| \Omega}$ with $f: \mathbb R \rightarrow \mathbb R$  belonging to $H^s(\mathbb R)$. It is a Banach space equipped with the norm 
\[
\|f \|_{H^s(\Omega)}:= \inf \{ g \in H^s(\Omega) : g_{|\Omega} =f  \}. 
\] 
This definition is the same as in \cite{mclean2000strongly}. 

Finally, we will use the notation $f \sim g$ as $x \rightarrow x_0 $ to indicate the asymptotic equivalence between the function $f$ and the function $g$, i.e. $\lim_{x \rightarrow x_0} \frac{f(x)}{g(x)} = 1$. Instead, we will use the notation $f \approx g $ to say \eu{that there exists a constant $M>0$ such that   $\frac{1}{M}\leq \frac{f}{g} \leq  M $}. 

The organization of the paper is the following. 
In Section \ref{sec:heuristic} we discuss the heuristic justification to study the self-similar solutions constructed in this paper. 
In Section \ref{sec:setting}, we explain in detail the setting in which we work, we present the definition of self-similar profile and state the main results of the paper.
In Section \ref{sec:existence section} we prove the existence of a self-similar profile, while in Section \ref{sec:moments} and Section \ref{sec:regularity ss} we prove its properties. Finally, in Section \ref{sec:coag with influx}, we prove that the self-similar solution $f_s$ defined by \eqref{ss profile}, solves equation \eqref{intro: source with flux}.   

\section{Heuristic argument} \label{sec:heuristic}
\subsection{Scaling parameters} \label{subsec:par}
In this subsection, we explain how the exponents in \eqref{ss profile} are computed.
Since we are only considering non-gelling kernels, the change of mass in the system is only due to the contribution of the source. Indeed, multiplying equation \eqref{eq:coagulation with source} by $x$ and integrating in $x$ from $0$ to $\infty$ we obtain formally
\begin{equation}\label{mass time dep}
\frac{d }{dt } \int_0^\infty x f(t,x) dx = \int_0^\infty  x \eta (dx)< \infty .
\end{equation}

As a consequence, using the change of variables 
\begin{equation}\label{time dep self-similar profile}
f(t,x) = \frac{1}{t^\alpha }\phi\left(\ln t, \frac{x}{t^\beta}\right), \quad \xi = \frac{x}{t^\beta}, \quad \tau = \ln t
\end{equation}
 and considering the initial condition $f(0,x) = 0$, motivated by the fact that the total mass in the system is proportional to time, we conclude that
\[
t^{2\beta-\alpha} \int_0^\infty \xi \phi\left(\tau, \xi \right) d\xi   =\int_0^\infty \frac{x}{t^\alpha} \phi\left(\eu{\ln t}, \frac{x}{t^\beta}\right) dx=\int_0^\infty  x f(t,x) dx = t \int_0^\infty  x \eta (dx), 
\] 
which implies that $2\beta-\alpha=1.$
Hence the mass of the source is equal to the mass of $\phi$.

Moreover, using \eqref{time dep self-similar profile} in the coagulation term \eqref{eq:coag_term} and using \eqref{eq:kernel} yields the scaling
\begin{align*}
\mathbb K [f](t,x) \sim t^{\beta-2\alpha +\beta \gamma} \mathbb K [\phi](\tau, \xi). 
\end{align*}

On the other hand, the fact that
\begin{align*}
\partial_t\left(t^ {-\alpha} \phi \left(\ln t, \frac{x}{ t^{\beta}}, \right)\right) = - t^{-\alpha-1} \left(\alpha \phi\left( \ln t , \frac{x}{ t^{\beta}}\right) +\beta \xi \partial_\xi \phi\left(\ln t, \frac{x}{ t^{\beta}} \right) -\partial_\tau \phi\left(\ln t, \frac{x}{ t^{\beta}}\right) \right)
\end{align*}
implies that $-1-\alpha = \beta-2\alpha +\beta \gamma$. 
Recalling
the condition $ 2\beta-\alpha=1 $
we conclude that 
\begin{equation}\label{eq:scaling}
\beta= \frac{2}{1-\gamma}, \quad  \alpha= \frac{3+\gamma}{1-\gamma}. 
\end{equation}
Notice that this scaling is in agreement with \cite{davies1999smoluchowski}.

We now conclude this Section by noticing that equation \eqref{eq:coagulation with source} has a scaling-invariance property. Indeed if $f$ solves \eqref{eq:coagulation with source} with a source of mass $\int_0^\infty x \eta (dx) = M_\eta $, then $\tilde f(t,y)= (M_\eta)^{\gamma/(1-\gamma)}f(t, M_\eta^{1/(1-\gamma)}  y )$ solves \eqref{eq:coagulation with source} with a source of mass $1.$ Without loss of generality we therefore assume from now on that the source $\eta $ has mass equal to $1.$

\subsection{Formal derivation of the equation for the self-similar profile}

In this Section, we explain why we expect the solution of equation \eqref{eq:coagulation with source} to approach, as time tends to infinity, a self-similar solution for $x\gg 1 $ and a steady state for $x \approx 1$, following an argument inspired by the one in \cite{davies1999smoluchowski}.

\eu{
When $|\gamma +2 \lambda |< 1$, holds we know, from \cite{ferreira2019stationary}, that a stationary solution $\overline{f}$ exists and we expect, in view of the numerical simulations in \cite{davies1999smoluchowski}, that
\begin{equation} \label{outer behaviour}
f(t,x) \rightarrow \overline{f} (x) \text{ as } t  \rightarrow \infty \text{ for $x$ of order 1}.
\end{equation}
}

The fact that $J_{\overline{f}} (x) \rightarrow 1 $  as $x \rightarrow \infty $ (proven in \cite{ferreira2019stationary}) and that the simplest solution of $J_{\phi} (x)=1 $ is $\phi(x) = c_0 x^{-\frac{3+\gamma}{2}}$, with $c_0=\left(  \int_0^1 \int_1^\infty K(y,z) z^{-\frac{\gamma+3}{2}} y^{-\frac{\gamma+1}{2}} dy dz  \right)^{- 1/2}$, suggests that $\overline{f} (x) \sim c_0 x^{-\frac{3+\gamma}{2}} $ as $x \rightarrow \infty.$ This yields the following matching condition: 
\begin{equation}\label{matching condition} 
f(t,x) \sim c_0 x^{-\frac{3+\gamma}{2}}
\end{equation} 
 for $1 \ll x \ll t^{ \frac{2}{1-\gamma}} $ or equivalently  $1 \ll x \ll e^{\frac{2}{1-\gamma} \tau} .$ 

We now describe the asymptotic behaviour of $f(t, x) $ in the self-similar region $x \approx  t^{\frac{2}{1-\gamma}}$. Using the self-similar change of variables \eqref{time dep self-similar profile} in \eqref{eq:coagulation with source}, we deduce that $\phi$ satisfies the following transport-coagulation equation with source
\begin{equation} \label{time dep eq with time-source intro} 
 \partial_\tau \phi (\tau, \xi) = \frac{3+\gamma}{1-\gamma} \phi(\tau, \xi) + \frac{2}{1-\gamma}  \xi \partial_\xi \phi(\tau, \xi) + \mathbb K [\phi] (\tau, \xi) + e^{\frac{4}{1-\gamma} \tau }\eta\left(\xi e^{\frac{2}{1-\gamma} \tau}\right)
\end{equation}
for $\tau >0$ and $ \xi >0. $

Since $\int_0^\infty x \eta(dx) =1$, then in the region $\xi \approx 1 $ the term $e^{\frac{4}{1-\gamma} \tau }\eta\left(\xi e^{\frac{2}{1-\gamma} \tau}\right) $ tends to zero in the sense of measures as $\tau \rightarrow \infty $.

 We make the self-similar ansatz, i.e we assume that there exists a self-similar profile $\phi_s$ such that
\begin{equation} \label{phi tends to phis}
\phi(\tau,\xi) \rightarrow \phi_s(\xi) \text{ as } \tau \rightarrow \infty
\end{equation}
and conclude that $\phi_s$ solves
\begin{equation}\label{coag eq self sim} 
0=\frac{3+\gamma}{1-\gamma} \phi_s( \xi) + \frac{2}{1-\gamma}  \xi \partial_\xi \phi_s( \xi) + \mathbb K [\phi_s] (\xi) \quad  \text{ for }\xi>0.
\end{equation}
By the matching condition \eqref{matching condition}, we know that 
\begin{equation} \label{boundary}
\phi_s \sim c_0 \xi^{-\frac{3+\gamma}{2}} \text{ as } \xi \rightarrow 0. 
\end{equation} 

The fact that $\int_0^\infty \xi \phi_s(\xi) d\xi<\infty $ and the shape of the equation suggest that $\phi_s$ decays at least exponentially (see also the statement of Theorem \ref{thm: existence of a ss sol}), later we will justify the precise ansatz  
\begin{equation}\label{expo}
\phi_s(\xi) \sim c e^{-L \xi} \xi^{-\gamma} \text{ as } \xi \rightarrow \infty, 
\end{equation} 
for some $L>0$. Rigorous upper estimates for $\phi_s$ supporting this asymptotic behaviour will be also derived in Theorem \ref{thm: existence of a ss sol}. 
The matching condition \eqref{boundary}, together with \eqref{expo} then implies
\begin{equation}\label{flux boundary cond}
\lim_{\xi \rightarrow 0} J_{\phi_s}(\xi)=1 . 
\end{equation}
We conclude that the self-similar profile satisfies equation \eqref{coag eq self sim} with the boundary condition \eqref{flux boundary cond}. 

We now derive a relation between the flux coming from the origin, \eqref{flux boundary cond}, and $\int_0^\infty \xi \phi_s(d\xi). $
Multiplying \eqref{coag eq self sim} by $\xi $ and noticing that $ \xi \mathbb K[\phi_s] (\xi)= - \partial_\xi J_{\phi_s} (\xi) $ we obtain
\begin{align*}
0=-  \xi  \phi_s(\xi)  + \partial_\xi \left( \frac{2}{1-\gamma}\xi^2 \phi_s(\xi)  -  J_{\phi_s} (\xi) \right) . 
\end{align*}
Integrating from $0$ to infinity we deduce that
\begin{align}\label{eq:ss con limiti}
 \int_0^\infty \xi  \phi_s(\xi) d\xi = \lim_{\xi \rightarrow \infty} \left( \frac{2}{1-\gamma}\xi^2 \phi_s(\xi)  -  J_{\phi_s} (\xi) \right) -
 \lim_{\xi \rightarrow 0} \left( \frac{2}{1-\gamma}\xi^2 \phi_s(\xi)  -  J_{\phi_s}(\xi) \right). 
\end{align}

Thanks to \eqref{boundary}, and the assumption $\gamma <1$ we deduce that $ \lim_{\xi \rightarrow 0} \xi^2 \phi_s(\xi) =0$. 
Moreover, thanks to \eqref{expo} we deduce that $\lim_{\xi \rightarrow \infty} J_{\phi_s}(\xi)=0$ and $ \lim_{\xi \rightarrow \infty} \xi^2 \phi_s(\xi) =0$. Therefore, combining \eqref{eq:ss con limiti} with  \eqref{flux boundary cond} we obtain that 
\[
\int_0^\infty \xi \phi_s(\xi) d\xi=1. 
\]
Using the self-similar change of variables \eqref{time dep self-similar profile} we deduce that the self-similar solution $f_s$ satisfies 
\[
\int_0^\infty x f_s(t,x) dx =t, 
\]
that is consistent with \eqref{mass time dep}.

Finally we justify \eqref{expo}. 
To this end we substitute in equation \eqref{coag eq self sim} the ansatz $\phi_s(\xi) \sim c e^{-L \xi} \xi^{a} $ as $\xi \rightarrow \infty$ and formally estimate the behaviour at infinity of all the terms in \eqref{coag eq self sim}, to deduce that $a=- \gamma. $
We start by considering the coagulation term 
\begin{align*}
&\mathbb K[e^{-L \xi} \xi^a ] = \frac{c^2}{2} e^{-L \xi} \xi^a \int_0^\xi K(y,\xi-y) y^a (\xi-y)^a dy - c^2 e^{-L \xi } \xi^a \int_0^\infty K(\xi,y)  e^{-L y} y^a  dy  \\
&=c^2 e^{-L \xi} \xi^{\gamma+1+2a}\left(  \frac{1}{2}\int_0^1 K(y,1-y) y^a (1-y)^a dy - \int_0^\infty K(1,y)  e^{-L \xi y} y^a  dy \right)  \\
& \sim  \frac{c^2}{2} e^{-L \xi} \xi^{\gamma+1+2a} \int_0^1 K(y,1-y) y^a (1-y)^a dy\\
\end{align*} 
and therefore 
\begin{equation} \label{behaviour of K}
\mathbb K[e^{-L \xi} \xi^a ] \sim c^2 \left(\int_0^1 K(y,1-y) y^a (1-y)^a dy\right) e^{-L\xi } \xi^{\gamma +1+ 2a} \quad { as } \quad  \xi \rightarrow \infty. 
\end{equation} 
On the other hand, 
\begin{equation}\label{behaviour of transport}
\frac{3+\gamma}{1-\gamma} \phi_s(\xi) + \frac{2}{1-\gamma} \xi \phi'_s(\xi) \sim - \frac{2cL}{1-\gamma} e^{-L \xi } \xi^{1+a} \quad { as } \quad \xi \rightarrow \infty. 
\end{equation}
Combining \eqref{behaviour of K} and \eqref{behaviour of transport} we deduce that $\phi_s (\xi) \sim c e^{-L \xi } \xi^{- \gamma }$ and \[
c=\frac{2L }{1-\gamma}\left(\int_0^1 K(y,1-y) y^{-\gamma} (1-y)^{-\gamma} dy \right)^{-1}. \]

The power law behaviour near the origin and the exponential behaviour at infinity are supported by the numerical simulations in \cite{davies1999smoluchowski}. 
The estimates \eqref{eq:estimate_Phi},  \eqref{eq:estimate_Phi above} show that the mean of $\phi_s$ behaves as $x^{-(\gamma+3)/2}$ near the origin and the inequality \eqref{point estimate} shows that $\phi_s $ decays at least exponentially for large sizes.

\subsection{Longtime asymptotics}
In this Section we present a different argument justifying the self-similar behaviour of solutions of \eqref{eq:coagulation with source}.
We show that, if the self-similar profile can be uniquely identified as the solution of a coagulation equation with constant flux coming from the origin (equation \eqref{intro: source with flux}), then the self-similar solution describes the longtime behaviour of the solutions to the coagulation equation with source \eqref{eq:coagulation with source}.
Since the investigation of the uniqueness of the coagulation equation with constant flux coming from the origin is still an open problem, the following argument represents only a formal heuristic motivation for the study of self-similar solutions of equation \eqref{eq:coagulation with source}.

Nevertheless, the self-similar ansatz \eqref{ss profile} is corroborated, at least for some of the kernels considered here, by the numerical simulations and heuristic explanations in \cite{davies1999smoluchowski} and by \cite{lehtinen2001self} from the point of view of physics.

Let us consider a solution $f$ to \eqref{eq:coagulation with source} with initial condition $f_0$ such that $f_0(y) < c  y^{\omega}$ with $\omega < - \frac{\gamma+3 }{2}$ and $c>0$, and a positive constant $R$.
Since we are interested in the longtime behaviour and we are assuming $\gamma <1$,  we consider the following change of variables
\begin{equation*}
x = \xi R,\quad t = R^{\frac{1-\gamma}{2}}, s \quad s>0.
\end{equation*}
The scaling in size balances the scaling in time in such a way that the function $F_R$ defined by 
\begin{equation} \label{scaling FR}
 F_R(s,\xi) :=  R^{(\gamma+3)/2} f(R\xi, R^{\frac{1-\gamma}{2}} s) 
\end{equation}
satisfies the coagulation equation
\begin{equation} \label{eq:F_R}
\partial_s F_R(s,\xi) = \mathbb{K}[F_R](s,\xi) +  \eta_R(\xi)
\end{equation}
with the source 
\[
 \eta_R(\xi) := R^2\eta(R\xi)
\]
and the initial condition $F_R(0,\xi) = R^{\frac{\gamma+3}{2}} f_0(R\xi)$.

Integrating equation \eqref{eq:F_R} against a test function $\varphi \in C([0,T] \times \mathbb R_+) $ we obtain that
\begin{align} \label{eq:F_R weak}
& \int_{\mathbb R_*}  \xi \varphi(t, \xi)  F_R(t,\xi) d\xi = \int_{\mathbb R_*}  \xi \varphi(0, \xi)  F_R(0,\xi) d\xi +\int_0^t \int_{\mathbb R_*}  \xi \partial_s \varphi(s, \xi)  F_R(s,\xi) d\xi ds \\
&+ \int_0^t \int_{\mathbb R_*} \int_{\mathbb R_*}\frac{ K(y, \xi)}{2} \left[\varphi\left( s, \xi + y \right) (\xi +y) - \xi \varphi(s, \xi) -y \varphi(s,y) \right] \cdot  \nonumber \\
&\cdot F_R (s,\xi)  F_R (s,y) d\xi dy ds  +\int_0^t  \int_{\mathbb R_*} \xi \varphi(s,\xi) \eta_R(\xi) d\xi ds \nonumber
\end{align}

Since the source $\eta_R$ decays fast enough and 
$$\int_0^\infty \xi \eta_R(\xi) d\xi = 1$$
we infer that 
\[
\lim_{R \rightarrow \infty} \int_{\mathbb R_*} \xi \varphi(s,\xi) \eta_R(\xi) d\xi =\lim_{R \rightarrow \infty} \int_{\mathbb R_*} \varphi\left(s,\frac{y}{R}\right) y \eta(y) dy= \varphi(s,0)
\]
and $y \eta_R(y) \rightarrow \delta_{0} (y)$ as $R \rightarrow \infty.$

Assuming that the solution $F_R$ is unique and that  the limit of $F_R$ when $R\to \infty$ exists,
passing to the limit as $R \rightarrow \infty $ in equation \eqref{eq:F_R}, we deduce that $F$, the limit of $F_R$, solves the coagulation equation with constant flux coming from the origin in the sense of Definition \ref{def:coag eq flu origin} (Section \ref{sec:setting}). 

 We will prove in Theorem \ref{thm:asymptotic} that the function $ s^{- \frac{\gamma+3}{1-\gamma}} \phi_s( \xi s^{-\frac{2}{1-\gamma}})$, with $\phi_s$ solving \eqref{coag eq self sim} with the boundary condition \eqref{flux boundary cond} satisfies equation \eqref{eq:F} for every test function $\varphi \in C^1([0,T], C(\mathbb R_+))$. 
Assuming that \eqref{eq:F} has a unique solution we conclude that
\begin{equation}\label{F as limit}
F(s, \xi)=\lim_{R \rightarrow \infty }F_R(s,\xi)= s^{-\frac{\gamma+3}{1-\gamma}} \phi_s(\xi s^{-\frac{2}{1-\gamma}}). 
\end{equation}
Choosing $R= e^{\frac{2}{1-\gamma}\tau}$ and $s=1$ in \eqref{scaling FR}, we conclude that 
\begin{equation*}
 F_{e^{\frac{2\tau}{1-\gamma}} }(1,\xi) = e^{\frac{3+\gamma}{1-\gamma} \tau} f(e^\tau, \xi e^{\frac{2\tau}{1-\gamma}}), 
\end{equation*}
where $f$ is a solution of equation \eqref{eq:coagulation with source}. 

From \eqref{F as limit} and the fact that $F(1,\xi) = \phi_s(\xi)$, we deduce that, as $\tau $ tends to infinity, 
\[
e^{\frac{3+\gamma}{1-\gamma} \tau} f(e^\tau, \xi e^{\frac{2}{1-\gamma} \tau}) \rightarrow \phi_s(\xi). 
\]
This implies the self-similar ansatz \eqref{ss profile}.

\section{Setting and main results} \label{sec:setting}
Given $\gamma, \lambda \in \mathbb R$ and $c_1, c_2>0$, a continuous symmetric function $K: \mathbb R_* \times \mathbb R_*\rightarrow \mathbb R_+$, is a coagulation kernel of parameters $\gamma $ and $\lambda$ if it satisfies the following inequalities
\begin{equation} \label{kernel gamma lambda}
c_1 w(x,y) \leq  K(x,y) \leq  c_2 w(x,y ) \quad  \text{ for all } x, y \in \mathbb R_*,
\end{equation} 
where
 \begin{equation*}
 w(x,y)= \frac{x^{\gamma+\lambda}}{y^\lambda}+ \frac{y^{\gamma+\lambda}}{x^\lambda}.
\end{equation*}
We assume that the coagulation kernel is homogeneous, with homogeneity $\gamma$, that is, for any  $b>0$ it satisfies 
\[
K(b x,b y)= b^\gamma K(x,y) \quad \text{ for all } x,y \in \mathbb R_*. 
\]

We now give the definition of self-similar profile for equation \eqref{intro: source with flux}. 
\begin{definition}\label{def:ss_sol}
Let $K$ be a homogeneous symmetric coagulation kernel $K \in C(\mathbb R_* \times \mathbb R_*)$ satisfying \eqref{kernel gamma lambda} with homogeneity $\gamma < 1$.
A self-similar profile of equation \eqref{intro: source with flux} with respect to the kernel $K$, is a measurable function $\phi \geq 0$ with 
\[
\int_{\mathbb R_*} x \phi(x) dx =1 \quad \text{ and } \quad J_\phi \in L^\infty_{loc}(\mathbb R_*), 
\]
where $J_\phi$ is defined by \eqref{flux}, and it satisfies
\begin{eqnarray}\label{eq:ss_eq}
J_\phi(z)=1
-  \int_0^z x\phi(x) dx  + \frac{2}{1-\gamma}  z^2 \phi (z) \quad  a.e.\  z>0. 
\end{eqnarray}
\end{definition}

\begin{theorem}[Existence]\label{thm: existence of a ss sol}
Let $K $ be a homogeneous symmetric coagulation kernel $K \in C(\mathbb R_* \times \mathbb R_*)$ satisfying \eqref{kernel gamma lambda}, with homogeneity $\gamma <1 $ and $ | \gamma+ 2 \lambda | <1.$
Then, there exists a self-similar profile $\phi$ as in Definition \ref{def:ss_sol}. 
Moreover, there exist positive constants $C$ and $b_1$ with $b_1< 1$ such that 
\begin{equation}\label{eq:estimate_Phi}
\frac{1}{z} \int_{ b_1z}^z  \phi(x) dx \leq \frac{C}{z^{(3+\gamma)/2}}\quad \text{ for any } z >0 
\end{equation} 
 and there exist two constants $b_2 \in (0,1)$ and $c>0$, depending on the parameters of the kernel $\gamma, \lambda $ as well as on $c_1, c_2$ in \eqref{kernel gamma lambda}, such that 
\begin{equation}\label{eq:estimate_Phi above}
 \frac{1}{z} \int_{b_2z}^z  \phi(x)dx \geq  \frac{c}{z^{(3+\gamma)/2}},\quad z \in (0,1]. 
\end{equation} 
There exists 
 a positive constant $L$ such that  
\begin{equation}\label{exp bound}
\int_1^\infty e^{Lx} \phi(x) dx < \infty
\end{equation}
and a positive constant $\rho$ such that 
\begin{equation}\label{point estimate}
\limsup_{z \rightarrow \infty}\phi(z) e^{\rho z } < \infty. 
\end{equation}
\end{theorem}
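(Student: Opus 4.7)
The plan is to construct the self-similar profile $\phi$ as the weak-$*$ limit of a sequence of approximating profiles $\phi_\varepsilon$, each obtained as a stationary solution to a regularized coagulation equation with source $\eta_\varepsilon$ concentrated near the origin and normalized by $\int_0^\infty x\eta_\varepsilon(dx) = 1$, with $x\eta_\varepsilon \rightharpoonup \delta_0$ weakly-$*$. Existence of $\phi_\varepsilon$ is obtained via a fixed-point argument in the spirit of the stationary construction in \cite{ferreira2019stationary}; the transport term $\tfrac{2}{1-\gamma}\xi\partial_\xi\phi_\varepsilon$ in the self-similar equation \eqref{coag eq self sim} is handled as a lower-order perturbation of the coagulation operator, and enters the identity \eqref{eq:ss_eq} through the contribution $\tfrac{2}{1-\gamma}z^2\phi(z)$ that arises when integrating the equation against $x\mathbf{1}_{(0,z)}$.

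The technical heart of the argument is a family of $\varepsilon$-uniform a priori estimates. The mass normalization is built into the construction, yielding $\int_0^\infty x\phi_\varepsilon\,dx = 1$. The key estimate \eqref{eq:estimate_Phi} comes from combining the upper bound $J_{\phi_\varepsilon}(z) \leq C$, which follows from the flux identity and the mass bound, with the lower bound
\begin{equation*}
J_{\phi_\varepsilon}(z) \;\geq\; c\, z^{\gamma+3}\Bigl(\tfrac{1}{z}\int_{b_1 z}^z \phi_\varepsilon(x)\,dx\Bigr)^2,
\end{equation*}
obtained by restricting the integration in \eqref{flux} to pairs $x,y$ of comparable size in $(b_1 z, z)$ and invoking the lower kernel bound in \eqref{kernel gamma lambda}. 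This is the same mechanism supporting the existence result of \cite{ferreira2019stationary}, and it is precisely the condition $|\gamma+2\lambda|<1$ that ensures interactions between clusters of very different sizes are subdominant, so that contributions to $J_{\phi_\varepsilon}(z)$ from pairs outside $(b_1 z, z)^2$ do not overwhelm the averaged-density contribution uniformly in $\varepsilon$. For the exponential bound \eqref{exp bound}, I would multiply the equation by $e^{Lx}$ and integrate over $(1,\infty)$: the transport term, after integration by parts, produces a negative sink $-\tfrac{2L}{1-\gamma}\int_1^\infty x e^{Lx}\phi_\varepsilon\,dx$ that dominates the coagulation gain for $L$ small enough, with the gain controlled via the already-established estimate \eqref{eq:estimate_Phi}.

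With the uniform bounds in hand, I extract a subsequence $\phi_\varepsilon \rightharpoonup \phi$ weakly-$*$ and pass to the limit in the flux identity by dominated convergence, the small-$x$ upper bound and the exponential tail providing the required domination; the bounds \eqref{eq:estimate_Phi} and \eqref{exp bound} transfer to $\phi$ by lower semicontinuity. The lower bound \eqref{eq:estimate_Phi above} on $(0,1]$ follows from the limiting boundary condition $J_\phi(0^+)=1$ together with an upper estimate for the contributions to $J_\phi(z)$ coming from pairs $(x,y)$ outside $(b_2 z, z)^2$, obtained from the upper kernel bound and the exponential tail; what remains in $(b_2 z, z)^2$ then forces the averaged density to be at least of order $z^{-(\gamma+3)/2}$ for small $z$. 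Finally, the pointwise estimate \eqref{point estimate} is extracted from \eqref{exp bound} by reading \eqref{coag eq self sim} as an ODE along characteristics $\xi=e^\tau$, which converts the integral exponential bound into a pointwise one at the cost of a slightly smaller exponent $\rho<L$. The main obstacle I anticipate is producing the uniform estimates in a form robust enough to bridge both ends of the spectrum — the small-$z$ power-law regime and the large-$z$ exponential regime — in a way that survives the nonlinear passage to the limit $\varepsilon\to 0$ in the flux integral \eqref{flux}.
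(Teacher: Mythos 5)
Your high-level scheme — approximate $\phi$ by stationary profiles $\phi_\varepsilon$ with source $\eta_\varepsilon$, establish $\varepsilon$-uniform estimates, pass to the weak-$*$ limit in the flux identity — does match the paper's, and your derivation of \eqref{eq:estimate_Phi} from the upper bound $J_{\phi_\varepsilon}(z)\le 1+\tfrac{2}{1-\gamma}z^2\phi_\varepsilon(z)$ combined with the lower bound obtained by restricting the flux integral to pairs of comparable size is exactly the paper's mechanism. The lower bound \eqref{eq:estimate_Phi above} is also in the right spirit, though the quantitative input the paper actually uses is the partition of $J_\phi$ into regions $\Sigma_1,\Sigma_2,\Sigma_3$ according to $y/x$ and Lemma~\ref{ferreira lemma 6.1}, which makes $J_1,J_3$ uniformly small in $z$.

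There are, however, three genuine gaps. \textbf{(i)} The normalization $\int_0^\infty x\phi_\varepsilon\,dx=1$ is \emph{not} built into the construction. The paper deliberately cuts off the transport term near the origin, replacing it by $\tfrac{1}{\xi}\partial_\xi\bigl(\tfrac{2}{1-\gamma}\xi^2\Xi_\varepsilon(\xi)\phi_\varepsilon\bigr)$, so that the invariant set for the truncated flow is $\{\int x\phi\,dx\le 1\}$; consequently the approximants satisfy only $\int x\phi_\varepsilon\,dx\le 1$, and the equality for the limit $\phi$ is recovered at the very end from $J_\phi(z)\to 0$ and $z^2\phi(z)\to 0$ as $z\to\infty$, which already requires the exponential decay. \textbf{(ii)} Your argument for \eqref{exp bound} does not close: the coagulation gain $\iint e^{L(x+y)}xK(x,y)\phi(x)\phi(y)\,dy\,dx$ involves precisely the unknown large-$x$ tail of $\phi$, and \eqref{eq:estimate_Phi} provides no decay whatsoever for $x>1$, so it cannot "control the gain." The paper's actual mechanism (Proposition~\ref{prop:expo}) is a Fournier--Laurençot moment-generating-function argument: with $\Psi_a(L)=\int\frac{x^2}{\min\{x,a\}}e^{L\min\{x,a\}}\Phi(dx)$, Jensen's inequality yields $\Psi'_a(L)\le c(\gamma)^{1/(1-\gamma)}\Psi_a(L)^{(2-\gamma)/(1-\gamma)}$, an ODE that stays finite for $L$ below an explicit threshold; this in turn relies on Proposition~\ref{prop:moments} (finiteness of all polynomial moments) to control $\Psi_a(0)$. \textbf{(iii)} You cannot get \eqref{point estimate} by "reading \eqref{coag eq self sim} as an ODE along characteristics," because the coagulation operator feeds the large-$x$ behavior of $\phi$ nonlocally back into the right-hand side. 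The paper instead shows $J_\phi(z)\le ce^{-\rho z}$: after using \eqref{exp bound} to bound all but the self-referential piece of $J_\phi(z)$ by $ce^{-\rho z}$, the remaining contribution from $(x,y)$ with $x\in[z-1,z]$, $y\in(z-x,1]$ is bounded by $\varepsilon\sup_{x\in[z-1,z]}J_\phi(x)$ via \eqref{eq:estimate_Phi} and $\phi(z)\le\tfrac{1-\gamma}{2z^2}J_\phi(z)$; the resulting recursion $\tilde J_n\le\varepsilon\tilde J_{n-1}+\varepsilon\tilde J_n+ce^{-n\rho}$ for $\tilde J_n:=\sup_{[n-1,n]}J_\phi$ is then iterated, and \eqref{eq:ss_eq} converts the decay of $J_\phi$ into the pointwise decay of $\phi$.
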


\begin{theorem}[Regularity] \label{thm:regularity} 
Assume $K$, $\lambda$ and $\gamma$ to be as in the assumptions of Theorem \ref{thm: existence of a ss sol} and assume $\phi$ to be the self-similar profile whose existence is proven in Theorem \ref{thm: existence of a ss sol}.
Assume that for every $0< y \leq 1$ we have $K(\cdot,y) \in H^l  (\mathbb R_*)$ with
\begin{equation}\label{kernel derivatives}
 \sup_{  0< y\leq 1 } y^{- \min\{\gamma+\lambda, -\lambda\}} \left\| K(\cdot,y) \right\|_{H^l  (1/2,2)} < \infty. 
\end{equation}
Then the self-similar profile $\phi$ is such that $x^2 \phi(x) \in H^{l}(\mathbb R_*)$. 

If $l \geq 3/2$ then $\phi \in C^1(\mathbb R_*) $ and it satisfies
\begin{align}\label{self similar smooth} 
&\int_0^{x/2} \left[ K(x-y,y) \phi(x-y) -  K(x,y) \phi(x) \right]  \phi(y) dy + \int_{x/2}^\infty K(x,y) \phi(x) \phi(y) dy \\
&+ \frac{3+\gamma}{1-\gamma} \phi(x) + \frac{2}{1-\gamma} x \partial_x \phi(x) =0 \nonumber \quad  \forall x \in \mathbb R_*
\end{align}
with the boundary condition \eqref{flux boundary cond}. 
\end{theorem}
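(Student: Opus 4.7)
The plan is to turn the integral identity \eqref{eq:ss_eq} into a differential equation for $\psi(z):=z^2\phi(z)$ whose right-hand side is an integral expression in $\phi$ against $K$, then bootstrap Sobolev regularity via the smoothness assumption on the kernel, and finally recover the classical form of the equation from Sobolev embedding. First I would rewrite $J_\phi$ by Fubini and the symmetry of $K$ as a one-dimensional antiderivative, $J_\phi(z)=\int_0^z(F(x)-G(x))\,dx$ with
\[
F(z)=z\phi(z)\int_0^\infty K(z,y)\phi(y)\,dy,\qquad G(z)=\frac{z}{2}\int_0^z K(x,z-x)\phi(x)\phi(z-x)\,dx.
\]
Substituting this into \eqref{eq:ss_eq} shows that $\psi$ is absolutely continuous on $\mathbb R_*$ with $\psi'(z)=\frac{1-\gamma}{2}\bigl(z\phi(z)-z\mathbb K[\phi](z)\bigr)$. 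Using the averaged bound \eqref{eq:estimate_Phi} and the exponential decay \eqref{point estimate} from Theorem \ref{thm: existence of a ss sol}, together with $\gamma<1$ giving $(1-\gamma)/2>-1/2$, one obtains $\psi\in L^2(\mathbb R_*)$ as the base regularity.

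Next I would run a bootstrap. Suppose $\psi\in H^s(\mathbb R_*)$ with $s<l$. By the homogeneity $K(bz,by)=b^\gamma K(z,y)$, the local assumption \eqref{kernel derivatives}, weighted by $y^{-\min\{\gamma+\lambda,-\lambda\}}$, globalizes to an $H^l$ control of $K(\cdot,y)$ on the whole half-line. The integral transforms $\int_0^\infty K(z,y)\phi(y)\,dy$ and $\int_0^z K(x,z-x)\phi(x)\phi(z-x)\,dx$ then inherit $H^l$ regularity in $z$ thanks to the integrability of $\phi$ and its exponential decay, while the Sobolev product rule (valid since $l>1/2$) allows multiplication by the $H^s$ factor $\phi$ to remain in $H^s$. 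Since $\psi$ is the antiderivative of these expressions, it gains one Sobolev derivative, giving $\psi\in H^{s+\delta}$ for some $\delta>0$; iterating finitely many times reaches $\psi\in H^l(\mathbb R_*)$, i.e.\ $x^2\phi\in H^l$.

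For the second part, if $l\geq 3/2$ then Sobolev embedding on compact subsets of $\mathbb R_*$ gives $\psi\in C^0$ and hence $\phi\in C^0(\mathbb R_*)$. Combined with the continuity of $K$ and the decay and integrability of $\phi$, this makes $\mathbb K[\phi]$ continuous on $\mathbb R_*$, so the identity $\psi'(z)=\frac{1-\gamma}{2}(z\phi(z)-z\mathbb K[\phi](z))$ shows $\psi\in C^1(\mathbb R_*)$ and therefore $\phi\in C^1(\mathbb R_*)$. Differentiating \eqref{eq:ss_eq} classically and dividing by $z$ then yields the pointwise equation \eqref{self similar smooth}, while the boundary condition \eqref{flux boundary cond} follows by letting $z\to 0^+$ in \eqref{eq:ss_eq} using that $\int_0^z x\phi(x)\,dx$ and $z^2\phi(z)$ both vanish in the limit thanks to \eqref{eq:estimate_Phi}.

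The hard part will be the bootstrap, specifically controlling the interplay between the kernel's origin singularity (parametrized by $\gamma+\lambda$ and $-\lambda$) and the averaged bounds on $\phi$: one must globalize the local assumption \eqref{kernel derivatives} through the homogeneity of $K$ without losing regularity across the weight, and show that the Sobolev product of $\phi$ with the $H^l$ integral transforms actually remains close to $H^l$. A secondary difficulty is upgrading the merely averaged estimate from Theorem \ref{thm: existence of a ss sol} to the starting $L^2$ control of $\psi$, which may require exploiting the equation itself to gain pointwise integrability near the origin.
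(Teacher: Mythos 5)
Your proposal diverges from the paper's proof in a way that leaves a genuine gap, and I do not think the direct bootstrap you sketch goes through.

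The core difficulty that the paper's proof is built around is a cancellation structure that your argument never surfaces. Testing the weak identity \eqref{eq:ss_eq not AC} against $\varphi=\psi'$ with $\psi\in C_c^\infty(\mathbb R_*)$ and using Fubini on $J_\Phi$ produces, after integration by parts,
\[
\frac{2}{1-\gamma}\int_{\mathbb R_*} z^2\psi'(z)\phi(z)\,dz
=\int_{\mathbb R_*} x\phi(x)\psi(x)\,dx
+\iint_{\mathbb R_*^2} \Delta_y\psi(x)\,K(x,y)\,x\,\phi(x)\phi(y)\,dx\,dy ,
\]
where $\Delta_y\psi(x)=\psi(x+y)-\psi(x)$. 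The appearance of $\Delta_y\psi$ (and \emph{not} of $\psi$ or $\mathbb K[\phi]$ itself) is essential: the bound $\|\Delta_y\psi\|_{H^{-\beta}}\lesssim |y|^{\bar s}\|\psi\|_{H^{\bar s-\beta}}$ (Step 1 of the paper) supplies the extra factor $|y|^{\bar s}$ needed so that $y\mapsto y^{\min\{\gamma+\lambda,-\lambda\}+\bar s}\phi(y)$ is integrable near the origin under $|\gamma+2\lambda|<1$. Your proposal instead works directly with $\psi'=\frac{1-\gamma}{2}(z\phi-z\mathbb K[\phi])$ and claims the gain and loss integrals ``inherit $H^l$ regularity''; but that manipulation requires multiplying $\phi(z)$ by $\int_0^1 K(z,y)\phi(y)\,dy$ without any such cancellation, and the near-origin behaviour $\phi(y)\sim y^{-(3+\gamma)/2}$ combined with $K(z,y)\sim y^{\min\{\gamma+\lambda,-\lambda\}}$ does not give enough integrability to run the Sobolev product rule at the low regularities where the bootstrap must start. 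The paper's operator $T[\psi](x)=\int_0^1 K(x,y)\phi(y)\Delta_y\psi(x)\,dy$ and the dual estimate $\|T[\psi]\|_{H^{-\beta}(U)}\lesssim\|\psi\|_{H^{\bar s-\beta}(\mathbb R)}$ (Steps 2--3) exist precisely to exploit this cancellation; your argument does not replace them.

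Two further issues. First, your starting point $\psi\in L^2(\mathbb R_*)$ does not follow from the estimates available at this stage: \eqref{eq:estimate_Phi} is only an averaged ($L^1$-type) bound, and pointwise estimates such as \eqref{point estimate} are established asymptotically, not globally; the paper instead obtains a base regularity $\Theta=z^2\phi\in H^{1-\bar s}_{loc}$ directly from the dual pairing, with the gain per bootstrap step being $1-\bar s\in(0,1/2)$, not a full derivative. Second, writing $\psi'(z)=\frac{1-\gamma}{2}(z\phi(z)-z\mathbb K[\phi](z))$ as an a.e.\ identity presupposes that $J_\phi$ is differentiable with $J_\phi'=-z\mathbb K[\phi]$, but this is part of what regularity theory is meant to establish; the paper avoids this circularity by working throughout with the weak form \eqref{eq:ss_eq not AC} and only recovering \eqref{self similar smooth} at the end, once $l\geq 3/2$ gives $C^1$ smoothness via Sobolev embedding. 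Your last paragraph (recovering \eqref{self similar smooth} and \eqref{flux boundary cond} once $\phi\in C^1$ is known) is fine, but the route to $C^1$ needs the dual argument above.
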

\begin{remark}
Due to the homogeneity and the symmetry of the kernel, inequality \eqref{kernel derivatives} implies that for every $(a,b) \subset \mathbb R_+$ with $a>b>0$, we have
\[
 \sup_{  0< y\leq 1} y^{- \min\{\gamma+\lambda, -\lambda\}} \left\| K(\cdot,y) \right\|_{H^l ((a,b))} < \infty.
\]
\end{remark}

\begin{definition}[Coagulation equation with constant flux coming from the origin]\label{def:coag eq flu origin}
Let $K$ be a homogeneous symmetric coagulation kernel $K \in C(\mathbb R_* \times \mathbb R_*)$ satisfying \eqref{kernel gamma lambda} with homogeneity $\gamma < 1$, and let $T>0.$
We say that $F \in C([0,T], \mathcal M_+ (\mathbb R_+))$ is a solution of the coagulation equation with constant flux coming from the origin with initial condition $F(0, \cdot)=0$, if 
\begin{equation}\label{F bound}
\sup_{s \in [0,T]} \int_{(0,1]} \xi^{q} F(s,d\xi) < \infty  \quad \sup_{s \in [0,T]} \int_{[1, \infty)}\xi^{p} F(s,d\xi) < \infty
\end{equation}
for $q=\min\{1+\gamma+\lambda, 1-\lambda,1\}$, and $p =\max\{\gamma+\lambda, -\lambda \}$ and if it solves equation
\begin{align}\label{eq:F}
& \int_{\mathbb R_*} \xi \varphi(t, \xi)  F(t,d\xi) =  \int_0^t  \int_{\mathbb R_*}  \xi \partial_s \varphi(s, \xi)  F(s,d\xi) ds + \int_0^t \varphi(s, 0) ds \\
& + \frac{1}{2}\int_0^t \int_{\mathbb R_*} \int_{\mathbb R_*} K(\eta, \xi) \left[\left( \xi + \eta \right) \varphi(s, \xi +\eta) - \xi \varphi(s, \xi) - \eta \varphi(s, \eta) \right] F (s,d\xi)  F (s,d\eta )  ds \nonumber
\end{align}   
 for every test function $\varphi \in C^1([0,T], C^1_c(\mathbb R_+))$ and every $0 \leq t < T $. 
\end{definition}

\begin{theorem}\label{thm:asymptotic} 
Assume $K$, $\lambda$ and $\gamma$ to be as in the assumptions of Theorem \ref{thm: existence of a ss sol}. 
Let $\phi$ be a self-similar profile as in Definition \ref{def:ss_sol}.
Then $F(t, d\xi ):=t^{-\frac{\gamma+3}{1-\gamma}} \phi(\xi t^{-\frac{2}{1-\gamma}} )d\xi$ solves the coagulation equation with flux coming from the origin in the sense of Definition \ref{def:coag eq flu origin}. 
\end{theorem}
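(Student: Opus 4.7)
Let $\alpha := \tfrac{\gamma+3}{1-\gamma}$ and $\beta := \tfrac{2}{1-\gamma}$, so that $2\beta - \alpha = 1$ and $F(s, d\xi) = s^{-\alpha} \phi(\xi s^{-\beta})\, d\xi$. My plan is to reduce every integral that appears in \eqref{eq:F} to one taken against $\phi(y)\,dy$ via the self-similar change of variable $y = \xi s^{-\beta}$, and then verify the resulting identity using the flux form \eqref{eq:ss_eq} of the self-similar equation. First I would establish the moment bounds \eqref{F bound}: the change of variable gives
\[
\int_{(0,1]} \xi^q F(s, d\xi) = s^{\beta(1+q) - \alpha} \int_0^{s^{-\beta}} y^q \phi(y)\, dy, \qquad \int_{[1,\infty)} \xi^p F(s, d\xi) = s^{\beta(1+p) - \alpha} \int_{s^{-\beta}}^\infty y^p \phi(y)\, dy.
\]
Under the assumption $|\gamma + 2\lambda| < 1$ the exponent $\beta(1+q) - \alpha = (2q-1-\gamma)/(1-\gamma)$ is non-negative for $q = \min\{1+\gamma+\lambda,1-\lambda,1\}$, and similarly for $p$; the finiteness of the $y$-integrals follows from \eqref{eq:estimate_Phi} (controlling $\phi$ near the origin) together with the exponential decay \eqref{exp bound}--\eqref{point estimate}. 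The continuity $s \mapsto F(s,\cdot)$ and the initial condition $F(0, \cdot) = 0$ then follow from dominated convergence.

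Next, to check \eqref{eq:F} I set $L(s) := \int_{\mathbb R_*} \xi \varphi(s, \xi) F(s, d\xi)$ and $\tilde\varphi(s, y) := \varphi(s, y s^\beta)$; the change of variable gives $L(s) = s \int_0^\infty y \tilde\varphi(s, y) \phi(y)\, dy$. Differentiating in $s$ (legitimate by the moment bounds and the compact support of $\varphi(s, \cdot)$) and using $\partial_y \tilde\varphi(s, y) = s^\beta \partial_\xi \varphi(s, y s^\beta)$ yields
\[
L'(s) = \int_{\mathbb R_*} \xi\, \partial_s \varphi(s, \xi) F(s, d\xi) + \int_0^\infty y\, \tilde\varphi(s, y)\, \phi(y)\, dy + \beta \int_0^\infty y^2\, \partial_y \tilde\varphi(s, y)\, \phi(y)\, dy.
\]
The same substitution in the coagulation double integral together with the homogeneity $K(y s^\beta, z s^\beta) = s^{\beta \gamma} K(y, z)$ makes all powers of $s$ cancel (since $\beta(\gamma+3) = 2\alpha$), reducing that term to $\tfrac{1}{2} \int_0^\infty \!\!\int_0^\infty K(y,z)\bigl[(y+z)\tilde\varphi(s, y+z) - y \tilde\varphi(s, y) - z\tilde\varphi(s, z)\bigr]\phi(y)\phi(z)\, dy\, dz$. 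Matching $L'(s)$ with the integrand on the right-hand side of \eqref{eq:F} therefore reduces, writing $\psi := \tilde\varphi(s, \cdot) \in C_c^1(\mathbb R_+)$, to the single identity
\[
\int_0^\infty y \psi \phi\, dy + \beta \int_0^\infty y^2 \psi' \phi\, dy = \psi(0) + \tfrac{1}{2} \int_0^\infty \!\!\int_0^\infty K(y, z)\bigl[(y+z)\psi(y+z) - y\psi(y) - z\psi(z)\bigr]\phi(y)\phi(z)\, dy\, dz.
\]

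To establish this identity I would compute $I := \int_0^\infty \psi'(z) J_\phi(z)\, dz$ in two different ways. Multiplying \eqref{eq:ss_eq} by $\psi'$ and integrating termwise by parts (using $\psi \in C_c^1$, $\int_0^\infty z\phi(z)\,dz = 1$, and $\lim_{z \to 0} z^2 \phi(z) = 0$ from \eqref{eq:estimate_Phi}) gives $I = -\psi(0) + \int z \psi \phi\, dz + \beta \int z^2 \psi' \phi\, dz$. On the other hand, substituting the definition \eqref{flux} of $J_\phi$ into $I$ and exchanging the order of integration over the region $\{0 < x,\; 0 < y,\; x < z < x+y\}$ gives $I = \int\int x K(y, x) \phi(y)\phi(x) [\psi(x+y) - \psi(x)]\, dy\, dx$, which after symmetrization in $(x, y)$ using $K(x,y) = K(y,x)$ equals $\tfrac{1}{2} \int\int K(y,x)[(x+y)\psi(x+y) - x\psi(x) - y\psi(y)] \phi(y)\phi(x)\, dy\, dx$. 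Equating the two expressions for $I$ produces the required identity. Finally, $L(0) = 0$ from $L(s) = s \int y \tilde\varphi(s, y) \phi(y)\, dy$ with $\tilde\varphi$ bounded and $\int y \phi\, dy = 1$; integrating $L'$ from $0$ to $t$ then yields \eqref{eq:F}. The main obstacle is the Fubini step used to evaluate $I$, because the measure $K(y,x)\phi(y)\phi(x)\, dy\, dx$ is in general not integrable near $(0,0)$. Absolute convergence is rescued by the cancellation $\psi(x+y) - \psi(x) = O(y)$ for small $y$ (and its symmetric counterpart after exchanging $x \leftrightarrow y$), which combined with \eqref{kernel gamma lambda} and the bound $\phi(y) \lesssim y^{-(\gamma+3)/2}$ renders the singularity integrable precisely when $|\gamma + 2\lambda| < 1$.
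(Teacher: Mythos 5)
Your proof is correct and follows the same skeleton as the paper's: change to self-similar variables, reduce the weak coagulation equation with flux to a single identity in $\phi$, and verify that identity by combining the self-similar flux equation \eqref{eq:ss_eq} with the Fubini expansion $\int\psi'(z)J_\phi(z)\,dz = \tfrac12\iint K(x,y)\bigl[(x+y)\psi(x+y)-x\psi(x)-y\psi(y)\bigr]\phi(x)\phi(y)\,dx\,dy$. The genuine difference lies in how the boundary flux $\int_0^t\varphi(s,0)\,ds$ is recovered. The paper first proves \eqref{eq:F} for $\varphi\in C^1([0,T],C^1_c(\mathbb R_*))$ (vanishing near the origin), where no flux term appears since $\int_0^\infty\partial_z\varphi(s,zs^\beta)\,dz=0$; it then approximates a general $\varphi\in C^1([0,T],C^1_c(\mathbb R_+))$ by $\varphi_n(s,\xi)=\zeta(n\xi)\varphi(s,\xi)$ and extracts the flux term in the limit $n\to\infty$ from the boundary layer $n\zeta'(n\xi)J_{F(s,\cdot)}(\xi)$, exploiting $J_\phi(z)\to1$ as $z\to0^+$. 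You instead test the pointwise equation \eqref{eq:ss_eq} directly against $\psi'$ with $\psi\in C^1_c(\mathbb R_+)$, so the constant ``$1$'' on the right-hand side of \eqref{eq:ss_eq} produces $\int_0^\infty\psi'(z)\,dz=-\psi(0)$ in a single stroke. This dispenses with the cutoff lemma and is legitimate because $J_\phi\in L^1_{loc}([0,\infty))$ (this follows from \eqref{eq:ss_eq} and $\int_0^1z^2\phi<\infty$, which is a consequence of \eqref{eq:estimate_Phi} and Lemma \ref{ferreira lemma 2.10}); the underlying Fubini and moment estimates needed are the same in both versions. The shortcut is real but presentational rather than conceptual.

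Two small inaccuracies in your writeup, neither affecting the conclusion. First, you invoke $\lim_{z\to0}z^2\phi(z)=0$ ``from \eqref{eq:estimate_Phi}'', but that estimate controls only the average $\frac1z\int_{b_1z}^z\phi$, not $\phi$ pointwise; fortunately the limit is not actually used in your integration by parts — only $G(0)=0$ for $G(z)=\int_0^zx\phi\,dx$ and the compact support of $\psi$ are needed. Second, for the tail moment in \eqref{F bound} the exponent $\beta(1+p)-\alpha=(\gamma+2\lambda-1)/(1-\gamma)$ for $p=\gamma+\lambda$ is in fact \emph{negative} under $|\gamma+2\lambda|<1$, so it is not ``non-negative, similarly to $q$''; the supremum over $s\in[0,T]$ remains finite because the exponential decay \eqref{exp bound} makes $\int_{s^{-\beta}}^\infty y^p\phi(y)\,dy$ vanish faster than any power of $s^{-\beta}$ as $s\to0$.
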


\begin{remark}
We underline that the moment bounds \eqref{eq:estimate_Phi}, \eqref{eq:estimate_Phi above}, \eqref{exp bound}, estimate \eqref{point estimate} and the regularity properties in Theorem \ref{thm:regularity} are proven only for the self-similar profile whose existence is stated in Theorem \ref{thm: existence of a ss sol}. 
We do not know if these properties are more general, i.e. if they hold for all the self-similar profiles as in Definition \ref{def:ss_sol}. (Since we do not prove uniqueness of the self-similar profile the existence of many self-similar profiles is not excluded). 
\end{remark}

\section{Existence of a self-similar profile} \label{sec:existence section}
We briefly explain the technique we adopt to prove the existence of a solution of equation \eqref{coag eq self sim} with the boundary condition \eqref{flux boundary cond}.

The main idea is to approximate a solution of \eqref{coag eq self sim} with a sequence of solutions $\phi_{\varepsilon}$ of equation
\begin{equation} \label{eq with eps_source} 
0 = \frac{3+\gamma}{1-\gamma} \phi_{\varepsilon} (\xi) +\frac{2}{1-\gamma} \xi \partial_\xi \phi_\varepsilon (\xi) + \mathbb K [\phi_\varepsilon] ( \xi) + \eta_\varepsilon(\xi) 
\end{equation} 
where $\eta_\varepsilon$ is a smooth function with support $[\varepsilon,2 \varepsilon ]$ and such that 
\[
\int_{\mathbb R_*} x \eta_\varepsilon (x) dx = \int_{\mathbb R_*}  y \eta (dy)=1. 
\] 

To prove the existence of a solution of \eqref{eq with eps_source} we follow an approach which is extensively used in the analysis of kinetic equations, see for instance \cite{escobedo2005self} and \cite{gamba2004boltzmann}. 
We first prove the existence of solutions of \eqref{eq with eps_source} by considering the corresponding truncated time dependent evolution problem: 
\begin{equation} \label{time dep eq with eps_source} 
 \partial_t \phi_\varepsilon (\tau, \xi) = -  \phi_\varepsilon(\tau, \xi) + \frac{2}{1-\gamma}  \frac{1}{\xi} \partial_\xi\left(  \xi^2 \Xi_\varepsilon(\xi) \phi_\varepsilon(\tau, \xi)\right)  + \mathbb K_{a,R} [\phi_\varepsilon] (\tau, \xi) + \eta_\varepsilon(\xi) 
\end{equation} 
where $\Xi_\varepsilon $ is a smooth monotone function $\Xi_\varepsilon: \mathbb R_* \rightarrow \mathbb R_+$ such that $\Xi_\varepsilon (x) =1$ if $x \geq 2 \varepsilon$ and $\Xi_\varepsilon(x)=0$ if $x \leq \varepsilon$, while  $\mathbb K_{R,a}$ is the truncated coagulation operator defined by
\begin{align*}
\mathbb K_{R,a}[f](\tau,\xi)&:=\frac{\zeta_{R}(\xi)}{2} \int_0^\xi K_a(\xi-z,z) f(t,\xi-z)f(t,z) dz \\
&- \int_0^\infty K_a(\xi,z) f(t,\xi) f(t,z) dz,
\end{align*}
where $K_a$ is a kernel bounded from above by $a$ and from below by $1/a$ and $\zeta_R $ is a truncation function of parameter $R>0$, i.e. it is a smooth function $\zeta_{R}: \mathbb R_* \rightarrow \mathbb R_+$ such that $\zeta_{R}(x) =1$ if $x \leq R$ while $\zeta_{R}(x) =0$ if $x \geq2 R$. 
The specific truncation in the growth term of \eqref{time dep eq with eps_source} has been chosen in order to ensure that the set $\{\phi:  \int_0^\infty \xi \phi(\xi) d\xi \leq 1\} $ is invariant under the evolution equation corresponding to \eqref{time dep eq with eps_source}. 

In Section \ref{sec:trunc existence} we prove the existence of a weak solution to \eqref{time dep eq with eps_source}. 
In Section \ref{sec:steady state} we prove, using Tychonoff fixed point theorem, the existence of a stationary weak solution of equation \eqref{time dep eq with eps_source} 
\begin{equation} \label{eq with eps_source trunc} 
0 =  -  \phi_\varepsilon(\xi) + \frac{2}{1-\gamma}  \frac{1}{\xi} \partial_\xi\left(  \xi^2 \Xi_\varepsilon(\xi) \phi_\varepsilon( \xi)\right)  + \mathbb K_{a,R} [\phi_\varepsilon] (\xi) + \eta_\varepsilon(\xi),
\end{equation} 
and in Section \ref{sec:properties steady state} we study the properties of $\phi_\varepsilon$. 
In Section \ref{sec:removal of the truncation} we show that the solutions of \eqref{eq with eps_source trunc} approximate a self-similar profile in the sense of Definition \ref{def:ss_sol} as $R\rightarrow \infty, a \rightarrow \infty, \varepsilon \rightarrow 0$. 

\subsection{Truncated time dependent coagulation equation with source written in self-similar variables} \label{sec:trunc existence}
We introduce now some terminology which will enable us to define the truncated equation. 
As we will see in the proof of Proposition \ref{prop: exist time dep}, to prove the existence of a solution of equation \eqref{time dep eq with eps_source}, we prove the existence of a solution of an auxiliary equation, obtained via a time dependent change of variables.

The aim of the rest of this section is to prove the existence of a solution, for every $\varphi \in C^1([0,T], C^1_c(\mathbb R_*) )$, of the following truncated equation
\begin{align} \label{weak time dep} 
& \int_{\mathbb R_*} \Phi(t,d\xi ) \varphi(t,\xi)- \int_{\mathbb R_*} \Phi_0(d \xi) \varphi(0,\xi)  - \int_0^t \int_{\mathbb R_*} \partial_s \varphi(s,\xi)  \Phi(s, d\xi ) ds \\ \nonumber
& = \frac{1}{2}   \int_0^t \int_{\mathbb R_*} \int_{\mathbb R_*} K_{a}(\xi, z) [\varphi(s,\xi+z) \zeta_R(\xi+ z)-\varphi(s,\xi)- \varphi(s,z) ] \Phi(s,d\xi) \Phi(s,dz) ds  \\
& - \int_0^t \int_{\mathbb R_*} \varphi(s,\xi) \Phi(s,d\xi)ds -  \frac{2}{1-\gamma} \int_0^t  \int_{\mathbb R_*} \Xi_\varepsilon (\xi) \partial_\xi \varphi(s,\xi) \xi \Phi(s,d\xi ) ds \nonumber \\
& + \frac{2}{1-\gamma} \int_0^t  \int_{\mathbb R_*} \Xi_\varepsilon (\xi) \varphi(s,\xi) \Phi(s,d\xi )+  \int_0^t \int_{\mathbb R_*} \varphi(s,\xi) \eta_\varepsilon(\xi) d\xi ds \nonumber
\end{align}
where we are assuming that $\gamma < 1$ and $\Phi_0 \in \mathcal M_{+,b}(\mathbb R_*)$ with $\Phi_0((0, \varepsilon ] \cup [ 2 R, \infty))=0$.

The proof of the existence of a solution of equation \eqref{weak time dep} is standard, in the sense that it is based on Banach fixed point theorem.
More precisely, we will use a change of variables to obtain an equation which is easier to analyze (cf \eqref{fixed point eq}). This equation looks complicated, but it does not contain any transport term, and, therefore, it is suitable for a fixed point argument.

To prepare the change of variables, we introduce the following notation. We denote by $X(t,x)$ the solution of the characteristic ODE
\begin{equation}\label{charact ODE}
\frac{ \partial X(t,x) }{\partial t}= - \beta X(t,x) \Xi_\varepsilon(X(t,x)), \quad X(0,x)=x . 
\end{equation}

We also introduce the function $\ell: [0,T] \times \mathbb R_* \times \mathbb R_* \rightarrow \mathbb R_+$ which is the function that satisfies
\begin{equation}\label{ell}
X(t,\ell(t,x,y))= X( t,x) + X( t,y). 
\end{equation}
The function $\ell$ is well defined because the map $x \mapsto X(t,x)$ is a diffeomorphism for every time $t.$

A time dependent truncation of \eu{parameter $R$} is a function $\theta_R \in C^\infty( [1, T] \times \mathbb R_*  \times \mathbb R_*)$ defined by
\begin{equation}\label{def time dep truncation}
\theta_{R}(t,x, y) :=\zeta_{R }(X( \ln t,x)+X(\ln t,y)) 
\end{equation} 
 where $\zeta_{R}$ is a truncation function of parameter $R$.

We also define a truncated time dependent kernel.
A time dependent coagulation kernel of parameter $a>0$ is a continuous map $K_{a,T}: [1, T]  \times \mathbb R_* \times \mathbb R_* \rightarrow \mathbb R_+$ defined by
\begin{equation} \label{time dep kernel non vs bounded kernel}
K_{a,T}(t,x,y):= t^{-2} K_a(X(\ln t , x),X(\ln t , y))
\end{equation}
where $K_a$ is a bounded coagulation kernel of bound $a$. 
We also introduce a new auxiliary source $\tilde{\eta_\varepsilon}$ defined by
\begin{align}\label{relation sources}
\tilde{\eta_\varepsilon} (t, x)=e^{- g(\ln t, x)} \frac{\partial X(\ln t,x)}{\partial x } \eta_\varepsilon(X(\ln t,x) ), \quad t>1, x >0,
\end{align}
where $g$ is defined by
\begin{equation}\label{g} 
 g(\tau,x):= \beta \int_0^{ \tau} \Xi_\varepsilon (X(s,x)) ds \ \text{ for every }\ \tau>0, x>0. 
\end{equation}
Notice that, by the \eu{change of variables} formula,  this implies that for every test function $\varphi$ and every time $t>0$
\begin{align}\label{change of var source}
\int_{\mathbb R_*} \varphi(\xi) \eta_\varepsilon(\xi) d\xi=
\int_{\mathbb R_*} \varphi(X(t ,x)) e^{g(t,x) }\tilde{\eta_\varepsilon} ( e^t, x) dx. 
\end{align}

\begin{proposition}\label{prop:exist_unique sol}
Let $T>1$, $\gamma <1 $, $\beta=2/(1-\gamma)$, and consider a source $\tilde{\eta_\varepsilon}$, a kernel $K_{a,T}$, a truncation $\theta_{R}$ and a truncation $\Xi_\varepsilon$. 
Let $\ell $ be the function defined by \eqref{ell}.
Consider an initial condition $f_1 \in \mathcal M_{+,b}(\mathbb R_*)$ with $f_1((0 , \varepsilon] \cup (2R, \infty ))=0.$
Then there exists a unique solution to the equation 
\begin{align} \label{fixed point eq}
&\int_{\mathbb R_*} \varphi(x) \dot{f}(t,dx) =  \int_{\mathbb R_*} \varphi(x) \tilde{\eta_\varepsilon}(t, x) dx  \\
&+ \int_{\mathbb R_*} \int_{\mathbb R_*} \frac{K_{a,T}(t,x,y)}{2} \left( \Lambda[\varphi] (t,x,y)- \varphi(x) e^{g(\ln t,y)} - \varphi(y) e^{g(\ln t,x)}\right)   f(t,dx) f(t,dy) \nonumber 
\end{align} 
for any $\varphi \in C_c(\mathbb R_*)$ and $t \in [1,T]$, 
with $f(1, \cdot)=f_1(\cdot)$ and where 
\begin{align}
&\Lambda[\varphi] (t,x,y):= \varphi( \ell(\ln t,x,y) ) e^{- g(\ln t, \ell (\ln t,x,y) )+ g(\ln t,x)+g(\ln t,y)} \theta_R(t, x, y) . 
\end{align}
The solution $f \in C^1([1,T], \mathcal M_{+,b}(\mathbb R_*))$ has the following properties for every $t \in [1,T]$
\begin{equation} \label{eq:bound for f}
\int_{\mathbb R_*} f(t, dx) \leq T \| \eta_\varepsilon \| + \| f_1\|,
\end{equation}
 and
\begin{equation} \label{support f}
f(t, (0,\varepsilon] \cup (2R  t^{ \beta }, \infty ))=0. 
\end{equation}
\end{proposition}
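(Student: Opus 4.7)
The plan is a standard Picard iteration in the Banach space $\mathcal{M}_{+,b}(\mathbb{R}_*)$ equipped with the total variation norm. I would rewrite \eqref{fixed point eq} as the integral equation
\[
f(t) = f_1 + \int_1^t G(s, f(s))\, ds,
\]
where $G(t, \cdot) : \mathcal{M}_{+,b}(\mathbb{R}_*) \to \mathcal{M}_{+,b}(\mathbb{R}_*)$ is the vector field defined by duality through the right-hand side of \eqref{fixed point eq}. The crucial observation is that, on $[1,T]$, the kernel satisfies $K_{a,T}(t,x,y) \leq a/t^2$, the weight $g(\ln t, \cdot)$ lies in $[0, \beta \ln T]$, and $\theta_R \leq 1$, so that $G$ decomposes into a bounded affine piece (from the source) and a bounded bilinear piece (from coagulation); in particular, on any ball $\{\|f\| \leq M\}$ the map $G(t, \cdot)$ is Lipschitz with constant of order $C(a, T, R, \varepsilon) M$. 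Banach's fixed-point theorem then yields a unique local solution in $C([1, 1+\delta], \mathcal{M}_{+,b})$ for $\delta$ small depending on $M > \|f_1\|$.

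To extend the solution to $[1,T]$ and to obtain \eqref{eq:bound for f}, I would derive the a priori mass estimate by testing against $\varphi \equiv 1$ (justified through a smooth cut-off on the support of $f(t)$, whose invariance is established simultaneously). The source contributes, after the change of variables $\xi = X(\ln t, x)$,
\[
\int_{\mathbb{R}_*} \tilde{\eta_\varepsilon}(t, x)\, dx = \int_{\mathbb{R}_*} e^{-g(\ln t,\, X(\ln t, \cdot)^{-1}(\xi))} \eta_\varepsilon(\xi)\, d\xi \leq \|\eta_\varepsilon\|,
\]
using $g \geq 0$. For the coagulation contribution, one shows that $x \mapsto g(\ln t, x)$ is non-decreasing (because $\Xi_\varepsilon$ is non-decreasing and $X(\cdot, \cdot)$ is order-preserving in $x$) and that $\ell(\ln t, x, y) \geq \max(x, y)$ (because $X(\ln t, \cdot)$ is increasing), so that $g(\ln t, \ell) \geq \max(g(\ln t, x), g(\ln t, y))$ and
\[
\Lambda[1] \leq \min\bigl(e^{g(\ln t, x)}, e^{g(\ln t, y)}\bigr) \leq \tfrac{1}{2}\bigl(e^{g(\ln t, x)} + e^{g(\ln t, y)}\bigr),
\]
making the coagulation integrand in the $\varphi \equiv 1$ equation non-positive. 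Integrating $\frac{d}{dt}\|f(t)\| \leq \|\eta_\varepsilon\|$ over $[1, t]$ yields \eqref{eq:bound for f} and rules out blow-up, so the local solution extends to all of $[1, T]$.

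Non-negativity of $f$ follows from a Duhamel representation, writing the loss term as a non-negative multiplier acting on $f$ and the source and gain terms as non-negative measures. The support property \eqref{support f} is an invariance for the Picard iteration: the source $\tilde{\eta_\varepsilon}(t, \cdot)$ lives in $\{x : X(\ln t, x) \in [\varepsilon, 2\varepsilon]\} \subset [\varepsilon, 2Rt^\beta]$; on the support of the gain term, $\theta_R$ forces $X(\ln t, \ell) \leq 2R$, hence $\ell \leq 2Rt^\beta$, while $x, y \geq \varepsilon$ gives $\ell \geq \varepsilon$; and the loss term keeps $f$ on its existing support. The $C^1$-regularity in $t$ is automatic from $G(\cdot, f(\cdot)) \in C([1, T], \mathcal{M}_{+,b})$. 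The main obstacle is securing the linear-in-$T$ mass bound \eqref{eq:bound for f} rather than a Grönwall-exponential one; this rests crucially on the monotonicity of $g$ in $x$ and on $\ell \geq \max(x,y)$, which together force the coagulation term to dissipate mass.
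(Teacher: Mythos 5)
Your proposal is essentially correct and follows the same overall strategy as the paper: a Banach fixed-point argument on a short time interval, an a priori mass estimate to rule out blow-up and permit continuation, the monotonicity of $x\mapsto g(\ln t,x)$ combined with $\ell(\ln t,x,y)\geq\max(x,y)$ to force the coagulation integrand non-positive when tested against $\varphi\equiv 1$, and a support-invariance argument. The inequalities you use for the mass bound are the same as the paper's \eqref{monotone g}, and your estimate of the Lipschitz constant $C(a,T,R,\varepsilon)M$ on balls is the content of Lemma \ref{lem:F contraction}.

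The one structural difference is the form of the fixed-point operator. You iterate the naive integral form $f(t)=f_1+\int_1^t G(s,f(s))\,ds$, which a priori lives in the space of \emph{signed} bounded measures, and then recover non-negativity via a Duhamel representation as a separate step. The paper instead \emph{builds} the Duhamel form into the iteration operator $\mathcal F$ of \eqref{definition of F}: the loss term is absorbed into the multiplicative damping factor $e^{-\int_s^t b[f](\xi,x)\,d\xi}$, so each of $\mathcal F_1,\mathcal F_2,\mathcal F_3$ manifestly maps non-negative measures to non-negative measures, and the fixed point lands in $\mathcal M_{+,b}(\mathbb R_*)$ automatically; the paper then obtains differentiability and passes back to \eqref{fixed point eq} by differentiating $\mathcal F[f]=f$. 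Your route is a legitimate alternative and a bit cleaner to state the Lipschitz bound in, but it requires you to make precise the positivity step; the paper's route avoids that issue at the cost of a more elaborate contraction estimate (because $b[f]$ appears in an exponential depending nonlinearly on $f$). Your sketch of the source bound, the cut-off justification for $\varphi\equiv 1$, and the support argument all match the paper's.
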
 
Before starting with the proof of this proposition we provide some definitions and two auxiliary lemmas that help \eu{in its proof. }
Let us define the operator $\mathcal F$, that is a contraction, as it  will be shown in the proof of Proposition \ref{prop:exist_unique sol}, whose fixed point is the solution of \eqref{fixed point eq}.

Consider $f \in C([1,T], \mathcal M_{+,b}(\mathbb R_*))$ wih $f(1, \cdot)=f_1(\cdot)$. 
We denote by $b$ and $h_R$, the following expressions
\[
b[f](t,x):=\int_{\mathbb R_*} K_{a,T}(t,x,y) f(t,dy) e^{g(\ln t,x)} , \]
\[
h_R(t,s,x,y) := \theta_{R}(s,x,y) e^{- \int_s^t b[f] (\xi,x) d\xi }.
\]
The operator $\mathcal F [f] (t) : C_0(\mathbb R_*) \mapsto \mathbb R_*$ is defined by
\begin{align} \label{definition of F}
\langle \mathcal F [f] (t) , \varphi \rangle := \langle \mathcal F_1 [f] (t) , \varphi \rangle  + \langle \mathcal F_2 [f] (t) , \varphi \rangle  + \langle \mathcal F_3 [f] (t) , \varphi \rangle  
\end{align} 
for $t \in [1,T] ,$ where the operators $\mathcal F_i: C_0(\mathbb R_*) \mapsto \mathbb R_*$ are defined by
\[
\langle \mathcal F_1 [f] (t) , \varphi \rangle := \int_{\mathbb R_*} \varphi(x) e^{-\int_1^t b[f](s,x) ds } f_1(dx), 
\]
\begin{align*}
&\langle \mathcal F_2 [f] (t) , \varphi \rangle := \int_1^t \int_{\mathbb R_*} \int_{\mathbb R_*} \Lambda[\varphi](s,x,y) e^{- \int_s^t b[f] (\xi,x) d\xi } \frac{K_{a,T}(s,x, y)}{2} f(s,dx)f(s,dy)ds, 
\end{align*} 
\[
\langle \mathcal F_3 [f] (t) , \varphi \rangle := \int_{\mathbb R_*} \varphi(x) \int_1^t e^{-\int_s^t b[f](v,x) dv}  \tilde{\eta_{\varepsilon}}\left(s,x \right)ds dx.
\]

We define the set $\mathcal X_\varepsilon$ as 
\begin{equation}\label{xepsilon}
\mathcal X_\varepsilon:=\{ f \in \mathcal M_{+}(\mathbb R_*) : f((0,\varepsilon])=0\}. 
\end{equation} 
The set $\mathcal X_\varepsilon$ is a closed set both with respect to the \eu{weak$-*$} topology and the norm topology on $\mathcal M_{b}(\mathbb R_*)$, thus it is a Banach space with respect to the total variation norm.

\begin{lemma}\label{lem:continuity F} 
Assume $ \gamma,\beta , \eta_\varepsilon, K_{a,T}$, $\Xi_\varepsilon$, $\ell$ and $\theta_R$  to be as in Proposition \ref{prop:exist_unique sol}.  
The operator $\mathcal F$ defined by \eqref{definition of F}, for any initial condition $f_1\in \mathcal X_\varepsilon $,  maps 
$
C([1,T], \mathcal X_\varepsilon) 
$ into itself. 
\end{lemma}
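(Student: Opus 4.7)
The plan is to verify the two assertions packed into the statement: (i) for each $t \in [1,T]$, the functional $\mathcal{F}[f](t)$ is a positive bounded Radon measure on $\mathbb{R}_*$ that vanishes on $(0,\varepsilon]$, and (ii) the map $t \mapsto \mathcal{F}[f](t)$ is continuous into $\mathcal{M}_{+,b}(\mathbb{R}_*)$ equipped with the total variation norm. I will treat the three pieces $\mathcal{F}_1, \mathcal{F}_2, \mathcal{F}_3$ separately. A preliminary step is to record the uniform bounds available on $[1,T]$: since $f \in C([1,T], \mathcal{X}_\varepsilon)$, the quantity $M := \sup_{s \in [1,T]} \|f(s,\cdot)\|$ is finite; since $\Xi_\varepsilon \in [0,1]$, definition \eqref{g} gives $g(\ln t, \cdot) \leq \beta \ln T$, so $e^{\pm g(\ln t, \cdot)} \leq T^\beta$ for $t \in [1,T]$; the truncation gives $0 \leq \theta_R \leq 1$; the bounded kernel gives $K_{a,T}(t,x,y) \leq a$; and consequently $b[f](t,x) \leq a M T^\beta =: B$. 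These bounds show each $\mathcal{F}_i[f](t)$ is a positive linear functional on $C_0(\mathbb{R}_*)$ with norm controlled uniformly in $t$, and hence a positive bounded Radon measure by Riesz--Markov--Kakutani.

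For the support condition, the case of $\mathcal{F}_1$ is immediate: since $f_1 \in \mathcal{X}_\varepsilon$, the measure $e^{-\int_1^t b[f]\,ds}f_1(dx)$ still vanishes on $(0,\varepsilon]$. For $\mathcal{F}_3$, the key observation is that $\Xi_\varepsilon \equiv 0$ on $(0,\varepsilon]$, so uniqueness for \eqref{charact ODE} forces $X(\tau,x)=x$ for every $\tau \geq 0$ and $x \leq \varepsilon$; definition \eqref{relation sources} then reduces to $\tilde{\eta}_\varepsilon(s,x)=\eta_\varepsilon(x)$ on $(0,\varepsilon]$, which vanishes because $\eta_\varepsilon$ is smooth with $\supp \eta_\varepsilon \subset [\varepsilon,2\varepsilon]$. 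The subtle piece is $\mathcal{F}_2$: I must show $\ell(\ln s,x,y)>\varepsilon$ whenever $x,y>\varepsilon$. Uniqueness for the characteristic ODE (whose right-hand side is Lipschitz thanks to smoothness of $\Xi_\varepsilon$) together with the fact that $X\equiv\varepsilon$ is a stationary solution proves that $(\varepsilon,\infty)$ is forward-invariant under the flow, so $X(\ln s,x),X(\ln s,y)>\varepsilon$ and $X(\ln s,\ell)=X(\ln s,x)+X(\ln s,y)>2\varepsilon$. Since $\partial_\tau X=-\beta X\Xi_\varepsilon(X)\leq 0$ gives $X(\ln s, z)\leq z$, I conclude $\ell \geq X(\ln s,\ell) > 2\varepsilon > \varepsilon$. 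Thus for any $\varphi \in C_c((0,\varepsilon])$ the quantity $\Lambda[\varphi](s,x,y)$ vanishes on $\supp f(s) \times \supp f(s)$, and therefore $\mathcal{F}_2[f](t)((0,\varepsilon])=0$.

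For the continuity in $t$, I fix $t_0 \in [1,T]$ and test against $\varphi\in C_0(\mathbb{R}_*)$ with $\|\varphi\|_\infty\leq 1$, estimating $|\langle \mathcal{F}_i[f](t)-\mathcal{F}_i[f](t_0),\varphi\rangle|$ by a quantity independent of $\varphi$ that tends to $0$ as $t \to t_0$. The map $u\mapsto e^{-u}$ being $1$-Lipschitz on $[0,\infty)$ yields $|e^{-\int_1^t b\,ds}-e^{-\int_1^{t_0}b\,ds}|\leq B|t-t_0|$, and hence $\|\mathcal{F}_1[f](t)-\mathcal{F}_1[f](t_0)\|\leq B\|f_1\||t-t_0|$. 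For $\mathcal{F}_2$ and $\mathcal{F}_3$ the difference splits naturally into a contribution from the varying upper limit of integration (an integral over $[t_0\wedge t,t_0\vee t]$ of a uniformly bounded integrand) and a contribution from the change of the exponential factor $e^{-\int_s^t b\,d\xi}$; both are $O(|t-t_0|)$, using the uniform bounds already collected together with $\int \tilde{\eta}_\varepsilon(s,x)\,dx \leq \|\eta_\varepsilon\|$ (obtained by the change of variables $\xi=X(\ln s,x)$). The main obstacle is the support verification for $\mathcal{F}_2$, where one must use the precise structure of the characteristic flow to transfer invariance of $(\varepsilon,\infty)$ under $X$ to the auxiliary map $\ell$; the continuity part itself reduces to routine bounded-convergence arguments once the uniform bounds from the first paragraph are in place.
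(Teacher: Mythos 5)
Your proposal is correct and follows essentially the same route as the paper's proof: establish that each $\mathcal{F}_i[f](t)$ defines a positive bounded Radon measure via Riesz–Markov and uniform bounds, check the support condition piece by piece (with the $\mathcal{F}_2$ case handled through the monotonicity of the characteristic flow, which the paper packages as the inequality $\ell(t,x,y)\geq\max\{x,y\}$ while you derive the equivalent $\ell>2\varepsilon$), and prove continuity in $t$ by Lipschitz estimates on the exponential factors plus the extra $O(|t-t_0|)$ term from the moving upper integration limit. The only cosmetic difference is that you absorb the $t^{\gamma\beta}$ growth of $b[f]$ into a single constant $B=aMT^\beta$, whereas the paper tracks it more explicitly, but this has no bearing on correctness.
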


Consider an initial condition $f_1$ for equation \eqref{fixed point eq}, we denote by $X_T $ the set defined by
\begin{equation} \label{set X_T} 
X_T:= \{ f \in C([1,T], \mathcal X_\varepsilon) : \|f -f_1 \|_{[1,T]} \leq 1 + \|f_1 \| \}. 
\end{equation}

\begin{lemma} \label{lem:F contraction}
Under the assumptions of Lemma \ref{lem:continuity F}, we deduce that if 
\begin{equation} \label{final bound for T}
T-1 \leq \frac{C(\eta_\varepsilon,a )}{1+ \| f_1\|}
\end{equation}
for a suitable constant $C(\eta_\varepsilon, a) >0$, then,  for every $f, g \in X_T$, it holds that
\begin{equation}\label{bound C_T}
\| \mathcal F[f](\cdot)-  \mathcal F[g](\cdot) \|_{[1,T]} \leq  C_T \| f-g\|_{[1,T]}
\end{equation}
with $0<C_T < \frac{1}{2}$ and 
\begin{align} \label{D_T}
\| \mathcal F[f_1]- f_1 \|_{[1,T]}  \leq  D_T \left( 1+\|  f_1 \|_{[1,T]}\right) 
\end{align}
with $0<D_T< \frac{1}{2}.$
\end{lemma}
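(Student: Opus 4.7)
\medskip

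\noindent\textbf{Proof proposal.} The plan is a direct estimate of each piece $\mathcal{F}_1,\mathcal{F}_2,\mathcal{F}_3$ in the total variation norm, exploiting the uniform bounds that hold on the time window $[1,T]$ with $T$ close to $1$. The following quantities will be used repeatedly. Since $0\le \Xi_\varepsilon\le 1$, the definition \eqref{g} gives $0\le g(\ln t,x)\le \beta\ln t$, so $e^{g(\ln t,x)}\le T^{\beta}$ and $e^{-g(\ln t,\ell)}\le 1$. The time-dependent kernel satisfies $K_{a,T}(s,x,y)\le a$ and the truncation $\theta_R\le 1$; the change of variables $\xi=X(\ln s,x)$ together with $e^{-g}\le 1$ yields $\int_{\mathbb{R}_*}\tilde\eta_\varepsilon(s,x)\,dx\le \|\eta_\varepsilon\|$. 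Finally, for $f,g\in X_T$ one has $\|f\|_{[1,T]}\le 1+2\|f_1\|$, and hence $b[f](s,x)\le aT^{\beta}(1+2\|f_1\|)$.

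For the contraction bound \eqref{bound C_T} we take $\varphi\in C_0(\mathbb{R}_*)$ with $\|\varphi\|_\infty\le 1$. The key elementary inequality is $|e^{-u}-e^{-v}|\le |u-v|$ for $u,v\ge 0$, which combined with
\begin{equation*}
|b[f]-b[g]|(s,x)\ \le\ aT^{\beta}\,\|f(s,\cdot)-g(s,\cdot)\|\ \le\ aT^{\beta}\,\|f-g\|_{[1,T]}
\end{equation*}
gives $|\langle \mathcal{F}_1[f]-\mathcal{F}_1[g],\varphi\rangle|\le (T-1)\,aT^{\beta}\,\|f_1\|\,\|f-g\|_{[1,T]}$ and, after integration in $s$, $|\langle \mathcal{F}_3[f]-\mathcal{F}_3[g],\varphi\rangle|\le (T-1)^{2}\,aT^{\beta}\,\|\eta_\varepsilon\|\,\|f-g\|_{[1,T]}$. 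The quadratic term $\mathcal{F}_2$ requires the decomposition $f\otimes f-g\otimes g=(f-g)\otimes f+g\otimes(f-g)$, to which one adds a third piece coming from the exponential factor $e^{-\int_s^t b[f]}-e^{-\int_s^t b[g]}$. Using $|\Lambda[\varphi]|\le T^{2\beta}$, the total bound is
\begin{equation*}
|\langle \mathcal{F}_2[f]-\mathcal{F}_2[g],\varphi\rangle|\ \le\ (T-1)\,a T^{2\beta}\bigl(1+\|f_1\|\bigr)^{2}\,\|f-g\|_{[1,T]},
\end{equation*}
up to a numerical constant. Summing the three contributions gives $\|\mathcal{F}[f]-\mathcal{F}[g]\|_{[1,T]}\le (T-1)\,\tilde C(a,T,\|\eta_\varepsilon\|)\,(1+\|f_1\|)\,\|f-g\|_{[1,T]}$, where the factor $(1+\|f_1\|)$ (and not $(1+\|f_1\|)^{2}$) is preserved because $T$ is chosen bounded, so $T^{\beta}$ is harmless, and the worst term is linear in $\|f\|_{[1,T]}\le 1+2\|f_1\|$.

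For the second inequality \eqref{D_T} the starting observation is that $\mathcal{F}[f_1](1)=f_1$, because $\mathcal{F}_2[f_1](1)$ and $\mathcal{F}_3[f_1](1)$ both vanish (the $s$-integrals over $[1,1]$ are zero) and $\mathcal{F}_1[f_1](1)=f_1$ since the exponent vanishes at $s=t=1$. Thus one estimates $\mathcal{F}[f_1](t)-f_1$ by the same bookkeeping, this time using $|e^{-u}-1|\le u$: the term $\mathcal{F}_1[f_1](t)-f_1$ contributes at most $(T-1)aT^{\beta}\|f_1\|^{2}$, the term $\mathcal{F}_2[f_1](t)$ contributes at most $(T-1)aT^{2\beta}\|f_1\|^{2}$, and $\mathcal{F}_3[f_1](t)$ contributes at most $(T-1)\|\eta_\varepsilon\|$. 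Summing, $\|\mathcal{F}[f_1]-f_1\|_{[1,T]}\le (T-1)\,\bar C(a,T,\|\eta_\varepsilon\|)\,(1+\|f_1\|^{2})\le (T-1)\,\bar C\,(1+\|f_1\|)^{2}$, which is of the required form $D_T(1+\|f_1\|)$ with $D_T=(T-1)\bar C(1+\|f_1\|)$.

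Both $C_T$ and $D_T$ are linear in $T-1$ with a prefactor that grows at most like $1+\|f_1\|$, so choosing $T-1\le C(\eta_\varepsilon,a)/(1+\|f_1\|)$ with $C(\eta_\varepsilon,a)$ small enough forces $C_T,D_T<1/2$, proving \eqref{final bound for T} and the lemma. The only delicate point in this otherwise routine computation is the quadratic term $\mathcal{F}_2$: one has to be careful to split off simultaneously the difference of the measures and the difference of the exponentials, and to keep the factor $(1+\|f_1\|)$ rather than $(1+\|f_1\|)^{2}$ in the final contraction estimate, since this is what allows the radius of the ball $X_T$ to be absorbed and guarantees the applicability of the Banach fixed point theorem in the subsequent Proposition \ref{prop:exist_unique sol}.
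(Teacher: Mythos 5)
Your overall strategy mirrors the paper's: estimate each $\mathcal F_i$ separately with $|e^{-u}-e^{-v}|\le|u-v|$ and $|e^{-u}-1|\le u$, decompose $f\otimes f-g\otimes g=(f-g)\otimes f+g\otimes(f-g)$, and then choose $T-1$ small. The paper does the time integrals a bit more carefully (distinguishing the sign of $\gamma$ via $\max\{\tilde T^{\gamma\beta+1}/|\gamma\beta|,\tilde T\}$, coming from $\int_1^t s^{\beta\gamma}\,ds$), while you use the cruder $T^{\beta}(T-1)$; that simplification is harmless on $[1,T]$ with $T\le 2$.

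There is, however, a real bookkeeping error in your $\mathcal F_2$ estimate, and it undermines the justification of the crucial bound. The third piece of $\mathcal F_2[f]-\mathcal F_2[g]$, the one involving $e^{-\int_s^t b[f]}-e^{-\int_s^t b[g]}$, carries \emph{two} powers of $(T-1)$: one from the outer $\int_1^t ds$, and a second from $\left|e^{-\int_s^t b[f]}-e^{-\int_s^t b[g]}\right|\le\int_s^t|b[f]-b[g]|\,d\xi\le (T-1)\,aT^{\beta}\|f-g\|_{[1,T]}$. It is also genuinely quadratic in $\|g\|_{[1,T]}\le 1+2\|f_1\|$. So the correct bound for this piece is of the form $(T-1)^{2}\,(1+\|f_1\|)^{2}\,\|f-g\|$, not $(T-1)\,(1+\|f_1\|)^{2}\,\|f-g\|$ as you wrote. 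With only a single power of $(T-1)$, the hypothesis $T-1\le C/(1+\|f_1\|)$ would give $C(1+\|f_1\|)\|f-g\|$, which is unbounded in $\|f_1\|$ and would not yield $C_T<\tfrac12$; one would instead need the stronger $T-1\le C/(1+\|f_1\|)^{2}$.

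Your subsequent claim that "the factor $(1+\|f_1\|)$ (and not $(1+\|f_1\|)^{2}$) is preserved because $T$ is chosen bounded, so $T^{\beta}$ is harmless, and the worst term is linear in $\|f\|_{[1,T]}$" is therefore not correct: the worst term in $\mathcal F_2$ really is quadratic in $\|f\|_{[1,T]}$, and the $T^{\beta}$ factor has nothing to do with reducing that power. The reason the Lemma's condition $T-1\le C/(1+\|f_1\|)$ nevertheless suffices is precisely that the quadratic-in-$\|f_1\|$ term comes paired with $(T-1)^{2}$, so that $\bigl((T-1)(1+\|f_1\|)\bigr)^{2}\le C^{2}$ is controlled uniformly. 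This is exactly the structure the paper keeps visible in its definition of $C_T$, which contains both a term proportional to $\tilde T(1+2\|f_1\|)$ and a term proportional to $\tilde T^{2}(1+2\|f_1\|)^{2}$. Once you restore the second power of $(T-1)$ in the $\mathcal F_2$ estimate, your argument coincides with the paper's and the conclusion follows.
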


The proofs of these Lemmas are postponed to the Appendix, as they are based on elementary sequences of inequalities. 
\begin{proof}[Proof of Proposition \ref{prop:exist_unique sol}]
Thanks to Lemma \ref{lem:continuity F} we already know that $\mathcal F$ maps $C([1,T], \mathcal X_\varepsilon)$ into itself. 

By Lemma \ref{lem:F contraction} we also know that if $T $ satisfies \eqref{final bound for T}, then $\mathcal F$ is a contraction and for every $f \in X_T$, 
\begin{align*}
\| \mathcal F[f]- f_1 \|_{[1,T]}   \leq \left(C_T+D_T\right) \left( 1+ \| f_1 \|_{[1,T]} \right) 
\end{align*} 
with $C_T+D_T <1.$

By Banach fixed point theorem we deduce that if $T $ satisfies \eqref{final bound for T}, then the operator $\mathcal F$ has a unique fixed point $f$ in $X_T$.  
Notice that if $f \in C([1,T], \mathcal X_\varepsilon) $, then $\mathcal F[f] \in C^1([1,T], \mathcal X_\varepsilon )$, therefore the map $f: [1,T] \rightarrow \mathcal X_\varepsilon$ is Fr\'echet differentiable. 
Differentiating $\mathcal F[f]=f$ we deduce that $f$ satisfies equation \eqref{fixed point eq}. (For the details of this computation we refer to the proof of Lemma 5.6 in \cite{vuoksenmaa2020existence}.)

For the moment we only know that the solution of equation \eqref{fixed point eq} exists if $T$ is small enough. 
Let us show, following the strategy of \cite{ferreira2019stationary} and \cite{vuoksenmaa2020existence}, that we can extend this solution to the whole line $[1, \infty ).$
To this end we first prove inequality \eqref{eq:bound for f}. 
Considering in \eqref{fixed point eq} a test function $\varphi \in C_c(\mathbb R_+)$ such that $0 \leq \varphi \leq 1 $ and $\varphi=1 $ on $[\varepsilon, 2T^{ \beta} R ]$ we obtain the following apriori estimate, for every $t>1$
\begin{align*}
&\int_{\mathbb R_*} \varphi(x) \dot{f}(t,dx) =  \int_{\mathbb R_*} \varphi(x) \tilde{\eta_\varepsilon}(t, x) dx \\
&+ \int_{(\varepsilon, \infty )} \int_{(\varepsilon, \infty )} \frac{K_{a,T}(t,x,y)}{2} \left( \Lambda[\varphi] (t,x,y) - \varphi(x) e^{g(\ln t,y)} - \varphi(y) e^{g(\ln t,x)} \right) f(t,dx) f(t,dy)\\
& \leq \|\eta_\varepsilon \|, 
\end{align*} 
where the last inequality is a consequence of \eqref{change of var source} and of the fact that, due to the definition of $\ell$ and the monotonicity of $\Xi_\varepsilon$, $\max \{  g(\ln t,x) ,  g(\ln t,y) \}- g(\ln t, \ell (\ln t,x,y) )\leq 0$, hence
\begin{equation}  \label{monotone g}
 \Lambda[\varphi] (t,x,y) - \varphi(x) e^{g(\ln t,y)} - \varphi(y) e^{g(\ln t,x)} \leq 0 \quad t >0, \quad x,y > \varepsilon
\end{equation}
for our choice of test function. 

Therefore, for every $t \in [1,T]$, inequality \eqref{eq:bound for f} holds and a unique solution of \eqref{fixed point eq} exists in the interval $[1,T_1]$ with $
T_1-1 :=\frac{C(\eta_\varepsilon,a )}{1+ \| f_1\|}. $
To extend the solution, preserving uniqueness and differentiability, we update the initial condition to $f_2(\cdot):=f (\frac{T_1}{2}, \cdot)$ and by \eqref{final bound for T} and \eqref{eq:bound for f} we deduce that there exists a unique solution on $[1,\frac{T_1}{2}+T_2]$ 
with 
\[
T_2-1:=  \frac{C(\eta_\varepsilon,a )}{1+ \| f_1\| + \frac{C(\eta_\varepsilon,a )\| \eta_\varepsilon \| } {1+ \| f_1\|} } \leq \frac{C(\eta_\varepsilon,a )}{1+ \| f_2\|}. 
\]
Iterating this argument, thanks to \eqref{eq:bound for f} and \eqref{final bound for T} we deduce that a unique solution exists on the whole real line. 
We refer to the proof of Proposition 5.8 in \cite{vuoksenmaa2020existence} for the details. 

 We show that $f(t, (2Rt^\beta, \infty )) =0$. Considering a test function $\varphi_n $ which approaches $\chi_{ (2Rt^\beta, \infty )}$ in equation \eqref{fixed point eq}, we deduce that $\int_{\mathbb R_*} \varphi_n(x) \dot{f} (t,dx) \leq 0, $ for every $n \geq 0$
and the desired conclusion \eu{follows by} Lebesgue's dominated convergence theorem. 

\end{proof}
\begin{proposition} \label{prop: exist time dep}
Let $\gamma, \beta, K_{a,T}, \eta_\varepsilon$, $\Xi_\varepsilon$ and $\theta_R$ be as in Proposition \ref{prop:exist_unique sol} and let $f$ denote the solution of equation \eqref{fixed point eq} with respect to $K_{a,T}$, $\eta_\varepsilon$, $\Xi_\varepsilon$ and $\theta_R$. 
Then, $\Phi \in C^1([0,T], \mathcal M_{+,b}(\mathbb R_*) ) $, defined, by duality, as the function with measure values that satisfies, for every $\varphi \in C_b(\mathbb R_*)$ and every $\tau \in [0,T]$, the equality
\begin{equation} \label{eq:Phi function of f} 
\int_{\mathbb R_*} \varphi(\xi) \Phi(\tau, d \xi)= \int_{\mathbb R_*} \varphi(X(\tau,x)) e^{- \tau +g(\tau,x) }f(e^\tau, dx),
\end{equation}  
where $g$ is given by \eqref{g},
is a solution of equation \eqref{weak time dep} with respect to $K_a$, $\eta_\varepsilon$, $\zeta_R$ and $ \Xi_\varepsilon$ and the initial condition $\Phi_0 \in \mathcal M_+(\mathbb R_*)$ with $\Phi_0((0,\varepsilon] \cup (2R, \infty ))=0.$
The solution $\Phi$ has the following properties for every $\tau \in [0,T]$,
 \[
\Phi(\tau, (0, \varepsilon] \cup (2R , \infty ))=0, 
\]
\begin{equation}\label{eq:bounds for Phi}
 \int_{\mathbb R_*} \Phi(\tau, d\xi) \leq  e^{\frac{1+\gamma}{1-\gamma} T } \left( e^T \| \eta_\varepsilon\|+\| \Phi_0\| \right). 
\end{equation}
\end{proposition}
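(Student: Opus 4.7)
The strategy is to regard $\Phi(\tau,\cdot)$ as the push–forward, under the characteristic flow $x\mapsto X(\tau,x)$, of the weighted measure $e^{-\tau+g(\tau,x)}f(e^\tau,dx)$, and then to differentiate the dual pairing $\tau\mapsto\int\varphi\,\Phi(\tau,d\xi)$ in order to derive \eqref{weak time dep} directly from \eqref{fixed point eq}.

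\emph{Step 1 (well-definedness, support, regularity).} Since $f\in C^1([1,T],\mathcal M_{+,b}(\mathbb R_*))$ and both $X(\tau,\cdot)$ and $g(\tau,\cdot)$ are smooth (with $X(\tau,\cdot)$ a $C^1$-diffeomorphism of $\mathbb R_*$ by \eqref{charact ODE}), formula \eqref{eq:Phi function of f} defines a nonnegative Radon measure $\Phi(\tau,\cdot)$ for each $\tau\in[0,T]$, and $\tau\mapsto\Phi(\tau,\cdot)$ inherits $C^1$-regularity in the total variation norm. To check the support claim, I would observe that $\varepsilon$ is a stationary point of \eqref{charact ODE} (because $\Xi_\varepsilon(\varepsilon)=0$), so by uniqueness of the ODE one has $X(\tau,x)>\varepsilon$ whenever $x>\varepsilon$. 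For the upper bound, $\partial_\tau X=-\beta X\Xi_\varepsilon(X)\le 0$ forces monotone decay; solving the ODE explicitly on $\{X\ge 2\varepsilon\}$ (where $\Xi_\varepsilon\equiv 1$) gives $X(\tau,x)=xe^{-\beta\tau}\le 2R$ whenever $x\le 2Re^{\beta\tau}$, and the slow regime $X\le 2\varepsilon\le 2R$ is handled by monotonicity. Combined with \eqref{support f}, this shows $\Phi(\tau,(0,\varepsilon]\cup(2R,\infty))=0$. Also $\Phi_0=f_1$ since $X(0,x)=x$ and $g(0,x)=0$.

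\emph{Step 2 (derivation of the weak equation).} Fix $\varphi\in C_b(\mathbb R_*)$, or more generally $\varphi\in C^1([0,T],C^1_c(\mathbb R_*))$, and differentiate
\[
I(\tau):=\int_{\mathbb R_*}\varphi(\tau,\xi)\,\Phi(\tau,d\xi)=\int_{\mathbb R_*}\varphi(\tau,X(\tau,x))\,e^{-\tau+g(\tau,x)}\,f(e^\tau,dx)
\]
in $\tau$. Using $\partial_\tau X=-\beta X\Xi_\varepsilon(X)$ and $\partial_\tau g=\beta\Xi_\varepsilon(X)$, three boundary-type contributions appear: the transport term $-\beta\int\partial_\xi\varphi(\tau,\xi)\xi\Xi_\varepsilon(\xi)\,\Phi(\tau,d\xi)$, the damping term $-\int\varphi\,\Phi$, and the correction $+\beta\int\Xi_\varepsilon\varphi\,\Phi$, which match exactly the transport and damping terms of \eqref{weak time dep}. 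The remaining contribution comes from $e^\tau\dot f(e^\tau,dx)$ applied to the test function $\psi(x):=\varphi(\tau,X(\tau,x))e^{-\tau+g(\tau,x)}$, which belongs to $C_c(\mathbb R_*)$ because $X(\tau,\cdot)$ is a diffeomorphism onto $\mathbb R_*$ and $\varphi(\tau,\cdot)$ is bounded with compact support (or can be truncated accordingly). Substituting into \eqref{fixed point eq}, the source term reduces to $\int\varphi(\tau,\xi)\eta_\varepsilon(\xi)\,d\xi$ by \eqref{change of var source}, which is precisely the source term of \eqref{weak time dep}. For the coagulation term I would use the key identity $X(\tau,\ell(\tau,x,y))=X(\tau,x)+X(\tau,y)$ to compute
\[
\Lambda[\psi](e^\tau,x,y)=\varphi\bigl(\tau,X(\tau,x)+X(\tau,y)\bigr)\,e^{-\tau+g(\tau,x)+g(\tau,y)}\,\zeta_R(X(\tau,x)+X(\tau,y)),
\]
and the explicit scaling $K_{a,T}(e^\tau,x,y)=e^{-2\tau}K_a(X(\tau,x),X(\tau,y))$, so that after the change of variables $\xi=X(\tau,x)$, $\eta=X(\tau,y)$ the coagulation integral becomes the one in \eqref{weak time dep}.

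\emph{Step 3 (mass bound and conclusion).} Testing with a function approaching $\mathbf{1}_{(\varepsilon,2R]}$, allowed by the support property, and using $g(\tau,x)\le\beta\tau$ yields
\[
\int_{\mathbb R_*}\Phi(\tau,d\xi)\le e^{(\beta-1)\tau}\|f(e^\tau,\cdot)\|,\qquad \beta-1=\tfrac{1+\gamma}{1-\gamma},
\]
and \eqref{eq:bound for f} combined with $\Phi_0=f_1$ gives \eqref{eq:bounds for Phi}. The main obstacle I expect is the coagulation-term bookkeeping in Step 2: all factors $e^{g(\tau,x)}$, $e^{g(\tau,y)}$, $e^{g(\tau,\ell)}$, $e^{-\tau}$, and the $e^{-2\tau}$ from the definition of $K_{a,T}$ must cancel exactly to produce the un-weighted kernel $K_a$ paired against $\Phi(\tau,d\xi)\Phi(\tau,d\eta)$, and one must verify that $\psi$ defined above is an admissible test function for \eqref{fixed point eq} throughout this manipulation.
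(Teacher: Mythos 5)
Your proposal is correct and follows essentially the same route as the paper's own proof: you push $\Phi$ forward along the characteristic flow $X(\tau,\cdot)$, differentiate the pairing $\int\varphi\,\Phi(\tau,d\xi)$, substitute the test function $\psi(\tau,x)=\varphi(\tau,X(\tau,x))e^{-\tau+g(\tau,x)}$ (equivalent, up to the constant factor $e^{-\tau}$, to the paper's choice $\varphi(\tau,X(\tau,x))e^{g(\tau,x)}$) into \eqref{fixed point eq}, and use \eqref{ell}, \eqref{time dep kernel non vs bounded kernel}, \eqref{change of var source} to transform the coagulation, transport, and source terms into those of \eqref{weak time dep}. The support claim via the stationary point of \eqref{charact ODE} and the mass bound via $g\le\beta\tau$ together with \eqref{eq:bound for f} likewise coincide with the paper's argument.
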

\begin{proof}
By the change of variables \eqref{eq:Phi function of f}, for every $\varphi \in C^1([0,T], C^1_c(\mathbb R_*) )$
\begin{align*}
& \frac{d }{d \tau} \left( \int_{\mathbb R_*}  \varphi(\tau, \xi ) \Phi(\tau, d \xi)  \right) =  \frac{d}{d\tau} \left( \int_{\mathbb R_*} \varphi(\tau,X(\tau,x))  e^{- \tau + g(\tau,x)} f(e^\tau, dx)\right) \\
=&  \int_{\mathbb R_*} \frac{d}{d\tau}  \left(\varphi(\tau,X(\tau,x))  e^{- \tau +g(\tau,x)} \right)  f(e^\tau,dx) + \int_{\mathbb R_*} \varphi(\tau,X(\tau,x)) e^{ g(\tau,x)} \frac{d}{d e^\tau} f(e^\tau,dx) . 
\end{align*}
\eu{Expanding the first term and using the definition of $g$, \eqref{g}, we obtain}
\begin{align*}
& \int_{\mathbb R_*} \frac{d}{d\tau}  \left(\varphi(\tau,X(\tau,x))  e^{- \tau + \beta \int_0^\tau \Xi_\varepsilon (X(s,x))  ds} \right)  f(e^\tau,dx)  \\
&= \int_{\mathbb R_*}  [ \partial_1 \varphi(\tau,X(\tau,x))-\beta X(\tau,x) \Xi_\varepsilon( X(\tau,x) )  \partial_2 \varphi(\tau,X(\tau,x)) \\
&- \varphi(\tau,X(\tau,x)) + \beta \Xi_\varepsilon (X(\tau,x)) \varphi(\tau,X(\tau,x))  ]    e^{- \tau + \beta \int_0^\tau \Xi_\varepsilon (X(s,x)) ds} f(e^\tau,dx)  \\
&=  \int_{\mathbb R_*}  [ \partial_1 \varphi(\tau,z)-\beta z \Xi_\varepsilon( z )  \partial_2 \varphi(\tau,z) - \varphi(\tau,z) + \beta  \Xi_\varepsilon (z)\varphi(\tau,z) ] \Phi(\tau,dz). 
\end{align*} 
On the other hand, since $f$ is the fixed point of $\mathcal F$, choosing the test function 
\begin{equation}\label{psi_tau}
\psi(\tau,x):=\varphi(\tau, X(\tau,x))e^{g(\tau,x)} 
\end{equation}
in \eqref{fixed point eq} we deduce that 
\begin{align*}
&\int_{\mathbb R_*} \psi(\tau,x) \frac{d}{d e^\tau} f(e^\tau,dx) =  \int_{\mathbb R_*} \psi(\tau,x)  \tilde{\eta_\varepsilon}(e^\tau, x ) dx  \\
& \int_{\mathbb R_*} \int_{\mathbb R_*} \frac{K_{a,T}(e^\tau, x, y)}{2} \left( \Lambda [\psi(\tau,\cdot)] (e^\tau,x,y)- \psi( \tau, x) e^{g(\tau, y)}  - \psi( \tau, y)e^{g(\tau,x)} \right)\cdot \\
&\cdot  f(e^\tau,dx) f(e^\tau,dy)
\end{align*} 
which together with \eqref{time dep kernel non vs bounded kernel}, \eqref{change of var source} and \eqref{psi_tau} implies that 
\begin{align*}
&\int_{\mathbb R_*} \varphi(\tau, X(\tau,x))e^{g(\tau,x)}  \frac{d}{d e^\tau} f(e^\tau,dx) = \int_{\mathbb R_*} \varphi(\tau, X(\tau,x))e^{g(\tau,x)} \tilde{\eta_\varepsilon}(e^\tau, x ) dx \\ 
& + \int_{\mathbb R_*} \int_{\mathbb R_*}\frac{K_{a,T}(e^\tau, x, y)}{2} e^{g(\tau,y) + g(\tau,x)}  [ \varphi(\tau, X(\tau,x) + X(\tau,y) ) \theta_R(e^\tau,x,y) \\
& - \varphi(\tau, X(\tau,x))- \varphi(\tau, X(\tau,y))] f(e^\tau,dx) f(e^\tau,dy) \\
&=  \int_{\mathbb R_*} \varphi(\tau, 	\xi ) \eta_\varepsilon(\xi  ) d\xi\\
&+ \int_{\mathbb R_*} \int_{\mathbb R_*}\frac{K_{a}(X(\tau,x) , X(\tau,y)) }{2} [ \varphi(\tau, X(\tau,x) + X(\tau,y) ) \zeta_R(X(\tau,x)+X(\tau,y)) \\
& - \varphi(\tau, X(\tau,x))- \varphi(\tau, X(\tau,y))] e^{- \tau +  g(\tau,x)}  f(e^\tau,dx) e^{-\tau + g(\tau,y)}  f(e^\tau,dy). \\
\end{align*} 
By the  \eu{change of variables} \eqref{eq:Phi function of f}, we deduce that 
\begin{align*}
& \int_{\mathbb R_*} \varphi(\tau, X(\tau,x))e^{g(\tau,x)} \frac{d}{d e^\tau} f(e^\tau,dx) = \int_{\mathbb R_*} \varphi( \tau, \xi) \eta_{\varepsilon}(\xi ) d\xi \\
&= \int_{\mathbb R_*} \int_{\mathbb R_*} \frac{K_a(\xi,z)}{2} \left[ \zeta_R(z+\xi) \varphi(\tau, z+\xi) - \varphi( \tau, \xi) - \varphi(\tau, z ) \right] \Phi(\tau,d \xi) \Phi(\tau,dz). 
\end{align*} 
Summarizing, we have proven that $\Phi$ satisfies \eqref{weak time dep} for all $\varphi \in  C^1([0,T], C^1_c(\mathbb R_*) )$.

Since $f$ is a solution of equation \eqref{fixed point eq}, then \eqref{support f} holds. 
Considering a test function $\varphi=1 $ in $(0,\varepsilon] \cup (2Rt^\beta, \infty) $ in \eqref{eq:Phi function of f} and recalling that, if $x > 2\varepsilon $, then $X(t,x) = t^{-\beta } x $ and for every $\varepsilon \geq x>0$ and every $t>1$ we have that $X(t,x) = x $ we conclude that for every $\tau \in [0,T]$,
\[
\Phi(\tau, (0,\varepsilon] \cup  (2R, \infty ) )=0. 
\]

By inequality \eqref{eq:bound for f} and by the fact that $f_1 = \Phi_0$, we deduce that  
\begin{align*}
&\sup_{ \tau \in [0, T] } \int_{\mathbb R_*} \Phi(\tau, d\xi) = \sup_{ \tau \in [0, T] } \int_{\mathbb R_*} e^{- \tau + \beta \int_0^\tau  \Xi_\varepsilon (X(s,x)) ds} f(e^\tau, dx) \\
& \leq  \sup_{t \in [1, e^T] } \int_{\mathbb R_*} t^{-1 +\beta} f(t,dx) \leq e^{ \frac{1+\gamma }{1-\gamma}T}\left(  e^T \|\eta_\varepsilon \| + \| \Phi_0 \|\right). 
\end{align*} 
The bound \eqref{eq:bounds for Phi} follows. 
\end{proof}

\subsection{Existence of steady state solutions for the truncated coagulation equation with source written in self-similar variables} \label{sec:steady state}
In this Subsection we prove the existence of steady state solutions of equation \eqref{weak time dep} \eu{(see Proposition \ref{pro:stat solution}).}

\begin{definition} \label{steady state eq} 
Let $\gamma < 1$. 
We say that a measure $\Phi \in \mathcal M_{+, b}(\mathbb R_*)$
is a steady state solution of equation \eqref{weak time dep} with respect to $K_a, \zeta_R, \Xi_\varepsilon$ and $\eta_\varepsilon$, if it solves the following equation for every $\varphi \in C^1_c(\mathbb R_*) $
\begin{align} \label{stationary solution trunc}
&  \int_{\mathbb R_*}\varphi(x) \Phi(dx)=\int_{\mathbb R_*} \varphi (x) \eta_\varepsilon (x) dx +  \frac{2}{1-\gamma} \int_{\mathbb R_*} \Xi_\varepsilon(x)\left(  \varphi(x)- x \varphi'(x) \right) \Phi (dx)  \\
&+  \int_{\mathbb R_*}  \int_{\mathbb R_*} \frac{ K_{a}(x,y)}{2} [\zeta_R(x+y)\varphi(x+y)-\varphi(x) -\varphi(y) ] \Phi(dx) \Phi(dy). \nonumber
\end{align}
\end{definition}
In the following we will denote by $\chi_{z,n} $ the mollification of the characteristic function $\chi_{(0, z]}$, 
\begin{align}\label{mollified char} 
\chi_{z,n} (x)&= \int_{\mathbb R_*} \chi_{(0,z]} (x-y) \rho_n(y) dy
\end{align}
where $\rho_n$ are the mollifiers considered in \cite{giusti1984minimal}. 
In the following, we use the notation
\begin{equation} \label{eq:g}
g_{z,n}(x) :=\int_{x-z}^{\frac{1}{n}}  c_n  e^{\frac{1}{(n y)^2-1}} dy . 
\end{equation}
Classical results for mollifiers yield $\chi_{z,n} \rightarrow \chi_{[0,z] } \text{ in } L^1(\mathbb R_*).$

\begin{proposition} \label{pro:stat solution}
Let $\gamma <1.$
There exists a steady state solution for equation \eqref{weak time dep} corresponding to $K_a$, $\eta_\varepsilon$, $\Xi_\varepsilon$ and $\zeta_R$, $\Phi \neq 0$, as in \eqref{steady state eq}, satisfying 
\begin{equation}\label{support steady state}
\Phi((0, \varepsilon] \cup (2R, \infty ))=0, 
\end{equation}
\begin{equation} \label{ineq:first mom trunc}
0<\int_{\mathbb R_*} x \Phi(dx) \leq  1.
\end{equation} 
\end{proposition}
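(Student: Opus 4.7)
The plan is to apply Tychonoff's fixed point theorem to the time-$T$ evolution map $\mathcal{S}_T : \Phi_0 \mapsto \Phi(T,\cdot)$ furnished by Proposition \ref{prop: exist time dep} to obtain a $T$-periodic weak solution of \eqref{weak time dep}, and then to pass to the limit $T \downarrow 0$ to recover a genuine stationary solution of \eqref{stationary solution trunc}.

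I would first set up the invariant weak-$*$ compact convex set
\[
\mathcal{K} := \bigl\{\Phi \in \mathcal M_{+,b}(\mathbb R_*) : \Phi((0,\varepsilon]\cup(2R,\infty))=0,\ \textstyle\int_{\mathbb R_*} x\,\Phi(dx) \leq 1\bigr\}.
\]
Compactness follows from Banach--Alaoglu because supports lie in the fixed compact set $[\varepsilon,2R]$ and total mass is at most $1/\varepsilon$. Support preservation under $\mathcal{S}_T$ is already built into Proposition \ref{prop: exist time dep}. For the first-moment bound I would test \eqref{weak time dep} with a compactly supported cutoff of $\varphi(x)=x$ agreeing with $x$ on $[\varepsilon,2R]$: the two transport contributions annihilate because $\varphi - \xi\varphi' \equiv 0$ on the support of $\Phi$; the coagulation contribution $\tfrac12 \int\!\!\int K_a(x,y)(x+y)(\zeta_R(x+y)-1)\,\Phi\Phi$ is nonpositive since $\zeta_R \leq 1$; and the source gives exactly $\int x\,\eta_\varepsilon\,dx = 1$. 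This yields the differential inequality $\tfrac{d}{dt}\int x\,\Phi(t) \leq 1 - \int x\,\Phi(t)$, so Gronwall forces $\int x\,\Phi(t) \leq \max(1, \int x\,\Phi_0) = 1$. Weak-$*$ continuity of $\mathcal{S}_T$ on $\mathcal{K}$ comes from the uniformly bounded supports, which make products $\Phi\otimes\Phi$ behave continuously in weak-$*$, combined with the Lipschitz dependence of the Banach fixed point of Proposition \ref{prop:exist_unique sol} on its initial datum. Tychonoff then produces $\Phi_T^\ast \in \mathcal{K}$ with $\mathcal{S}_T(\Phi_T^\ast) = \Phi_T^\ast$, i.e., a $T$-periodic orbit $\Phi_T^\ast(\cdot)$ of \eqref{weak time dep}.

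To extract a stationary solution, I take a sequence $T_n \downarrow 0$ and extract a weak-$*$ convergent subsequence of the initial data, $\Phi_{T_n}^\ast \rightharpoonup \Phi^\ast \in \mathcal{K}$. The crucial ingredient is a uniform modulus-of-continuity estimate
\[
\sup_{s \in [0, T_n]} \bigl|\langle \psi, \Phi_{T_n}^\ast(s) - \Phi_{T_n}^\ast\rangle\bigr| \leq C(\psi)\,T_n, \qquad \psi \in C_c^1(\mathbb R_*),
\]
obtained directly from \eqref{weak time dep} by bounding its right-hand side through $\|K_a\|_\infty$, $\|\eta_\varepsilon\|$, and the uniform total mass $\leq 1/\varepsilon$ on $\mathcal{K}$. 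Integrating \eqref{weak time dep} over $[0, T_n]$ against a time-independent $\varphi \in C_c^1(\mathbb R_*)$, the boundary term vanishes by $T_n$-periodicity; dividing by $T_n$ and using the oscillation estimate, every time-average $\tfrac{1}{T_n}\int_0^{T_n}(\cdots)\,ds$ collapses to its value at $\Phi^\ast$, and the limiting identity is precisely the weak stationary equation \eqref{stationary solution trunc}. The bound $\int x\,\Phi^\ast \leq 1$ passes to the limit by testing against a continuous cutoff of $x$, and nontriviality $\Phi^\ast \neq 0$ is forced since otherwise \eqref{stationary solution trunc} would require $\int\varphi\,\eta_\varepsilon = 0$ for every test function, contradicting $\int x\,\eta_\varepsilon = 1$; the inequality $\int x\Phi^\ast > 0$ then follows from $\supp\Phi^\ast \subset [\varepsilon, 2R]$.

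The main obstacle is this last step: converting a family of vanishing-period orbits into a genuine stationary solution demands simultaneously the uniform modulus-of-continuity estimate as $T_n \to 0$ and the weak-$*$ passage through the quadratic coagulation nonlinearity, both of which are only tractable because the truncations $K_a$, $\zeta_R$, $\Xi_\varepsilon$ render every term on the right-hand side of \eqref{weak time dep} uniformly bounded on $\mathcal{K}$.
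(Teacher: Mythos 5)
Your overall architecture coincides with the paper's: the same invariant set, compact in weak-$*$ by Banach--Alaoglu, Tychonoff applied to the time-$T$ flow, and then a ``vanishing-period'' limit to extract a genuine stationary point. In the paper this last step is delegated to Theorem~1.2 of the Escobedo--Mischler--Ricard reference, whereas you re-derive it by hand through the $T_n\downarrow 0$ argument; that part, while somewhat compressed (the passage through the quadratic term requires the weak-$*$ convergence to be uniform over an equicontinuous family of test functions, which holds because all supports sit in the fixed compact $[\varepsilon,2R]$), is workable and your first-moment computation with the cutoff of $\varphi(x)=x$ is in fact cleaner than the paper's mollified $\chi_{M,n}$ argument.

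The genuine gap is in the weak-$*$ continuity of $\mathcal{S}_T$. You invoke ``the Lipschitz dependence of the Banach fixed point of Proposition~\ref{prop:exist_unique sol} on its initial datum.'' That Lipschitz estimate lives in the total variation norm, and TV-Lipschitz dependence does \emph{not} imply weak-$*$ continuity; indeed it says nothing at all about weak-$*$ null sequences. Concretely, if $\Phi_0^n=\delta_{x_n}$ with $x_n\to x$ in $[\varepsilon,2R]$, then $\Phi_0^n\rightharpoonup\delta_x$ weak-$*$ while $\|\Phi_0^n-\delta_x\|_{TV}=2$ for all $n$; your Lipschitz bound gives no information about $\mathcal S_T\Phi_0^n$ versus $\mathcal S_T\delta_x$. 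Since $\mathcal{K}$ is compact only in the weak-$*$ topology, Tychonoff needs continuity in exactly that topology, so this is precisely what must be proved. The reference ``uniformly bounded supports make $\Phi\otimes\Phi$ behave continuously in weak-$*$'' is a true statement about the nonlinearity once one already has weak-$*$ convergence of the trajectories, but it does not produce that convergence. What the paper does to fill this hole is nontrivial: it shows (Lemma~\ref{lem:for weak continuity}) that the backward linearized equation \eqref{eq to solve for uniqueness}, whose coefficient is $\mu_s=\Phi(s,\cdot)+\Psi(s,\cdot)$, admits a unique $C^1$-in-time, $C_c$-in-space solution with prescribed terminal data $\psi$, and then uses this dual solution as a test function so that all interior terms in the difference equation for $\Phi-\Psi$ telescope, yielding a representation of $\langle\psi,\Phi(t)-\Psi(t)\rangle$ in terms of $\Phi_0-\Psi_0$ tested against a continuous function. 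That duality/backward-characteristics argument is the key ingredient for weak-$*$ continuity, and your proposal does not supply it or a substitute for it.
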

Before proceeding with the proof of Proposition \ref{pro:stat solution} we present and prove two auxiliary results.
\begin{lemma}\label{lem:inv region}
Under the assumptions of Proposition \ref{prop: exist time dep}, we have that each solution $\Phi\in C^1([1,T], \mathcal M_{+,b}(\mathbb R_*))$ of \eqref{weak time dep} corresponding to an initial condition $\Phi_0 \in \mathcal M_{+,b}(\mathbb R_*)$ such that 
$ \int_{\mathbb R_*} \xi \Phi_0(d\xi) \leq 1$, 
and to $K_a,\eta_\varepsilon,\zeta_R, \Xi_\varepsilon$ satisfies $\int_{\mathbb R_*} \xi \Phi(t, d\xi) \leq 1$.
\end{lemma}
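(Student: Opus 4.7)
The plan is to test the weak formulation \eqref{weak time dep} against $\varphi(\xi)=\xi$, exploit the fact that the two transport contributions involving $\Xi_\varepsilon$ then cancel, while the truncated coagulation contribution is non-positive, and close with a Gronwall-type argument applied to $M(t):=\int_{\mathbb R_*}\xi\,\Phi(t,d\xi)$.

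The first step is to legitimize $\varphi(\xi)=\xi$ as a test function, since \eqref{weak time dep} is stated only for $\varphi \in C^1([0,T],C^1_c(\mathbb R_*))$. Proposition \ref{prop: exist time dep} asserts that $\Phi(\tau,\cdot)$ is supported in $[\varepsilon,2R]$ for every $\tau\in[0,T]$. Picking any $\psi\in C^1_c(\mathbb R_*)$ with $\psi\equiv 1$ on a neighbourhood of $[\varepsilon,2R]$, I would set $\varphi(s,\xi):=\xi\,\psi(\xi)$; this is admissible and coincides with the identity on the support of $\Phi(s,\cdot)$ and on the set where $\zeta_R(\xi+z)\Phi(s,d\xi)\Phi(s,dz)$ does not vanish (since $\zeta_R(\xi+z)\neq 0$ forces $\xi+z\le 2R$). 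Every integral in \eqref{weak time dep} then takes the same value as if $\varphi(\xi)=\xi$ were used directly.

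Differentiating the resulting identity in $t$, and using $\varphi(\xi)-\xi\partial_\xi\varphi(\xi)\equiv 0$ to cancel the two $\Xi_\varepsilon$-terms exactly, together with the normalization $\int\xi\,\eta_\varepsilon(\xi)\,d\xi=1$, yields
\[
M'(t)=\tfrac{1}{2}\iint K_a(\xi,z)(\xi+z)\bigl(\zeta_R(\xi+z)-1\bigr)\,\Phi(t,d\xi)\Phi(t,dz)\;-\;M(t)\;+\;1.
\]
Since $\zeta_R\le 1$, the double integral is non-positive, so $M'(t)\le 1-M(t)$. Multiplying by $e^t$ gives $\tfrac{d}{dt}\bigl(e^t(M(t)-1)\bigr)\le 0$, hence $M(t)-1\le e^{-t}(M(0)-1)\le 0$ under the hypothesis $M(0)\le 1$, which is exactly the desired invariance.

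The only mildly delicate point is the initial test-function truncation step, and it is handled entirely by the uniform compact support of $\Phi(t,\cdot)$ provided by Proposition \ref{prop: exist time dep}; everything else is a direct algebraic reading of the five terms on the right-hand side of \eqref{weak time dep}.
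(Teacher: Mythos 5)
Your proof is correct, and it takes a genuinely different and cleaner route from the paper's. The paper tests \eqref{weak time dep} against the mollified truncation $\varphi_n^M(\xi)=\xi\chi_{M,n}(\xi)$ and then sends $M\to\infty$, checking term-by-term convergence by dominated convergence; in particular it still invokes the support bound of Proposition \ref{prop: exist time dep} implicitly (to assert $\int_{\mathbb R_*}\xi\,\Phi(t,d\xi)\le 2R\,\Phi(t,\mathbb R_*)<\infty$). You instead use that same uniform support bound $\Phi(\tau,(0,\varepsilon]\cup(2R,\infty))=0$, together with $\operatorname{supp}\eta_\varepsilon\subset[\varepsilon,2\varepsilon]$ and $\zeta_R\equiv0$ on $[2R,\infty)$, to localize the identity exactly with a single fixed cutoff $\psi$, so no limit passage is required. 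Both arguments culminate in the differential inequality $M'(t)\le 1-M(t)$ and the same Gronwall step, so the final conclusion is identical; your version is arguably more economical in this truncated setting, at the cost of leaning fully on the compact-support structure, whereas the paper's mollification argument is the one that generalizes once the truncation is removed and compact support is lost.
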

\begin{proof}
We consider the test function $\varphi_n^M(\xi):= \xi \chi_{M,n}(\xi)$ , with $\chi_{M,n} $ defined by \eqref{mollified char} in equation \eqref{weak time dep} and
we pass to the limit as $M$ tends to infinity in all the terms of \eqref{weak time dep}. 
Firstly, we notice that
\[
 \int_{(M, M+1/n]}\xi g_{M,n}(\xi)   \Phi(t,d\xi)  \rightarrow 0 \text{ as } M \rightarrow \infty
\] 
indeed, for every $\xi \in \mathbb R_*$, we have $\chi_{(M, M+1/n]} (\xi) g_{M,n}(\xi)  \leq 1  $
and 
\[
\int_{\mathbb R_*} \xi \Phi(t, d\xi) \leq 2R  \Phi(t, \mathbb R_*) < \infty. 
\]
Therefore, by Lebesgue's dominated convergence theorem, 
\[
\lim_{M \rightarrow \infty } \int_{\mathbb R_*}\varphi_n^M(\xi) \Phi(t,d\xi )  = \int_{\mathbb R_*} \xi \Phi(t, d\xi).
\] 
A similar argument can be used to prove that 
\[
 \int_{\mathbb R_*} \varphi^M_n (\xi)\eta_\varepsilon(\xi) d\xi ds \rightarrow   \int_{\mathbb R_*} \xi\eta_\varepsilon(\xi) d\xi \text{ as } M \rightarrow \infty. 
\]
If $M > 2R$, then 
\begin{align*}
 \varphi^M_n (\xi+z)\zeta_R(\xi+ z)-  \varphi^M_n(\xi)-  \varphi^M_n(z)= -\xi \chi_{M,n}(\xi)- z \chi_{M,n}(z). 
\end{align*} 
This implies that 
\begin{align*}
\lim_{M \rightarrow \infty} &  \int_0^t \int_{\mathbb R_*} \int_{\mathbb R_*} K_a(\xi,z) [ \varphi^M_n(\xi+z)\zeta_R(\xi+z)- \varphi^M_n(\xi) \\
&-  \varphi^M_n(z)] \Phi(s,d\xi) \Phi(s,dz) ds \leq 0.
\end{align*}
Let us now consider the term 
\[
\int_{[0, M+1/n]}\xi^2  \Xi_\varepsilon (\xi) \partial_\xi \left( \chi_{M,n}(\xi) \right) \Phi(t,d\xi )=  \int_{(M,M+1/n ]}   \Xi_\varepsilon (\xi)\xi^2  \partial_\xi \left( g_{M,n}(\xi) \right) \Phi(t,d\xi ).
\]
Since $g_{M,n}'(x) =  -  c_n  e^{\frac{1}{(n (x-M))^2-1}  } $, by Lebesgue's dominated convergence theorem we can conclude that
\begin{align*}
\lim_{M \rightarrow \infty}  \int_{\mathbb R_*}   \Xi_\varepsilon (\xi)  \chi_{(M, M+1/n]} (\xi) \xi^2  \partial_\xi \left( g_{M,n}(\xi) \right) \Phi(t,d\xi ) = 0. 
\end{align*} 

Passing to the limit as $M \rightarrow \infty $ in equation \eqref{weak time dep}, thanks to the specific form of the truncation for small particles in \eqref{time dep eq with eps_source}, we deduce that
\begin{align*} 
 \partial_t \int_0^\infty x \Phi(t, dx) \leq 1 - \int_0^\infty x \Phi(t,dx).
\end{align*}
Therefore, \eu{ since by assumption $\int_0^\infty x \Phi_0(dx) \leq 1 $, we deduce that}  $ \int_0^\infty x \Phi(t, dx) \leq 1 $ for every $t>0$. 
\end{proof}
\begin{lemma}\label{lem:for weak continuity}
Under the assumptions of Proposition \ref{pro:stat solution}, for every $T>0$ there exists a unique solution $\varphi \in C^1([0,T], C^1_c(\mathbb R_*)) $, with $\varphi(T, \cdot):=\psi \in C^1_c(\mathbb R_*)$, of the equation
\begin{align} \label{eq to solve for uniqueness} 
&\partial_s \varphi(s,x) - \varphi(s,x)  + \frac{2}{1-\gamma} \Xi_\varepsilon(x) \left( \varphi(s,x) - x \partial_x \varphi(s, x)\right) +  \eu{\mathbb L} [\varphi](s,x) =0, 
\end{align} 
 where $\eu{\mathbb L}$ is defined by
\begin{equation}\label{formula L}
\eu{\mathbb L}[\varphi] (s,x):= \frac{1}{2} \int_{\mathbb R_*} K_a(x,y) \left( \varphi(s,x+y) \zeta_R(x+y) - \varphi(s,x) -\varphi(s,y)  \right) \mu_s(dy)
\end{equation}
where $\mu_s=\Phi(s,\cdot)+ \Psi(s,\cdot)$ and $\Phi$ and $\Psi$ are two solutions of \eqref{weak time dep} with initial conditions $\Phi_0$ and $\Psi_0$ respectively.
\end{lemma}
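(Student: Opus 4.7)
The plan is to reduce the backward problem to a forward Cauchy problem by time reversal, express it as a mild equation along the characteristics of the transport term, and solve it by a Banach fixed-point argument. Since \eqref{eq to solve for uniqueness} is linear in $\varphi$ and the nonlocal operator $\mathbb{L}$ is bounded (from boundedness of $K_a$ and finiteness of $\mu_s$), this is the natural construction.

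First I set $u(t,x) := \varphi(T-t,x)$, so that $u(0,\cdot) = \psi$ and \eqref{eq to solve for uniqueness} becomes
\begin{equation*}
\partial_t u + \frac{2x\,\Xi_\varepsilon(x)}{1-\gamma}\, \partial_x u = \left(\frac{2\,\Xi_\varepsilon(x)}{1-\gamma} - 1\right) u + \tilde{\mathbb{L}}[u] \quad \text{ on } [0,T] \times \mathbb{R}_*,
\end{equation*}
where $\tilde{\mathbb{L}}$ is $\mathbb{L}$ with $\mu_s$ replaced by $\tilde\mu_t := \mu_{T-t}$. I introduce the characteristic $X(t,x)$ solving $\dot X = \frac{2}{1-\gamma}\, X\, \Xi_\varepsilon(X)$ with $X(0,x) = x$; since $\Xi_\varepsilon$ is smooth, monotone, and bounded, $X(t,\cdot)$ is a smooth diffeomorphism of $\mathbb{R}_*$ that is the identity on $(0,\varepsilon]$ and equals $x \mapsto x\, e^{2t/(1-\gamma)}$ on $[2\varepsilon, \infty)$. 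Along these characteristics the PDE reduces to a linear ODE with source, which integrates to the Duhamel identity
\begin{equation*}
u(t, X(t,x)) = \Psi(t,0,x)\, \psi(x) + \int_0^t \Psi(t,s,x)\, \tilde{\mathbb{L}}[u]\bigl(s, X(s,x)\bigr)\, ds,
\end{equation*}
with $\Psi(t,s,x) := \exp\bigl(\int_s^t \bigl(\tfrac{2\,\Xi_\varepsilon(X(r,x))}{1-\gamma} - 1\bigr)\, dr\bigr)$, which is uniformly bounded in $(t,s,x)$.

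Next I let $\mathcal{T}$ denote the affine map given by this right-hand side on $C([0,\tau], C_b(\mathbb{R}_*))$ and check that it is a contraction for small $\tau$. The Lipschitz constant of $\tilde{\mathbb{L}}$ on $C_b(\mathbb{R}_*)$ is controlled by $a \cdot \sup_{s \in [0,T]} \|\mu_s\|$, which is finite since $\Phi, \Psi \in C^1([0,T], \mathcal{M}_{+,b}(\mathbb{R}_*))$ satisfy the a priori bound \eqref{eq:bounds for Phi}. Banach's fixed-point theorem yields a unique mild solution on $[0,\tau]$, and iterating from the endpoint extends it to all of $[0,T]$. Regularity is then obtained by differentiating the Duhamel identity in $t$ and $x$: smoothness of $\psi, X, K_a, \zeta_R, \Xi_\varepsilon$, together with the $C^1$-continuity in $s$ of $\mu_s$ inherited from $\Phi, \Psi \in C^1([0,T], \mathcal{M}_{+,b})$, justify differentiation under the integral and bootstrap $u$ to $C^1([0,T], C^1(\mathbb{R}_*))$. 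Undoing the time reversal returns $\varphi$, and uniqueness transfers through this one-to-one correspondence.

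The step I expect to be the main obstacle is confining $\varphi(s,\cdot)$ to be compactly supported in $\mathbb{R}_*$, as demanded by the target space $C^1_c(\mathbb{R}_*)$. The transport and two of the three summands of $\mathbb{L}[\varphi]$ preserve compact support, using that $\mathrm{supp}(\mu_s) \subset [\varepsilon, 2R]$; but the contribution $-\tfrac12 \int K_a(x,y)\, \varphi(s,y)\, \mu_s(dy)$ depends on $x$ only through $K_a(x,\cdot)$, which is bounded from below by $1/a$ and does not decay at infinity. The plan is therefore to carry out the fixed point in $C_b(\mathbb{R}_*)$ first and then verify a posteriori, by tracking supports through the Duhamel iterates using that $X(t,\cdot)$ maps compacta to compacta and that $\mathrm{supp}(\mu_s)$ is uniformly bounded, that if $\mathrm{supp}(\psi) \subset [a_0, b_0] \subset \mathbb{R}_*$ then $\varphi(s,\cdot)$ is supported in a fixed compact subset of $\mathbb{R}_*$ depending only on $T, R, a_0, b_0$. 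Making this support bookkeeping rigorous is the technically delicate part of the proof.
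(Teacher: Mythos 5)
Your plan---integrate along the characteristics of the transport term and close a Banach fixed-point argument in time---is exactly what the paper does; the time reversal and the exponential Duhamel weight are cosmetic variations of the paper's operator $\mathcal T$, which keeps the damping terms inside the integral instead.

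That said, you underestimate the compact-support issue: the a posteriori support bookkeeping you propose cannot succeed, because compact support is genuinely not preserved. The obstruction is precisely the term you single out, $-\tfrac{1}{2}\int K_a(x,y)\,\varphi(s,y)\,\mu_s(dy)$. Since $K_a\ge 1/a$ pointwise and $\mu_s$ is a nontrivial finite measure supported in $[\varepsilon,2R]$, this quantity converges, as $x\to\infty$ and as $x\to 0^+$, to values bounded below in modulus by $\tfrac{1}{2a}\bigl|\int\varphi(s,y)\,\mu_s(dy)\bigr|$, which is nonzero for generic $\psi$; a single Duhamel iterate already spreads the support to all of $\mathbb R_*$. (The paper's own proof does not resolve this: it asserts that $C([0,T],C_c(\mathbb R_*))$ with the sup norm is a Banach space, which it is not, and the fixed-point map fails to preserve even $C_0(\mathbb R_*)$.) The correct repair is the one you almost reach: carry out the fixed point in $C([0,T],C_b(\mathbb R_*))$, bootstrap to $C^1([0,T],C^1_b(\mathbb R_*))$ by differentiating the Duhamel identity, drop the compact-support claim entirely, and note that \eqref{weak time dep} extends from $C^1_c$ to $C^1_b$ test functions by a cutoff-and-dominated-convergence argument because every measure appearing there is supported in the fixed compact set $[\varepsilon,2R]$. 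With this relaxed test-function class, the cancellation used in Step~2 of the proof of Proposition \ref{pro:stat solution} goes through unchanged.
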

\begin{proof}
Equation \eqref{eq to solve for uniqueness} includes a transport term. Therefore we proceed by integrating along the characteristics. 
Since the associated ODE is \eqref{charact ODE}, 
We can rewrite equation \eqref{eq to solve for uniqueness} as 
\[
\frac{d }{ds}  \varphi(s, X(s,y) ) =\varphi(s,X(s,y) )  - \eu{\mathbb L} [\varphi](s,X(s,y)) - \frac{2}{1-\gamma} \Xi_\varepsilon (X(s,y)) \varphi(s,X(s,y))  . 
\] 
We aim now at applying Banach fixed point theorem. 
 We therefore rewrite the equation in a fixed point form $ \varphi(s, X(s,y) ) =\mathcal T [\varphi ] (s,X(s,y) )$
where 
\begin{align*}
&\mathcal T [\varphi ] (s,y)  := \psi(X(t,y)) +  \int_s^t \left(   \frac{2}{1-\gamma} \Xi_\varepsilon (X(r,y))  \varphi(r,X(r,y))-  \varphi(r,X(r,y)) \right)  dr  \\
&+  \int_s^t \eu{\mathbb L} [\varphi](r,X(r,y)) dr. 
\end{align*} 
Recall that $\varphi \in Y:=C([0,T], C_c (\mathbb R_*))$ and $Y$ is a Banach space with the norm $\| \cdot  \|_Y:= \sup_{[0,T]} \sup_{\mathbb R_*} | \cdot |  $. 
We show that the operator $\mathcal T :Y \rightarrow Y$ is a contraction.

Let us consider $\varphi_1, \varphi_2 \in Y$, then 
\begin{align*} 
&  \mathcal T[\varphi_1](s,y) - \mathcal T [\varphi_2](s,y)= 
\int_s^t  (\eu{\mathbb L} [\varphi_1] - \eu{\mathbb L} [\varphi_2]) (r, X(r,y))   dr \\
&+ \int_s^t  \left( \frac{2  \Xi_\varepsilon(X(r,y)) }{1-\gamma}(\varphi_1(r,X(r,y))- \varphi_2(r,X(r,y))) - (\varphi_1(r,X(r,y))- \varphi_2(r,X(r,y))) \right) dr.  
\end{align*} 
\eu{
We start estimating the last term
\begin{align} \label{ineq first term}
 &\left| \int_s^t  \left( \frac{2  \Xi_\varepsilon(X(r,y)) }{1-\gamma}(\varphi_1(r,X(r,y))- \varphi_2(r,X(r,y))) - (\varphi_1(r,X(r,y))- \varphi_2(r,X(r,y))) \right) dr\right| \nonumber \\
 &  \leq T \left| \frac{3-\gamma}{1-\gamma} \right| \|\varphi_1 - \varphi_2 \|_Y 
\end{align}
and, then we analyse the first term
\begin{align*} 
& \int_s^t (\mathbb L [\varphi_1] - \mathbb L [\varphi_2]) (r, X(r,y)) dr \\
& \leq \frac{a}{2} \int_s^t \int_0^\infty \left[\left( \varphi_1-\varphi_2\right)(r,x+X(r,y)) \zeta_R(x+y)- \left(\varphi_1-\varphi_2\right)(r,x) \right. \\
& \left.- \left(\varphi_1-\varphi_2\right)(r,X(r,y))\right] \mu_r(dx) \\
& \leq \frac{3 a}{2}  \| \varphi_1 - \varphi_2 \|_Y \int_s^t \mu_r(\mathbb R_*) dr= \frac{3 a}{2}  \| \varphi_1 - \varphi_2 \|_Y \int_s^t  (\Phi(r, \mathbb R_*) + \Psi(r, \mathbb R_*) )dr. 
\end{align*} 
To prove that $\mathcal T$ is a contraction, we need to control the quantity 
\[
\int_s^t (\Phi(r, \mathbb R_*) + \Psi(r, \mathbb R_*) )dr. 
\] }

Thanks to \eqref{eq:bounds for Phi} and to the fact that $\gamma <1$, we know that for every $r>0$
\[
\Psi(r, \mathbb R_*) \leq e^{r \frac{1+\gamma}{1-\gamma}}\left( \|\Psi_0 \| + e^r \|\eta_\varepsilon \| \right) \leq e^{r \frac{2}{1-\gamma}}\left( \|\Psi_0 \| + \|\eta_\varepsilon \| \right) 
\] 
and 
\[
 \Phi(r, \mathbb R_*) \leq  e^{r \frac{1+\gamma}{1-\gamma}}\left( \|\Phi_0 \| +e^r \|\eta_\varepsilon \| \right) \leq e^{r \frac{2}{1-\gamma}}\left( \|\Phi_0 \| + \|\eta_\varepsilon \| \right) .
\]

Therefore,
\begin{align*} 
& \int_s^t (\Phi(r,\mathbb R_*) + \Psi(r,\mathbb R_*) ) dr \leq 2 \left(\max\{\|\Psi_0 \| ,\|\Phi_0 \|  \}  + \|\eta_\varepsilon \| \right) \int_s^t   e^{r\frac{2}{1-\gamma}} dr \\
&=2 \left( \max\{\|\Psi_0 \| ,\|\Phi_0 \|  \}  +\|\eta_\varepsilon \| \right) \frac{1-\gamma}{3-\gamma}\left(  e^{t \frac{3-\gamma}{1-\gamma} } -  e^{s \frac{3-\gamma}{1-\gamma}}\right).
\end{align*}  
Since for any $\alpha >0$ and $1> t > s >0$ then $e^{\alpha t}- e^{\alpha s} \leq e^{\alpha t}- 1 \leq t (e^\alpha-1)$,
we conclude, \eu{recalling \eqref{ineq first term},} that for every $T \geq 0$
\begin{align*} 
& \| \mathcal T[\varphi_1] - \mathcal T [\varphi_2] \|_Y \leq\| \varphi_1 - \varphi_2 \|_Y \left( \eu{\left| \frac{3-\gamma}{1-\gamma} \right|| }T
+  \frac{3 a}{2}  \int_s^t (\Phi(\mathbb R_*, r) + \Psi(\mathbb R_*, r) )dr \right) \\
& \leq c T \| \varphi_1 - \varphi_2 \|_Y, 
\end{align*} 
where
\[
c:=\eu{\left| \frac{3-\gamma}{1-\gamma} \right|}+ 3 a  \frac{1-\gamma}{3-\gamma} \left( e^{\frac{3-\gamma}{1-\gamma}}-1 \right) \left( \max\{\|\Psi_0 \| ,\|\Phi_0 \|  \} +\|\eta_\varepsilon \| \right). 
\]
If $T< \min\left\{ \frac{1}{c}, 1\right\}$, then the operator $\mathcal T$ is a contraction. 
By Banach fixed point theorem we conclude there exists a unique solution to the equation $\varphi(t, \cdot) = \mathcal T \varphi (\cdot, t )$ for $t \in [0,  T]$.

Since $\mathcal T \varphi(\cdot, x) $ defines a differentiable function for every $x \in \mathbb R_*$ we conclude that $\varphi \in C^1([0, T ], C_c(\mathbb R_*))$. 
We can now extend the existence of a fixed point for the operator $\mathcal T$ on $C^1([0,T], C_c(\mathbb R_*))$ for an arbitrary $T>0.$ 
For this it is enough to notice that, since $c$ and $T$ do not depend on the initial condition $\psi$, then we can iterate the argument and deduce that there exists a unique solution of \eqref{eq to solve for uniqueness}. 

\end{proof}
\begin{proof}[Proof of Proposition \ref{pro:stat solution}]
We introduce the semigroup $\{S(t)\}_{ t \geq 0 }  $ with values in $\mathcal M_{+,b}(\mathbb R_*)$ and defined by $S(0)\Phi_0(\cdot) =\Phi_0(\cdot) $  and $S(t) \Phi_0(\cdot)=\Phi(t, \cdot )$, where $\Phi$ is the solution of \eqref{weak time dep} with respect to $K_a$, $\Xi_\varepsilon$, $\eta_\varepsilon$, $\zeta_R$ and the initial condition $\Phi_0$, defined by \eqref{eq:Phi function of f}. 
Namely, $\Phi $ is the measure defined by equality \eqref{eq:Phi function of f} for every $\varphi \in C_c(\mathbb R_*)$ with $f$ being the unique solution of the fixed point equation \eqref{fixed point eq}

We split the proof into steps: first of all we show the existence of a \eu{weak$-*$} compact invariant region. Then we prove the \eu{weak$-*$} continuity of the operator $\Phi_0 \mapsto S(t) \Phi_0. $
By Tychonoff fixed point theorem we conclude that for every $t>0$ the operator $S(t)$ has a fixed point $\hat{\Phi}_t$. As a last step, we show that a steady state of \eqref{weak time dep} \eu{as in Definition \ref{steady state eq},} can be obtained from $\hat{\Phi}_t$ by passing to the limit as $t$ goes to zero.\\

\textbf{Step 1: Existence of an invariant region} 

Let us consider the set $P \subset \mathcal M_{+,b}(\mathbb R_*)$ defined by
\[
P :=\left\{ H \in \mathcal M_{+,b}(\mathbb R_*) : H((0, \varepsilon] \cup (2R, \infty ))=0, \int_0^\infty x H(dx) \leq 1\right\}. 
\]

Notice that
$
P \subset B\left(0,\frac{1}{\varepsilon}\right) := \left\{ H \in \mathcal M_+(\mathbb R_*) : \| H \| \leq \frac{1}{\varepsilon} \right\}
$
\eu{where $\| \cdot\|$ is the total variation norm.}
By Banach-Alaoglu theorem we conclude that $P$ is compact in the \eu{weak$-*$} topology, since it is a closed subset of the set $B\left(0,\frac{1}{\varepsilon}\right) $, which is compact in the \eu{weak$-*$} topology. 

Proposition \ref{prop: exist time dep} implies that if $ \Phi_0((0, \varepsilon] \cup (2R, \infty))=0$, then $S(t) \Phi_0((0, \varepsilon] \cup (2R,\infty))=0$.  
By Lemma \ref{lem:inv region} we conclude that $P$ is an invariant region.

\textbf{Step 2: \eu{weak$-*$} continuity} 

To be able to apply Tychonoff fixed point theorem, we need  to check that the map $\Phi_0 \mapsto S(t) \Phi_0 $
is continuous in the \eu{weak$-*$} topology. 

To this end, it is enough to show that, for every test function $\psi \in  C_c(\mathbb R_*)$, 
\begin{align} \label{eq for uniqueness}
\int_{\mathbb R_*} \psi(x) ( \Phi- \Psi)(t,dx)  =  \int_{\mathbb R_*} \psi(x) ( \Phi_0- \Psi_0)(dx). 
\end{align}  
where $\Phi$ and $\Psi$ are \eu{two solutions of \eqref{weak time dep}} corresponding to the two initial conditions $\Phi_0$ and $\Psi_0$, respectively.

With this aim, we notice that the measure $(\Phi- \Psi)(t,\cdot)$ is a solution of 
\begin{align*} 
& \int_{\mathbb R_*} \varphi(t,x) ( \Phi- \Psi)(t,dx) = \int_{\mathbb R_*} \varphi(t,x) ( \Phi_0- \Psi_0)(dx)\\
&+  \int_0^t \int_{\mathbb R_*} \partial_s \varphi(s,x) ( \Phi- \Psi)(s,dx) ds + \int_0^t \int_{\mathbb R_*} \mathcal L [\varphi](x,s) ( \Phi- \Psi)(s,dx) ds   \\
& + \ \int_0^t \int_{\mathbb R_*}\left[  \frac{2 \Xi_\varepsilon(x)}{1-\gamma} \left(  \varphi(s, x) -   \partial_x \varphi(s, x) x \right)  - \varphi(x) \right] ( \Phi- \Psi)(s,dx)ds  \\
\end{align*} 
where $\mathcal L $ is given by \eqref{formula L}. 
By Lemma \ref{lem:for weak continuity} we conclude that \eqref{eq for uniqueness} holds and, hence, the map $\Phi_0 \mapsto S(t) \Phi_0 $ is \eu{weak$-*$} continuous. 

\textbf{Step 3: Time continuity} 

The function $S(t)$ has a fixed point $\hat{\Phi}_t$  for every time $t \geq 0.$
We now show that the map 
\begin{equation} \label{cont S}
t \rightarrow S(t) \Phi_0 
\end{equation}
is \eu{weak$-*$} continuous for any $\Phi_0 $ in $\mathcal M_+(\mathbb R_*)$. 

Since $\Phi $ solves \eqref{weak time dep}, for every $\varphi \in C^1_c (\mathbb R_*)  $ it holds that
\begin{align*}
& \int_0^\infty  \varphi(x) [\Phi(\tau_1 , dx )- \Phi(\tau_2, dx )] = 
\int_{\tau_1}^{\tau_2} \int_{\mathbb R_*} \varphi(x) \Phi(\tau,dx) \\
& - \frac{2}{1-\gamma} \int_{\tau_1}^{\tau_2} \int_{\mathbb R_*} \Xi_\varepsilon(x) \partial_x \varphi(x) x \Phi(\tau,dx) d\tau +\int_{\tau_1}^{\tau_2} \int_{\mathbb R_*} \varphi(x)\eta_{\varepsilon}(dx)   \\
& +  \frac{1}{2} \int_{\tau_1}^{\tau_2} \int_{\mathbb R_*} \int_{\mathbb R_*} K_a(x,y) [\varphi(x+y)-\varphi(x) -\varphi(y) ] \Phi(\tau, dx ) \Phi(\tau, dy)d\tau  \\
& + \frac{2}{1-\gamma} \int_{\tau_1}^{\tau_2} \int_{\mathbb R_*} \Xi_\varepsilon(x)  \varphi(x) x \Phi(\tau,dx) d\tau \leq  (\tau_2-\tau_1) C(\eta_\varepsilon, \varphi, \gamma) + (\tau_2-\tau_1)^2 C(a, \varphi, \gamma) 
\end{align*}
where $C(\eta_\varepsilon, \varphi, \gamma) $ and $C(a, \varphi, \gamma) $ are positive constants. 
Therefore, the function \eqref{cont S}
 is continuous in the \eu{weak$-*$} topology. 

Since the set $P$ is compact and metrizable and the map \eqref{cont S} is continuous, we conclude by Theorem 1.2 in \cite{escobedo2005self}, that there exists a measure $\hat{\Phi}$ such that $S(t) \hat{\Phi} = \hat{\Phi}.$ The measure $\hat{\Phi}$ is a solution of equation \eqref{stationary solution trunc}.

Finally, $\Phi \neq 0$ because $0$ does not solve \eqref{stationary solution trunc}, whence $\int_0^\infty x \Phi(dx) >0.$
\end{proof}
\subsection{Properties of the steady state solutions} \label{sec:properties steady state}
We state now some important properties of the solutions of equation \eqref{stationary solution trunc}. 
\begin{lemma}[Regularity] \label{lem:regularity}
If $\Phi\in \mathcal M_+(\mathbb R_*)$ \eu{is a solution of equation \eqref{stationary solution trunc}} with respect to $K_a$, $\eta_\varepsilon$, $\Xi_\varepsilon$ and $\zeta_R$, as in \eqref{steady state eq}, then $\Phi \ll \mathcal L$. 
\end{lemma}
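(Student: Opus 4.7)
The plan is to reinterpret \eqref{stationary solution trunc} as a distributional identity on $\mathbb{R}_*$ that identifies the distributional derivative of the Radon measure $x\Xi_\varepsilon(x)\Phi$ as another signed Radon measure. A classical fact from distribution theory then forces this measure to be absolutely continuous with respect to $\mathcal{L}$, and dividing by the smooth positive weight $x\Xi_\varepsilon$ on the support of $\Phi$ yields $\Phi\ll\mathcal{L}$.

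First I would rearrange \eqref{stationary solution trunc} to isolate the transport term $\int x\varphi'(x)\Xi_\varepsilon(x)\Phi(dx)$ on one side. Using the symmetry of $K_a$, the coagulation functional can be rewritten as
\begin{align*}
&\int\!\!\int \tfrac{K_a(x,y)}{2}\bigl[\zeta_R(x+y)\varphi(x+y)-\varphi(x)-\varphi(y)\bigr]\Phi(dx)\Phi(dy) \\
&\qquad = \tfrac{1}{2}\int \zeta_R(z)\varphi(z)\,\nu(dz) \;-\; \int \varphi(x)\,A(x)\,\Phi(dx),
\end{align*}
where $\nu$ is the pushforward of the finite measure $K_a(x,y)\Phi(dx)\Phi(dy)$ on $\mathbb{R}_*^2$ under $(x,y)\mapsto x+y$, and $A(x):=\int K_a(x,y)\Phi(dy)$ is bounded and continuous (since $K_a$ is bounded and continuous, and $\Phi$ is a finite compactly supported measure by Proposition \ref{pro:stat solution}). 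Both $\nu$ and $A\Phi$ are therefore finite Radon measures on $\mathbb{R}_*$.

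Next, recognizing $\int x\varphi'(x)\Xi_\varepsilon(x)\Phi(dx) = -\langle(x\Xi_\varepsilon\Phi)',\varphi\rangle$ by the very definition of the distributional derivative, the steady-state equation becomes
\begin{equation*}
\tfrac{2}{1-\gamma}\,(x\Xi_\varepsilon\Phi)' \;=\; \Phi \;-\; \tfrac{2\Xi_\varepsilon}{1-\gamma}\Phi \;-\; \eta_\varepsilon(x)\,dx \;-\; \tfrac{\zeta_R}{2}\,\nu \;+\; A\Phi \qquad \text{in } \mathcal{D}'(\mathbb{R}_*),
\end{equation*}
which is valid for all $\varphi\in C^\infty_c(\mathbb{R}_*)\subset C^1_c(\mathbb{R}_*)$. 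Since every term on the right-hand side is a (signed) Radon measure, the distribution $(x\Xi_\varepsilon\Phi)'$ is in fact a signed Radon measure.

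At this stage I would apply the following standard distribution-theoretic fact: a nonnegative Radon measure $\mu$ on an open interval $I\subset\mathbb{R}$ whose distributional derivative is itself a Radon measure must be absolutely continuous with respect to Lebesgue measure. (Heuristically, letting $F(x):=\mu((-\infty,x]\cap I)$ be the cumulative function, one has $F''=$ measure distributionally; hence $F'$ is a $BV$ function, and the measure $F'=\mu$ admits an $L^1_{loc}$ density.) Applied to $\mu=x\Xi_\varepsilon\Phi$, this yields $x\Xi_\varepsilon\Phi\ll\mathcal{L}$. Since Proposition \ref{pro:stat solution} places $\supp\Phi\subset(\varepsilon,2R]$, on which $x\Xi_\varepsilon(x)$ is smooth and strictly positive, we conclude $\Phi=(x\Xi_\varepsilon)^{-1}\cdot(x\Xi_\varepsilon\Phi)\ll\mathcal{L}$. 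The main obstacle is the distribution-theoretic step of ruling out a singular continuous part of $x\Xi_\varepsilon\Phi$: atoms are immediately excluded because the distributional derivative of a Dirac mass is a $\delta'$-type object, not a Radon measure, but the Cantor-type case needs an argument by testing against suitably rescaled $C^1_c$ bumps to show that the derivative fails to be bounded as a functional on $C_c$, contradicting it being a Radon measure. The rest of the argument is formal rearrangement of \eqref{stationary solution trunc}.
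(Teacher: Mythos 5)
Your proof is correct and takes a genuinely different route from the paper's. The paper tests \eqref{stationary solution trunc} against the specific function $\varphi(x) = -\int_{(x,\infty)} z^{-1}\chi(z)\,dz$, which turns the transport term into $-\frac{2}{1-\gamma}\int \Xi_\varepsilon(x)\chi(x)\,\Phi(dx)$ while controlling all other terms by $\|\varphi\|_\infty \leq \|1/z\|_{L^p(\mathcal K)}\|\chi\|_{L^q(\mathcal K)}$; density of $C_c$ in $L^q$ then shows $\Xi_\varepsilon\Phi$ has an $L^p_{loc}$ density for every $p<\infty$. You instead read \eqref{stationary solution trunc} as a distributional identity expressing $(x\Xi_\varepsilon\Phi)'$ as a signed Radon measure and invoke the classical fact that a nonnegative Radon measure on an interval whose distributional derivative is again a Radon measure must be absolutely continuous. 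Your route is structurally cleaner (no special test function needed) and yields a slightly stronger conclusion: the density of $x\Xi_\varepsilon\Phi$ is $BV_{loc}$, hence $L^\infty_{loc}$, rather than merely $L^p_{loc}$. One remark on the reservation you raise at the end: no case-by-case analysis over atoms versus Cantor-type singular parts is needed, because the fact you invoke is proved uniformly in two lines. Set $\sigma := (x\Xi_\varepsilon\Phi)'$ and let $G$ be a cumulative function of $\sigma$, so $G\in BV_{loc}(\mathbb R_*)$ and $G'=\sigma$ distributionally; then $(x\Xi_\varepsilon\Phi - G)'=0$ in $\mathcal D'(\mathbb R_*)$, so $x\Xi_\varepsilon\Phi = G + \text{const.}$ as distributions, and a Radon measure that coincides as a distribution with an $L^1_{loc}$ function is that function times $\mathcal L$, giving $x\Xi_\varepsilon\Phi \ll \mathcal L$. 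Both your proof and the paper's then invoke the support property $\Phi((0,\varepsilon])=0$ from Proposition \ref{pro:stat solution} to pass from absolute continuity of the weighted measure to $\Phi\ll\mathcal L$.
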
 
\begin{proof}
We follow a similar strategy to the one used in \cite{kierkels2016self}.
If we consider the test function
\begin{equation} \label{test function regularity}
\varphi(x):=- \int_{(x, \infty ) } \ \frac{1}{z} \chi(z) dz,
\end{equation}
with $\chi \in C_c(\mathbb R_*)$  in \eqref{stationary solution trunc}, then we obtain that, for any $p, q \in \mathbb R_*$ such that $1/p + 1/q=1$, 
\begin{align*}
  \frac{2}{1-\gamma} \left| \int_{\mathbb R_*} \Xi_\varepsilon (x) \chi(x) \Phi (dx)\right| & \leq \left( \eta_\varepsilon ({\mathbb R_*})+ \frac{\left| 1+\gamma\right|}{1-\gamma}\Phi({\mathbb R_*})   +  \frac{3a}{2} \Phi(\mathbb R_*)^2  \right)  \left\| \frac{1}{z} \right\|_{L^p(\mathcal K )} \|\chi \|_{L^q(\mathcal K )} 
\end{align*}
where $\mathcal K $ is the support of $\chi$.
By the density of $C_c(\mathbb R_*)$  in $L^q(\mathbb R_*)$, we conclude that for any $q < \infty$ and any compact set $\mathcal K $ 
\begin{align*}
 \left| \int_{\mathbb R_*} \chi(x) \Xi_\varepsilon (x) \Phi (dx)\right| \leq  C(\mathcal K, \gamma, \varepsilon, \Phi) \|\chi \|_{L^q(\mathcal K)} \quad \forall \chi \in L^q(\mathcal K).
\end{align*} 
This implies that the measure $ \Xi_\varepsilon (x)  \Phi(dx) $ is absolutely continuous with respect to the Lebesgue measure. Thus $\Phi$
is absolutely continuous with respect to the Lebesgue measure on $(0,\varepsilon)$ and since $\Phi((0,\varepsilon])=0$ we deduce that  hence $\Phi \ll \mathcal L $. 
\end{proof}

 \begin{lemma}\label{lem:flux a_eps_R}
\eu{Every steady state solution $\Phi$ of \eqref{weak time dep}, defined as in Definition \ref{steady state eq}}, with density $\phi$, corresponding to $K_a$, $\eta_\varepsilon$, $\Xi_\varepsilon$ and $\zeta_R $, satisfies the following inequality
\begin{align} \label{eq:truncated flux}
\int_0^z \int_{z-x}^\infty K_a(x,y) x \phi(x) \phi(y) dy dx   \leq  \int_0^z  x \eta_\varepsilon(dx) + \frac{2}{1-\gamma}  z^2 \phi (z), 
\end{align}
for almost every $z >0.$
\end{lemma}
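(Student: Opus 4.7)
The plan is to insert the mollified test function $\varphi_n(x) := x\,\chi_{z,n}(x)$, with $\chi_{z,n}$ as in \eqref{mollified char}, into the weak stationary equation \eqref{stationary solution trunc}, and then pass to the limit $n\to\infty$. For $n$ large enough that $1/n < \varepsilon/2$, the function $\varphi_n$ vanishes near $0$ to any desired precision, so (possibly after multiplying by a smooth cutoff $\eta\in C_c^\infty(\mathbb R_*)$ that equals $1$ on the supports of $\Phi$ and $\eta_\varepsilon$) it qualifies as a test function in \eqref{stationary solution trunc} and this auxiliary cutoff leaves every integral unchanged.

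With this choice one has $\varphi_n(x) - x\varphi_n'(x) = -x^2\chi_{z,n}'(x)$, and since $\chi_{z,n}'$ approximates $-\delta_z$ (its other peak, near $0$, is invisible to $\Phi$ and to $\eta_\varepsilon$), Lebesgue's differentiation theorem applied to the locally integrable density $x^2\,\Xi_\varepsilon(x)\phi(x)$ gives, for a.e.\ $z>0$,
\[
\frac{2}{1-\gamma}\int_{\mathbb R_*}\Xi_\varepsilon(x)\bigl[\varphi_n(x) - x\varphi_n'(x)\bigr]\phi(x)\,dx\ \longrightarrow\ \frac{2}{1-\gamma}\,z^2\,\Xi_\varepsilon(z)\,\phi(z).
\]
The left-hand side of \eqref{stationary solution trunc} and the source term converge by dominated convergence to $\int_0^z x\phi(x)\,dx$ and $\int_0^z x\,\eta_\varepsilon(dx)$, respectively.

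For the coagulation term, the integrand is uniformly bounded in $n$ (because $K_a$ is bounded and, by Proposition \ref{pro:stat solution}, $\phi$ is supported in $[\varepsilon, 2R]$), so dominated convergence gives the pointwise limit
\[
\frac{1}{2}\iint K_a(x,y)\bigl[\zeta_R(x+y)(x+y)\chi_{(0,z]}(x+y) - x\chi_{(0,z]}(x) - y\chi_{(0,z]}(y)\bigr]\phi(x)\phi(y)\,dx\,dy.
\]
Splitting the domain according to whether $x,\, y$ and $x+y$ are $\leq z$ or $>z$, exploiting the symmetry of $K_a$ and the nonnegativity of $\varphi_n$ together with $0\leq \zeta_R\leq 1$ (so $\zeta_R(x+y)(x+y)-(x+y)\leq 0$ on the region $\{x+y\leq z\}$), one bounds this limit above by $-J_\phi(z)$, where $J_\phi(z)$ denotes the left-hand side of \eqref{eq:truncated flux}.

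Substituting into the stationary identity and rearranging yields, for a.e.\ $z>0$,
\[
J_\phi(z)\ \leq\ \int_0^z x\,\eta_\varepsilon(dx) + \frac{2}{1-\gamma}\,z^2\,\Xi_\varepsilon(z)\,\phi(z)\ -\ \int_0^z x\phi(x)\,dx.
\]
Since $\int_0^z x\phi(x)\,dx\geq 0$ and $\Xi_\varepsilon(z)\leq 1$, dropping the last term and using $z^2\phi(z)\geq 0$ gives \eqref{eq:truncated flux}. The main technical point is the Lebesgue-point argument that extracts $z^2\Xi_\varepsilon(z)\phi(z)$ from the transport contribution, together with the careful case-decomposition showing that the truncation $\zeta_R$ can only sharpen the flux bound rather than weaken it.
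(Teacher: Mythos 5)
Your proposal is correct and takes essentially the same route as the paper: plug the mollified test function $\varphi_n^z(x)=x\,\chi_{z,n}(x)$ into \eqref{stationary solution trunc}, pass to the limit term by term, extract $\frac{2}{1-\gamma}\Xi_\varepsilon(z)z^2\phi(z)$ from the transport term at (a.e.) Lebesgue points of $x^2\Xi_\varepsilon\phi$, and use $\zeta_R\leq 1$ to replace the truncated coagulation limit by $-J_\phi(z)$. The only cosmetic differences are that you carry out the case-by-case computation of the coagulation limit directly, while the paper cites Lemma 2.7 of \cite{ferreira2019stationary} for the $\zeta_R\equiv 1$ limit, and that you invoke Lebesgue's differentiation theorem where the paper uses $L^1$-convergence of mollified functions followed by subsequence extraction; these are equivalent here.
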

\begin{proof}
If we consider the test function $\varphi^n_z(x) = x \chi_{z,n}(x)$, with $\chi_{z,n}$ given by formula \eqref{mollified char}, in equation \eqref{stationary solution trunc}, we obtain that
\begin{align*}
& \int_0^{ z+ \frac{1}{n}} \varphi^n_z(x) \eta_\varepsilon (x) dx  - \int_0^{ z+\frac{1}{n}} \varphi^n_z(x)  \phi(x) dx - \frac{2}{1-\gamma} \int_0^{ z+\frac{1}{n}} \Xi_\varepsilon(x)  x  {\varphi^n_z}^{'}(x) \phi(x)  dx \\
& + \frac{2}{1-\gamma} \int_0^{ z+\frac{1}{n}} \Xi_\varepsilon(x)   {\varphi^n_z}(x) \phi(x)  dx \\
& =- \frac{1}{2} \int_0^{ z+\frac{1}{n}}\int_0^{z+\frac{1}{n}} K_a(x,y) [ \zeta_R (x+y) \varphi^n_z(x+y) - \varphi^n_z (x) - \varphi^n_z (y) ] \phi(x) dx \phi(y) dy .
 \end{align*}
Applying Lebesgue's dominated convergence theorem we prove that 
\[
\lim_{n \rightarrow \infty } \int_0^{\infty } \varphi^n_z(x)  \eta_\varepsilon (dx)  =\int_0^z  x \eta_\varepsilon (dx),  
\lim_{n \rightarrow \infty } \int_0^{\infty } \varphi^n_z (x) \phi(x)  dx = \int_0^z x \phi(x)  dx
\] 
and 
\[
\lim_{n \rightarrow \infty } \int_0^{\infty } \Xi_\varepsilon (x) \varphi^n_z (x) \phi(x) dx = \int_0^z \Xi_\varepsilon (x) x \phi(x)  dx . 
\]

 We now aim at showing that 
\begin{align*}
 \int_0^{ z+1/n} \Xi_\varepsilon(x) x \left( x \chi_{z,n}(x) \right)' \phi(x) dx \rightarrow \int_0^z  \Xi_\varepsilon(x)  x \phi(x) dx -  \Xi_\varepsilon(z) z^2 \phi(z),
\end{align*}
as $n \rightarrow \infty. $
Notice that $\chi'_{z,n} (x)= \rho_n(x-z)$, where $\rho_n$ are the mollifiers introduced in Section \ref{sec:steady state}.
As a consequence of the properties of the mollifiers, we know that for every $f \in L^1(\mathbb R_*)$
\[
\|\rho_n * f - f\|_1 \rightarrow 0 \text{ as } n \rightarrow \infty 
\]
where with $\| \cdot \|_1$ we denote the $L^1$ norm and with $* $ the classical convolution product. 
This implies that, up to a subsequence, 
\[
\int_{\mathbb R_*} \Xi_\varepsilon(x) x^2 \phi(x) \chi'_{z,n} (x) dx =\int_{\mathbb R_*}  \Xi_\varepsilon(x)  x^2 \phi(x) \rho_n(x-z) dx \rightarrow  \Xi_\varepsilon(z) z^2 \phi(z) \text{ a.e }
\]
as $n$ goes to infinity.

On the other hand, it is possible to prove as in \cite{ferreira2019stationary} (proof of Lemma 2.7) that 
\begin{align*}
 & - \lim_{n \rightarrow \infty }  \int_0^{ z+ \frac{1}{n}}\int_0^{ z + \frac{1}{n}} K_a(x,y) [\varphi^n_z(x+y)  -\varphi^n_z(x) -\varphi^n_z(y)]  \phi(x) dx\phi(y) dy \\
&= \int_0^z \int_{z-x}^\infty K_a(x,y) x \phi(x) dx \phi(y) dy  
\end{align*} 
as $n$ goes to infinity. 
Since, by definition of the truncation we have $\zeta_R \leq 1 $, then
\begin{align*}
 & - \int_0^{ z+ \frac{1}{n}}\int_0^{ z + \frac{1}{n}} K_a(x,y) [ \zeta_R(x+y) \varphi^n_z(x+y) - \varphi^n_z(x) - \varphi^n_z(y) ] \phi(x) dx\phi(y) dy \\
 &\geq -  \int_0^{ z+ \frac{1}{n}}\int_0^{ z + \frac{1}{n}} K_a(x,y) [\varphi^n_z (x+y)  - \varphi^n_z(x) - \varphi^n_z (y) ] \cdot \phi(x) dx\phi(y) dy, 
\end{align*} 
the statement of the lemma follows. 
\end{proof}

To prove the estimates for the solutions of \eqref{stationary solution trunc}, Lemma 2.10 of \cite{ferreira2019stationary} will be useful. 
We recall the statement here. 
\begin{lemma} \label{ferreira lemma 2.10}
Suppose $d>0$ and $b \in (0,1)$ and assume that $L \in (0,\infty] $ is such that $L \geq d.$
Consider some $\mu \in \mathcal M_+(\mathbb R_*)$ and $\varphi \in C(\mathbb R_*)$, with $\varphi \geq 0.$
\begin{enumerate}
\item  Suppose $L  < \infty$, and assume that there is $g \in L^1([d,L])$ such that $g \geq 0$ and
\begin{equation} \label{bound lemma ferreira g}
\frac{1}{z} \int_{[bz,z]} \varphi(x) \mu(dx) \leq g(z), \text{ for } z \in [d, L]. 
\end{equation} 
Then
\[
\int_{[d,L]} \varphi(x) \mu(dx) \leq \frac{\int_{[d,L]} g(z) dz}{|\ln b |} +Lg(L). 
\]
\item If $L=\infty$ and there is $g \in L^1([d,\infty) )$ such that $g \geq 0$ and
\[
\frac{1}{z}\int_{[bz,z]} \varphi(x) \mu(dx)  \leq g(z) \text{ for } z \geq d, 
\]
then
\[
\int_{[d, \infty)} \varphi(x) \mu(dx) \leq \frac{\int_{[d, \infty )} g(z) dz }{|\ln b|}. 
\] 
\end{enumerate} 
\end{lemma}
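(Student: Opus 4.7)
The plan is to apply Fubini's theorem after integrating the hypothesis in $z$ against the measure $dz/z$, whose scale-invariance is tailored to the self-similar intervals $[bz,z]$. I would first rewrite the hypothesis as $\int_{[bz,z]} \varphi(x)\mu(dx) \leq z\, g(z)$, divide by $z$, and integrate over $z \in [d,L]$, obtaining
\[
\int_d^L \frac{1}{z} \int_{[bz,z]} \varphi(x)\mu(dx)\, dz \leq \int_d^L g(z)\, dz.
\]

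Next I would apply Fubini on the left. Since $\{(z,x) : bz \leq x \leq z\} = \{(z,x) : x \leq z \leq x/b\}$, the left-hand side becomes
\[
\int_{\mathbb{R}_*} \varphi(x) \left(\int_{\max(d,x)}^{\min(L,\, x/b)} \frac{dz}{z}\right) \mu(dx),
\]
with the convention that the inner integral vanishes whenever the interval is empty. For $x$ in the \emph{bulk} region $[d, bL]$ (assuming $bL \geq d$), both endpoints simplify: $\max(d,x) = x$ and $\min(L, x/b) = x/b$, so the inner integral equals exactly $\ln(1/b) = |\ln b|$. Restricting the outer $\mu$-integral to this region therefore gives $|\ln b|\, \int_{[d,bL]} \varphi\, d\mu \leq \int_d^L g(z)\, dz$.

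It remains to estimate the boundary sliver $x \in (bL, L]$, and this is where the extra term $L g(L)$ enters: applying the hypothesis directly at $z = L$ gives $\int_{[bL, L]} \varphi\, d\mu \leq L g(L)$. Splitting $\int_{[d,L]} \varphi\, d\mu = \int_{[d,bL]} \varphi\, d\mu + \int_{(bL, L]} \varphi\, d\mu$ and adding the two estimates yields part (1). The degenerate case $bL < d$ is subsumed by the boundary bound alone, since then $[d, L] \subset [bL, L]$ and the $L g(L)$ term already dominates the left-hand side.

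For part (2), with $L = \infty$, every $x \geq d$ automatically lies in the bulk because $x/b > x \geq d$, so Fubini produces $|\ln b|\, \int_{[d,\infty)} \varphi\, d\mu \leq \int_d^\infty g(z)\, dz$ directly, with no boundary correction. The only care required in the whole argument is the elementary endpoint bookkeeping at $d$ and $L$; there is no substantial obstacle, since the hypothesis at the single value $z = L$ is already strong enough to absorb the boundary piece $(bL, L]$ on its own.
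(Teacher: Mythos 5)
Your proof is correct. Note that the paper itself does not contain a proof of this statement: it is quoted verbatim as Lemma 2.10 from the reference \cite{ferreira2019stationary} ``for the reader's convenience,'' so there is no internal argument to compare against. That said, your argument is complete and sound: Tonelli applies because $\varphi, g, \mu$ are nonnegative, the swap correctly identifies the inner $z$-domain as $[\max(d,x),\,\min(L,x/b)]$, the inner integral equals exactly $|\ln b|$ for $x$ in the bulk $[d,bL]$, the sliver $(bL,L]$ is absorbed by the hypothesis evaluated at $z=L$, and you correctly handle the degenerate case $bL < d$. The vanishing of the $Lg(L)$ term in case (2) is also exactly what this structure predicts, since when $L=\infty$ every $x \geq d$ lies in the bulk.
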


\begin{lemma} \label{lem:bounds eps_a_R}
The density $\phi$ of every solution of equation \eqref{stationary solution trunc} with respect to $K_a$, $\Xi_\varepsilon$, $\eta_\varepsilon$ and $\zeta_R$ satisfying \eqref{ineq:first mom trunc} and \eqref{support steady state}, is such that
\begin{equation} \label{decay bound best}
 \frac{1}{z} \int_{{8z}/{9}}^z \phi(x) dx \leq C  \left(\frac{a }{z^3}\right)^{1/2} \quad z \in[0, 2R]
\end{equation}
for some $C>0$ independent of $\varepsilon,a, R, $
and such that
\begin{equation} \label{decay bound a_eps_R}
\int_y^\infty \phi (x) dx \leq C_{a,\varepsilon} y^{-1/2}, \quad y \in[1,2R]
\end{equation} 
for a positive constant $C_{a,\varepsilon} >0$ independent of $R.$
\end{lemma}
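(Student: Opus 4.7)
\emph{Proof plan.} Both bounds are consequences of the flux inequality of Lemma \ref{lem:flux a_eps_R} combined with the mass constraint \eqref{ineq:first mom trunc}.

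\textbf{Step 1 (Key pointwise inequality).} In the inequality \eqref{eq:truncated flux}, I restrict the double integral to the square $(x,y) \in [\tfrac{8z}{9}, z]^2$. For such pairs one has $x + y \geq \tfrac{16z}{9} > z$, so the constraint $y > z - x$ is automatically satisfied. Using the lower bound $K_a \geq 1/a$ and $x \geq \tfrac{8z}{9}$, one obtains
\[
\frac{8z}{9a}\, I(z)^{2} \leq \int_{0}^{z} x\, \eta_{\varepsilon}(dx) + \frac{2z^{2}}{1-\gamma}\,\phi(z) \leq 1 + \frac{2z^{2}}{1-\gamma}\,\phi(z),
\]
where $I(z) := \int_{8z/9}^{z}\phi(x)\,dx$ and $\int x \eta_\varepsilon = 1$ has been used. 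This holds for almost every $z\in(0,2R]$ by Lemma \ref{lem:regularity}.

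\textbf{Step 2 (Eliminating the pointwise $\phi(z)$).} The right-hand side still contains the pointwise value $\phi(z)$, which cannot be controlled directly. I integrate the above inequality on dyadic intervals $[z_0, 2z_0]$ and invoke \eqref{ineq:first mom trunc} to estimate
\[
\int_{z_{0}}^{2z_{0}} z^{2}\phi(z)\,dz \leq 2z_{0}\int_{z_{0}}^{2z_{0}} z\,\phi(z)\,dz \leq 2z_{0}.
\]
This yields an integrated estimate of the form
\[
\int_{z_{0}}^{2z_{0}} z\, I(z)^{2}\, dz \leq \frac{9a(5-\gamma)}{8(1-\gamma)}\, z_{0}.
\]
Since $I$ is continuous (by Lemma \ref{lem:regularity}, $\phi\in L^1_{\text{loc}}$), a mean-value argument allows one to upgrade this to the pointwise bound $I(z)\leq C\sqrt{a/z}$ for every $z \in(0, 2R]$, which is exactly \eqref{decay bound best}.

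\textbf{Step 3 (Deducing \eqref{decay bound a_eps_R}).} I now apply Lemma \ref{ferreira lemma 2.10}(1) with $\varphi \equiv 1$, $\mu(dx) = \phi(x)\,dx$, $b = 8/9$, $d = y$, $L = 2R$, and $g(z) = C\sqrt{a/z^{3}}$ supplied by \eqref{decay bound best}. Using \eqref{support steady state}, $\phi$ vanishes on $(2R,\infty)$, so
\[
\int_{y}^{\infty}\phi(x)\,dx = \int_{y}^{2R}\phi(x)\,dx \leq \frac{C\sqrt{a}}{|\ln(8/9)|}\int_{y}^{2R} z^{-3/2}\,dz + 2R\cdot C\sqrt{\tfrac{a}{(2R)^{3}}}.
\]
The first term is bounded by $C'\sqrt{a}\, y^{-1/2}$, and the boundary term equals $C\sqrt{a/(2R)}\leq C\sqrt{a}\, y^{-1/2}$ whenever $y\in[1,2R]$. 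This gives \eqref{decay bound a_eps_R} with a constant depending on $a$ but not on $R$.

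\textbf{Main obstacle.} The delicate step is Step 2: the flux inequality produces the pointwise $\phi(z)$, which cannot be bounded uniformly without invoking the first-moment constraint, and then passing from the integrated estimate to a pointwise bound on $I(z)$ requires the continuity of $I$. The rest of the argument is essentially a direct application of Lemma \ref{ferreira lemma 2.10}.
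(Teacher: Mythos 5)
Your overall strategy (flux inequality from Lemma \ref{lem:flux a_eps_R} plus the first-moment bound \eqref{ineq:first mom trunc} plus Lemma \ref{ferreira lemma 2.10}) is the right one and matches the paper. Steps 1 and 3 are sound. But Step 2 has a genuine gap at precisely the point you flag as delicate.

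The problem is the final sentence of Step 2: from the integrated estimate $\int_{z_0}^{2z_0} z\, I(z)^2\,dz \leq C a\, z_0$, the mean-value theorem only produces \emph{one} point $z^*\in[z_0,2z_0]$ with $I(z^*)\leq C\sqrt{a/z^*}$; it does not give the bound at every $z$. Continuity of $I$ does not repair this, since a continuous function with small integral can still be large on a short subinterval. In fact, with your choice $I(z)=\int_{8z/9}^{z}\phi$, the set-inclusion trick that would upgrade the integrated bound to a pointwise one fails, because $[8z_0/9,z_0]\subset[8z/9,z]$ forces $z=z_0$ exactly (you need simultaneously $z\geq z_0$ and $8z/9\leq 8z_0/9$).

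This is exactly why the paper restricts the flux to the \emph{larger} square $[2z/3,z]^2$ rather than $[8z/9,z]^2$. With the larger window, after integrating over $z\in[w,2w]$, the inclusion $[8w/9,w]\times[w,4w/3]\subset\{(x,z): 2z/3<x<z,\ z\in[w,2w]\}$ lets one bound the iterated integral from below by $\tfrac{w}{3}\int_{8w/9}^{w}\phi$, yielding the pointwise estimate \eqref{decay bound best}. (The paper also takes a square root first and uses Cauchy--Schwarz rather than integrating the squared form, but that difference is cosmetic; the essential point is the window choice.) Your argument can be repaired by replacing $[8z/9,z]^2$ with $[2z/3,z]^2$ in Step 1 and then observing that, for $z\in[z_0,4z_0/3]$, $[8z_0/9,z_0]\subset[2z/3,z]$, so the integral inequality directly lower-bounds $z_0\bigl(\int_{8z_0/9}^{z_0}\phi\bigr)^2\cdot\tfrac{z_0}{3}$. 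With that fix, Step 3 (the application of Lemma \ref{ferreira lemma 2.10}) goes through as you wrote it and matches the paper.
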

\begin{proof}
Since $\phi$ satisfies \eqref{eq:truncated flux}, it follows that for almost every $z >0$
\begin{equation}\label{eq:upper_bound}
J_{\phi}(z) \ \leq 1 + \frac{2}{1-\gamma}  z^2 \phi(z).
\end{equation}
Noting that 
\begin{equation}\label{eq:Omega_R}
\left[{2z}/{3},z\right]^2 \subset \{(x,y) \in \R^2_* \ |\ 0<x \leq z,\ y> z-x \} =:\Omega_z
\end{equation}
as well as the lower bound for the kernel
\begin{eqnarray}
J_{\phi}(z) 
& \geq & \int_{\frac{2z}{3}}^z \int_{\frac{2z}{3}}^z   K_a(x,y) x \phi(x) \phi(y)dx dy \geq \frac{c z}{a} \left(\int_{\frac{2z}{3}}^z  \phi(x) dx \right)^2 \label{eq:lower_bound} \text{ for } z \leq 2R
\end{eqnarray}
for some constant $c>0$ independent of $a$, $R$ and $\varepsilon$. 

Combining \eqref{eq:upper_bound} and \eqref{eq:lower_bound} we conclude that
\begin{equation}\label{eq:first estimate_eps_a below}
 \int_{[2z/3,z]} \phi(x) dx \leq \left( \frac{a }{c}  \right)^{1/2} \left( \frac{ 1 +\frac{2}{1-\gamma} z^2 \phi(z) }{ z }\right)^{\frac{1}{2}},\quad a.e. \ z \leq 2R.
\end{equation} 
Since $\frac{2}{1-\gamma}  z^2 \phi (z) \geq 0$, integrating \eqref{eq:first estimate_eps_a below} over $[w,2w] $, with $w \in [0, R] $, we obtain that
\begin{align*}
\int_w^{2w} \int_{\frac{2z}{3}}^z \phi(x) dx dz \leq \left(  \frac{a }{c}  \right)^{1/2} \left( \int_w^{2w}  \left( \frac{ 1}{ z } \right)^{\frac{1}{2}} dz+ \int_w^{2w} \left(  \frac{2}{1-\gamma} z \phi(z)\right)^{\frac{1}{2}} dz \right).
\end{align*} 
By Cauchy-Schwartz inequality we conclude that
\begin{align*}
\int_w^{2w} \left(   z \phi(z)  \right)^{\frac{1}{2}} dz &\leq \left( \int_w^{2w } dz \right)^{1/2} \left( \int_w^{2w} z \phi (z) dz  \right)^{1/2} \leq   w^{1/2},
\end{align*} 
notice that, for the last inequality, we have used \eqref{ineq:first mom trunc}.

Combining all the above inequalities we conclude that 
\begin{align*}
\int_w^{2w} \int_{\frac{2z}{3}}^z \phi(x) dx dz \leq \left( \frac{a}{c} \left(1+ \frac{2 }{(1-\gamma)} \right) \right)^{1/2} w^{1/2}.
\end{align*} 
Moreover, by observing that 
\[
[{8w}/{9},w] \times [w,4w/3] \subset \{(x,z) \in \mathbb R^2_* : 2z/3<x<z, z \in [w,2w] \}
\]
we deduce that
\begin{align*}
\int_w^{2w } \int_{\frac{2z}{3}}^z  \phi(x) dx dz  \geq \int_w^{4w/3}  \int_{8w/9}^w \phi(x) dx  dz = {w}/{3} \int_{8w/9}^w \phi(x) dx .
\end{align*}
Consequently, adopting the notation $\tilde{C}= 3c^{-1/2}  \left(\frac{3-\gamma}{1-\gamma}\right)^{1/2}$, we conclude that for any $w \in [0, 2R]$
\begin{align*}
w \int_{8w/9}^w \phi(x) dx \leq \tilde{C} a^{1/2} w^{1/2}.
\end{align*}

Let us prove \eqref{decay bound a_eps_R}.
Thanks to inequality \eqref{decay bound best}, hypothesis \eqref{bound lemma ferreira g} of Lemma \ref{ferreira lemma 2.10} holds with $d=y,$ $b=8/9$, $L=2R$, $g(z)=C z^{-3/2} a^{1/2}$, and implies
\[
\int_y^{ 2R} \phi (x) dx \leq \left( C  \frac{a}{ R}\right)^{\frac{1}{2}}+ C  \frac{\int_y^{ 2 R} \left( \frac{ a }{ {z}^3}\right)^{\frac{1}{2}} dz}{\ln{\left(3/2 \right)}}. 
\]
Since 
\begin{align*}
\int_y^{ 2R}  \left( \frac{ a}{ {z}^3 }\right)^{\frac{1}{2}} dz  \leq \eu{2} \left( \frac{ a}{ y }\right)^{\frac{1}{2}}, \quad \left(   \frac{a}{ R}\right)^{\frac{1}{2}} \leq  2^{1/2}\left( \frac{a}{y}\right)^{1/2} \text{ and } \int_{2R}^\infty \Phi (dx) =0
\end{align*} 
and the result follows.  
\end{proof} 
\subsection{Proof of the existence of a self-similar solution} \label{sec:removal of the truncation}
The aim of this section is to prove that, as $R \rightarrow \infty$, $a \rightarrow \infty $ and $\varepsilon \rightarrow 0$, each solution, $\Phi_{\varepsilon, a, R}$, of \eqref{stationary solution trunc} with respect to $K_a$, $\zeta_R$, $\Xi_\varepsilon$ and $\eta_\varepsilon$, converges, in the \eu{weak$-*$} topology, to a measure $\Phi$ whose density is a self-similar profile as in Definition \ref{def:ss_sol}.

Lemma \ref{lem: Phi_eps_a} and Lemma \ref{lem: Phi_eps} describe the bounds and the properties that the limiting measures $\Phi_{\varepsilon,a}$ and $\Phi_{\varepsilon}$ respectively satisfy. In the proof of Theorem \ref{thm: existence of a ss sol} we will use these bounds and properties to prove the existence of a self-similar profile. 

\begin{lemma} \label{lem: Phi_eps_a}
Consider a sequence $\{ R_n\} \subset \mathbb R_*$ such that $\lim_{n \rightarrow \infty} R_n=\infty$.
Let $\Phi_{\varepsilon, a, R_n }$  be a solution of \eqref{stationary solution trunc} with respect to $K_a$, $\Xi_\varepsilon$, $\eta_\varepsilon$ and  $\zeta_{R_n}.$
There exists a measure $\Phi_{\varepsilon, a}\in \mathcal M_{+}(\mathbb R_*) $ such that 
\begin{equation}\label{eq:limit_Phi_eps_R}
\Phi_{\varepsilon, a, R_n} \rightharpoonup \Phi_{\varepsilon, a} \quad \text{ as } n \to \infty,  \quad \text{ in the \eu{weak$-*$} topology}.
\end{equation}
The measure $\Phi_{\varepsilon, a} $ 
is absolutely continuous with respect to the Lebesgue measure satisfies the equation
\begin{align} \label{ss with a, epsilon}
& \int_{\mathbb R_*}  \Phi_{\varepsilon, a}(dx) \varphi(x) = \int_{\mathbb R_*} \varphi (x) \eta_\varepsilon (dx) +   \frac{2}{1-\gamma} \int_{\mathbb R_*} \Xi_\varepsilon(x) \left( \varphi(x) -  x \varphi'(x)  \right)\Phi_{\varepsilon, a} (dx) \\
+ & \frac{1}{2} \int_{\mathbb R_*}  \int_{\mathbb R_*}  K_a(x,y) [\varphi(x+y)-\varphi(x) -\varphi(y) ] \Phi_{\varepsilon, a}(dx) \Phi_{\varepsilon, a}(dy).  \nonumber
\end{align}
for every $\varphi \in C_c^1(\mathbb R_*). $
Moreover, $\Phi_{\varepsilon,a}$ satisfies the growth bound
\begin{equation} \label{eq:estimate_eps_a}
 \frac{1}{z} \int_{[{8z}/{9}, z]} \Phi_{\varepsilon,a}( dx) \leq C  \left(\frac{1}{z^3\min\{a,z^\gamma \}}\right)^{1/2} \quad z>0,
\end{equation} 
for some positive $C.$
\end{lemma}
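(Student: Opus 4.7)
The plan is to extract a weak-$*$ limit along a subsequence, pass to the limit in the stationary equation using the uniform-in-$R_n$ bounds from Lemma \ref{lem:bounds eps_a_R}, and finally rederive an improved flux bound to get \eqref{eq:estimate_eps_a}. First I would observe that \eqref{support steady state} and \eqref{ineq:first mom trunc} imply
$$\|\Phi_{\varepsilon,a,R_n}\| \leq \tfrac{1}{\varepsilon}\int_{\mathbb{R}_*} x\,\Phi_{\varepsilon,a,R_n}(dx) \leq \tfrac{1}{\varepsilon},$$
so Banach--Alaoglu furnishes a subsequence converging weak-$*$ to some $\Phi_{\varepsilon,a}\in\mathcal{M}_+(\mathbb{R}_*)$. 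Testing against any $\varphi\in C_c((0,\varepsilon))$ and using \eqref{support steady state} gives $\Phi_{\varepsilon,a}((0,\varepsilon))=0$ in the limit.

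Next, for the equation itself, fix $\varphi\in C_c^1(\mathbb{R}_*)$ with $\operatorname{supp}\varphi\subset(0,M]$. For all $R_n>M$ one has $\zeta_{R_n}(x+y)\varphi(x+y)=\varphi(x+y)$ pointwise, so \eqref{stationary solution trunc} for $\Phi_{\varepsilon,a,R_n}$ formally agrees with \eqref{ss with a, epsilon}. The linear terms pass to the limit immediately since $\varphi\eta_\varepsilon$ is independent of $n$ and $\Xi_\varepsilon(\varphi-x\varphi')\in C_c(\mathbb{R}_*)$. For the bilinear term with integrand $\psi(x,y):=K_a(x,y)[\varphi(x+y)-\varphi(x)-\varphi(y)]$, I would introduce a smooth cut-off $\chi_N\in C_c(\mathbb{R}_*)$ equal to $1$ on $[\varepsilon/2,N]$ and decompose $\psi=\chi_N(x)\chi_N(y)\psi+(\text{rest})$. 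The first piece lies in $C_c(\mathbb{R}_*\times\mathbb{R}_*)$, so weak-$*$ convergence of the product measures $\Phi_{\varepsilon,a,R_n}\otimes\Phi_{\varepsilon,a,R_n}$ handles it. For the remainder, the key observation is that $\operatorname{supp}\psi\subset\{x\leq M\}\cup\{y\leq M\}$ and $|\psi|\leq 3a\|\varphi\|_\infty$; applying \eqref{decay bound a_eps_R} gives the uniform-in-$n$ tail control
$$\int\!\!\!\int |1-\chi_N(x)\chi_N(y)|\,|\psi|\,\Phi_{\varepsilon,a,R_n}\otimes\Phi_{\varepsilon,a,R_n}\leq \tfrac{6a\|\varphi\|_\infty}{\varepsilon}\, C_{a,\varepsilon}\,N^{-1/2},$$
which vanishes as $N\to\infty$. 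Sending $n\to\infty$ then $N\to\infty$ yields \eqref{ss with a, epsilon}.

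Absolute continuity of $\Phi_{\varepsilon,a}$ follows by inserting the test function \eqref{test function regularity} into \eqref{ss with a, epsilon} and repeating the H\"older/density argument of Lemma \ref{lem:regularity}; this gives $\Xi_\varepsilon \Phi_{\varepsilon,a}\ll\mathcal{L}$, and combined with $\Phi_{\varepsilon,a}|_{(0,\varepsilon)}=0$ and $\Xi_\varepsilon>0$ on $(\varepsilon,\infty)$ one obtains $\Phi_{\varepsilon,a}\ll\mathcal{L}$. Denoting the density by $\phi_{\varepsilon,a}$, the argument of Lemma \ref{lem:flux a_eps_R} (which uses only $\zeta_R\leq 1$) applied to \eqref{ss with a, epsilon} yields the flux inequality $J_{\phi_{\varepsilon,a}}(z)\leq 1+\tfrac{2}{1-\gamma}z^2\phi_{\varepsilon,a}(z)$. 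To upgrade the crude bound \eqref{decay bound best} to \eqref{eq:estimate_eps_a}, I would repeat the derivation of Lemma \ref{lem:bounds eps_a_R} but replace the trivial lower bound $K_a\geq 1/a$ by the sharper bound $K_a(x,y)\geq c\,\min\{z^\gamma,a\}$ valid on $(x,y)\in[2z/3,z]^2$. This follows from $K_a\geq \min\{K,a\}$ (up to a constant from the truncation) together with $K(x,y)\geq c_1 w(x,y)\gtrsim z^\gamma$ on that square by homogeneity. The ensuing Cauchy--Schwarz computation then produces the factor $\min\{a,z^\gamma\}^{1/2}$ in the denominator instead of $a^{1/2}$.

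The main obstacle is the passage to the limit in the bilinear coagulation term: the integrand $\psi$ fails to be compactly supported in $\mathbb{R}_*\times\mathbb{R}_*$, so the uniform tail control provided by \eqref{decay bound a_eps_R} is indispensable. A secondary technical point is isolating the correct lower bound on $K_a$ needed to promote $a$ to $\min\{a,z^\gamma\}$ inside \eqref{eq:estimate_eps_a}; once this is done the remainder of the argument mirrors Lemma \ref{lem:bounds eps_a_R}.
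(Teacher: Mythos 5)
Your proposal is correct and follows essentially the same route as the paper's proof: Banach--Alaoglu for the weak-$*$ limit, the uniform tail estimate \eqref{decay bound a_eps_R} to control the bilinear coagulation term in the limit (you spell out the cut-off decomposition explicitly, where the paper refers to \cite{ferreira2019stationary}), the test function \eqref{test function regularity} for absolute continuity, and the sharper kernel lower bound $K_a(x,y) \gtrsim \min\{z^\gamma,a\}$ on the square $[2z/3,z]^2$ to upgrade \eqref{decay bound best} to \eqref{eq:estimate_eps_a}. The one small simplification is in the compactness step, where you obtain the uniform total-variation bound $\|\Phi_{\varepsilon,a,R_n}\| \leq \varepsilon^{-1}$ directly from the support constraint \eqref{support steady state} and the first-moment bound \eqref{ineq:first mom trunc}, whereas the paper invokes Lemma \ref{ferreira lemma 2.10} to get the cruder $2a\varepsilon^{-1/2}$; both suffice.
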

\begin{proof}
\eu{ We use the inequality \eqref{decay bound best}, proven in Lemma \ref{lem:bounds eps_a_R},} and apply Lemma \ref{ferreira lemma 2.10} with $d=\varepsilon,$ $b=8/9$, $L=2R_n$, $g(z)=C z^{-3/2} a^{1/2}$ to conclude that 
\[\int_{[\varepsilon, 2R_n]} \Phi_{\varepsilon,a,R_n} (dx) \leq 2 a \varepsilon^{-1/2}.  \]
From \eqref{support steady state} we deduce that the sequence $\{\Phi_{\varepsilon, a, R_n} \}$ is bounded in the \eu{weak$-*$} topology. 

By Banach-Alaoglu Theorem we deduce that the sequence $\{ \Phi_{\varepsilon,a,R_n} \}$ admits a  subsequence, $ \{\Phi_{\varepsilon,a,R_{n_k}} \}$, which converges in the \eu{weak$-*$} topology, namely, there exists a measure $\Phi_{\varepsilon,a}$ such that
\[
\Phi_{\varepsilon, a, R_{n_k}} \rightharpoonup \Phi_{\varepsilon, a} \quad \text{ as } k \to \infty \text{ in the \eu{weak$-*$} topology.}
\]

Since for every $n >0$
\[
\int_{\mathbb R_*}\Phi_{\varepsilon,a,R_n}(dx) \leq C_{\varepsilon,a}
\]
 we conclude, by passing to the limit as $n$ tends to infinity, that
\begin{equation}\label{bound on the 0 moment a_eps}
\int_{\mathbb R_*}\Phi_{\varepsilon,a}(dx) \leq C_{\varepsilon,a}.
\end{equation}

We would like to show that $\Phi_{\varepsilon,a}$ satisfies equation \eqref{ss with a, epsilon}. 
Since
$\Phi_{\varepsilon, a, R_n} \rightharpoonup \Phi_{\varepsilon,a}$ as $n \rightarrow \infty $ in the \eu{weak$-*$} topology, we immediately conclude that, for every $\varphi \in C_c^1(\mathbb R_*)$,
\[
\int_{\mathbb R_* } \varphi(x) \Phi_{\varepsilon, a, R_n} (dx)  \rightarrow \int_{\mathbb R_*} \varphi(x) \Phi_{\varepsilon, a} (dx) \text{ as } n \rightarrow \infty, 
\]
\[
\int_{\mathbb R_*} \Xi_\varepsilon(x) \varphi(x) \Phi_{\varepsilon, a, R_n} (dx)  \rightarrow \int_{\mathbb R_*} \Xi_\varepsilon(x) \varphi(x) \Phi_{\varepsilon, a} (dx) \text{ as } n \rightarrow \infty. 
\]
and 
\[
\int_{\mathbb R_*} \Xi_\varepsilon(x)  x\varphi'(x) \Phi_{\varepsilon, a, R_n} (dx)  \rightarrow \int_{\mathbb R_*} \Xi_\varepsilon(x)  x\varphi'(x) \Phi_{\varepsilon, a} (dx) \text{ as } n \rightarrow \infty. 
\]

It is not straightforward to conclude that
\begin{align*}
 & \int_{\mathbb R_*} \int_{\mathbb R_*}  K_a(x,y) [\zeta_{R_n}(x+y)\varphi(x+y)-\varphi(x) -\varphi(y) ] \Phi_{\varepsilon, a, R_n}(dx) \Phi_{\varepsilon,a,R_n}(dy) \rightarrow  \\
& \int_{\mathbb R_*} \int_{\mathbb R_*}  K_a(x,y) [\varphi(x+y)-\varphi(x) -\varphi(y) ] \Phi_{\varepsilon, a}(dx) \Phi_{\varepsilon, a}(dy) \text{ as } n \rightarrow \infty.
\end{align*} 
The main difficulty lies in the fact that the function 
\[
(x,y) \mapsto K_a(x,y) [\zeta_{R_n}(x+y)\varphi(x+y)-\varphi(x) -\varphi(y) ] 
\]
has not, in general, a compact support. Here the \eu{estimate} \eqref{decay bound a_eps_R} can be used. 
The details of the proof are shown in \cite{ferreira2019stationary} (proof of Theorem 2.3) and we omit them here. We conclude that $\Phi_{\varepsilon, a } $ is a solution of equation \eqref{ss with a, epsilon}.

An adaptation of the proof of Lemma \ref{lem:regularity} allows us to conclude that $\Phi_{\varepsilon,a} \ll \mathcal L.$
Indeed, if we consider the test function defined by \eqref{test function regularity}, then 
 for any $p, q \in \mathbb R_*$ such that $1/p + 1/q=1$, we deduce that for some $C(\varepsilon, a, \gamma) >0$ we have that
\begin{align*}
\frac{2}{1-\gamma} \left| \int_{\mathbb R_*}\chi(x) \Phi_{\varepsilon,a} (dx)\right|  \leq C(\varepsilon, a, \gamma) \left\| \frac{1}{z} \right\|_{L^p(\supp (\chi))} \|\chi \|_{L^q(\supp (\chi) )} .
\end{align*}
By the density of $C_c(\mathbb R_*)$  in $L^q(\mathbb R_*)$ we conclude that for any $q$ and any compact set $\mathcal K $ 
\begin{align*}
 \left| \int_{\mathbb R_*} \chi(x) \Phi_{\varepsilon,a} (dx)\right| \leq  C(\mathcal K, \gamma, \varepsilon,a) \|\chi \|_{L^q(\mathcal K)} \quad \forall \chi \in L^q(\mathcal K).
\end{align*} 
This implies that the measure $\Phi_{\varepsilon,a}$ is absolutely continuous with respect to the Lebesgue measure. 

Let us prove estimate \eqref{eq:estimate_eps_a}.
First of all, as in the proof of Lemma \ref{lem:flux a_eps_R}, we can conclude that, if $\Phi_{\varepsilon,a}$ satisfies \eqref{ss with a, epsilon}, then the density $\phi_{a,\varepsilon}$ satisfies 
\begin{align} \label{eq:truncated flux a eps}
\tilde{J}_{\phi_{\varepsilon,a}}(z) &= \int_0^z  x \eta_\varepsilon(dx) - \int_0^z  x\phi_{\varepsilon,a}(x) dx + \frac{2}{1-\gamma} \Xi_\varepsilon(z)  z^2 \phi_{\varepsilon,a} (z), 
\end{align}
for almost every $ z>0 $ and where 
\begin{equation}
\tilde{J}_{\phi_{\varepsilon,a}}(z) :=\int_0^z \int_{z-x}^\infty K_a(x,y) x \phi_{\varepsilon,a}(x) dx \phi_{\varepsilon,a}(y) dy. 
\end{equation} 

From \eqref{eq:truncated flux a eps}, it follows that for almost every $z >0$
\begin{equation}\label{eq:upper_bound_a_eps}
\tilde{J}_{\phi_{\varepsilon,a}}(z) \ \leq 1+ \frac{2}{1-\gamma}  z^2 \phi_{\varepsilon,a} (z). 
\end{equation}
Noting that $\left[{2z}/{3},z\right]^2 \subset \Omega_z$,
where $\Omega_z$ is defined by \eqref{eq:Omega_R},
as well as the condition on the kernel \eqref{kernel gamma lambda}, we write 
\begin{eqnarray}\label{eq:lower_bound_a_eps}
\tilde{J}_{\phi_{\varepsilon,a}}(z) \geq  c z \min \{ z^{\gamma}, a\} \left( \int_{\frac{2z}{3}}^z  \phi_{\varepsilon,a}(x) dx \right)^2 
\end{eqnarray}
for some constant $c>0$ independent of $a$ and $\varepsilon$. 

Combining \eqref{eq:upper_bound_a_eps} and \eqref{eq:lower_bound_a_eps} we conclude that
\begin{equation}\label{eq:first estimate_eps_a}
 \int_{\frac{2z}{3}}^z \phi_{\varepsilon,a}(x) dx \leq\left( \frac{1}{c}\right)^{1/2} \left( \frac{ 1 +\frac{2}{1-\gamma}  z^2 \phi_{\varepsilon,a} (z) }{ z \min \{ z^{\gamma}, a\}}\right)^{\frac{1}{2}},\quad a.e. \  z >0.
\end{equation} 
Since $ z^2 \phi_{\varepsilon,a} (z) \geq 0 $, by integrating \eqref{eq:first estimate_eps_a} over $[w,2w] $, with $w \geq 0 $, we obtain that there exists a constant $ \tilde{c}(\gamma) >0$ such that 
\begin{align*}
\int_w^{2w}  \int_{\frac{2z}{3}}^z  \phi_{\varepsilon,a}(x) dx dz \leq \tilde{c}(\gamma) \left(  \int_w^{2w}  \left( \frac{ 1}{ z \min \{ z^{\gamma}, a\} } \right)^{\frac{1}{2}} dz+ \int_w^{2w} \left(\frac{ z^2 \phi_{\varepsilon,a} (z) }{ z \min \{ z^{\gamma}, a\}}\right)^{\frac{1}{2}} dz\right) .
\end{align*} 
By Cauchy-Schwartz inequality we conclude that
\begin{align*}
\int_w^{2w}  \left( \frac{ 1}{ z \min \{ z^{\gamma}, a\} } \right)^{\frac{1}{2}} dz 
&\leq \left( \ln2\right)^{1/2} \cdot \left( \int_w^{2w} \frac{1}{ \min \{ z^{\gamma}, a\} }dz \right)^{1/2} 
\end{align*}  
and 
\begin{align*}
\int_w^{2w } \left(\frac{  z \phi_{\varepsilon,a} (z) }{ \min \{ z^{\gamma}, a\}}\right)^{\frac{1}{2}} dz 
\leq \left( \int_w^{2w} \frac{1}{ \min \{ z^{\gamma}, a\} }dz \right)^{1/2}. 
\end{align*}

Combining all the above inequalities we conclude that there exists a constant $c(\gamma)>0$
\begin{align*}
\int_w^{2w} \int_{\frac{2z}{3}}^z \phi_{\varepsilon,a }(x) dx dz \leq c(\gamma) \left( \int_w^{2w} \frac{1}{ \min \{ z^{\gamma}, a\} }dz \right)^{1/2}.
\end{align*} 
Moreover, by observing that $[\frac{8w}{9},w] \times [w,\frac{4w}{3}] \subset \{(x,z) \in \mathbb R^2_* : \frac{2z}{3}<x<z, z \in [w,2w] \}$
we deduce that
\begin{align*}
\int_w^{ 4w/3} \int_{8w/9}^w \phi_{\varepsilon,a}(x) dx dz  \geq \int_w^{ 4w/3 }  \int_{8w/9}^w \phi_{\varepsilon,a}(x) dx  dz = {w}/{3} \int_{8w/9}^w \phi_{\varepsilon,a}(x) dx .
\end{align*}
and, consequently, adopting the notation $\tilde{C}= 3c(\gamma)$ we conclude that for any $w >0$
\begin{align*}
w \int_{8w/9}^w \phi_{\varepsilon,a}(x) dx \leq \tilde{C}  \left( \int_w^{2w} \frac{1}{ \min \{ z^{\gamma}, a\} }dz \right)^{1/2}.
\end{align*}
Notice that $\tilde{C}$ is independent of $a, \varepsilon.$

If $a^{1/\gamma} \notin [w,2w]$, then 
\begin{align*}
w \int_{8w/9}^w \phi_{\varepsilon,a}(x) dx \leq 2^{1/2}  \tilde{C}\max\left\{1,\left( \frac{1}{1-\gamma}\right)^{1/2} \right\}\left(\frac{ w}{ \min \{ a, w^\gamma \} } \right)^{1/2}.
\end{align*}
Whereas if $a^{1 /\gamma } \in [w,2w]$, then  
\begin{align*}
& w \int_{8w/9}^w \phi_{\varepsilon,a}(x) dx \leq  \tilde{C} \left( \int_w^{a^{1/\gamma}} \frac{1}{  z^{\gamma} }dz + \int_{a^{1/\gamma}}^{2w} \frac{1}{a}dz\right)^{1/2}  \leq \tilde{C}\left(\frac{a^{1/\gamma-1}}{1-\gamma}  
 + \frac{2w}{a}\right)^{1/2}\\
&  \leq 2^{1/2} \tilde{C}  \left( \frac{w}{ a (1-\gamma) }+ \frac{ w }{a}\right)^{1/2}  \leq   2^{1/2} \tilde{C} \max\left\{1,\left( \frac{1}{1-\gamma}\right)^{1/2}\right\}  \left(\frac{ w}{ \min \{ a, w^\gamma \} } \right)^{1/2}. 
\end{align*}
The statement of the Lemma follows by selecting $C= 2^{1/2} \tilde{C} \max\left\{1,\left( \frac{1}{1-\gamma}\right)^{1/2} \right\} $. 
\end{proof} 
In the following definition we explain how we truncate the coagulation kernel $K$ to obtain a bounded coagulation kernel. 
Let us adopt the notation 
\begin{equation}\label{p}
p:=\max\left\{\lambda, - (\gamma +\lambda )\right\}. 
\end{equation} 
Each homogeneous coagulation kernel $K$ of parameters $\gamma, \lambda $ and with homogeneity $\gamma$ can be written as 
\begin{equation}\label{kernel alternative form}
K(x,y)= (x+y)^\gamma F\left( \frac{x}{x+y}\right)
\end{equation}
with $F:(0,1) \rightarrow \mathbb R_+$ being a smooth function \eu{ such that
\begin{equation}\label{kernel ineq}
F(s)=F(1-s) \quad \text{and} \quad \frac{ C_1 }{ s^p (1-s)^p}\leq F(s) \leq \frac{ C_2 }{ s^p (1-s)^p} 
\end{equation}
for any $s \in (0,1)$ and for some constants $C_1,C_2$ satisfying $0<C_1 \leq C_2 <\infty$.}
\begin{definition}\label{def:bounded coagulation kernel relative to K}
Assume $K$ to be a homogeneous coagulation kernel of parameters $\gamma, \lambda \in \mathbb  R$ and homogeneity $\gamma$.
We say that $K_a$ is the bounded coagulation kernel corresponding to $K $ and of bound $a>0$ if 
\[
K_a(x,y):=1/a + \min\left\{(x+y)^\gamma, a \right\} F_a \left( \frac{x}{x+y}\right) \quad x,y \in \mathbb R_*, 
\]
where $F_a$ is a smooth non negative function such that 
\[
 F_a \left(s \right) :=\begin{cases} F(s)  &\text{ if }  F(s)  \leq A a^\sigma \\
                                                     0 &\text{ if } F(s) \geq A a^\sigma, \end{cases} 
\] 
where $A>0$ is a constant independent of $a$, $\sigma =0$ if $p\leq 0$ while $\sigma >0$ if $p >0 $ and $\gamma \leq 0 $ and, finally, $0 < \sigma < \frac{p}{\gamma}$ if $p>0$ and $\gamma >0$.
\end{definition}  

This definition is taken from \cite{ferreira2019stationary} and even if it might seem odd, it allows to pass to the limit as $a $ goes to infinity in \eqref{ss with a, epsilon}. 
The main properties of this truncation of the kernel, that motivated us to introduce Definition \ref{def:bounded coagulation kernel relative to K}, are exposed in the following lemma.
\begin{lemma}\label{lem:ferreira limits}
Let $\{a_n\} \subset \mathbb R_*$ such that $a_n \rightarrow \infty$ as $n \rightarrow \infty$. 
For every $n$, let $K_{a_n}$ be a bounded coagulation kernel of bound $a_n$, corresponding to a homogeneous coagulation kernel $K$ of parameters $\gamma$ and $\lambda $, with homogeneity $\gamma < 1$ and $|\gamma + 2 \lambda |<1.$
Let $\mathcal C$ be a compact subset of $\mathbb R_*$ and $M>0.$
For each $x \in \mathcal C$ and $y> M$ it holds that 
\begin{enumerate}
\item if $\gamma, p \leq 0$, then 
\[
\min\left\{(x+y)^\gamma, { a_n}\right\} F_{a_n} \left( \frac{x}{x+y}\right) \leq  c_3 , \quad c_3>0;
\]
\item if $ p \leq 0$ and $\gamma >0$, then 
\begin{align*}
\min\left\{(x+y)^\gamma, { a_n}\right\} F_{a_n} \left( \frac{x}{x+y}\right) & \leq  c_4 \left( y^{-\lambda} + y^{\gamma +\lambda} \right) \chi_{\{y \leq {a_n}^{1/\gamma} \}}(y) \\
&+ c_4 a_n \left( y^{\lambda} + y^{-\gamma -\lambda} \right) \chi_{\{y > {a_n}^{1/\gamma} \}}(y),
\end{align*}
where $c_4>0$; 
\item if $p >0$ then 
\[
\min\left\{(x+y)^\gamma, { a_n}\right\} F_{a_n} \left( \frac{x}{x+y}\right)  \leq  c_5 \left( y^{-\lambda} + y^{\gamma +\lambda} \right) \chi_{\{y \leq c_* {a_n}^{-\sigma/p} \}}(y)
\]
where $c_5, c_*>0$.
\end{enumerate}
Let $\Phi_{\varepsilon, a_n} $ be a solution of \eqref{ss with a, epsilon}, with respect to $K_{a_n}$ and $\eta_\varepsilon$.  Then, for every $\varphi \in C_c(\mathbb R_*)$ we have that 
\begin{equation}\label{eq:horrible lemma}
 \int_{\mathbb R_*} \int_{(M,\infty)} \min\left\{(x+y)^\gamma, { a_n}\right\} F_{a_n} \left( \frac{x}{x+y}\right) \varphi(y) \Phi_{\varepsilon,{a_n}}(dx) \Phi_{\varepsilon,{a_n}} (dy) \rightarrow 0,
\end{equation}
as $M\rightarrow \infty $.
\end{lemma}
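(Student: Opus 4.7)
The plan is to prove the three pointwise bounds in items 1--3 by a direct case analysis of $\min\{(x+y)^\gamma, a_n\} F_{a_n}(x/(x+y))$, and then to deduce the integral estimate \eqref{eq:horrible lemma} by combining these pointwise bounds with the moment and growth estimates for $\Phi_{\varepsilon, a_n}$ established in Lemma \ref{lem: Phi_eps_a}.

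For the pointwise bounds, the key ingredients are the two-sided estimate $F(s) \approx (s(1-s))^{-p}$ from \eqref{kernel ineq} and the cutoff $F_{a_n}(s) = 0$ whenever $F(s) > A a_n^\sigma$. For $x$ in the compact set $\mathcal{C}$ and $y > M$ large, $s := x/(x+y)$ is small and $1-s$ is close to $1$. Item 1 reduces to a constant kernel, since the joint hypothesis $p \leq 0$ and $\gamma \leq 0$, together with $p = \max(\lambda, -(\gamma+\lambda))$, forces $\gamma = \lambda = 0$. Item 2 splits according to whether $y \leq a_n^{1/\gamma}$: in the first sub-case the minimum equals $(x+y)^\gamma$, and the upper bound $K(x,y) \leq c_2(x^{\gamma+\lambda}y^{-\lambda} + y^{\gamma+\lambda} x^{-\lambda})$ from \eqref{kernel gamma lambda} yields $(x+y)^\gamma F(s) \leq C(y^{-\lambda} + y^{\gamma+\lambda})$ because $x$ lies in a compact set; in the second sub-case one instead writes $a_n F(s) \leq C a_n (x/(x+y))^{-p}(y/(x+y))^{-p} \leq C a_n y^p \leq C a_n(y^\lambda + y^{-\gamma-\lambda})$, using $p = \max(\lambda, -\gamma-\lambda)$. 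In item 3, the lower bound $F(s) \geq C_1(s(1-s))^{-p}$ combined with the cutoff forces $s \geq c'' a_n^{-\sigma/p}$ on the support of $F_{a_n}$, which with $s = x/(x+y)$ and bounded $x$ constrains $y$ to lie in a bounded set depending on $a_n$, producing the stated characteristic function; on this set one argues as in the first sub-case of item 2.

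For the integral bound, I would substitute the pointwise estimates into \eqref{eq:horrible lemma} and reduce everything to tail moments $\int_{[M,\infty)} y^\alpha \Phi_{\varepsilon, a_n}(dy)$ with $\alpha \in \{-\lambda, \gamma+\lambda, \lambda, -\gamma-\lambda\}$. The first-moment bound \eqref{ineq:first mom trunc} gives $\int_{[M,\infty)} y^\alpha \Phi_{\varepsilon, a_n}(dy) \leq M^{\alpha-1}$ whenever $\alpha \leq 1$, and this tends to zero as $M \to \infty$ provided $\alpha < 1$. The hypothesis $|\gamma + 2\lambda| < 1$ forces $-\lambda < (1+\gamma)/2 < 1$ and $\gamma + \lambda < (1+\gamma)/2 < 1$, so all contributions from items 1 and 3 and from the first sub-case of item 2 vanish in the limit. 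The $x$-integration contributes a factor that is controlled using \eqref{eq:estimate_eps_a} together with Lemma \ref{ferreira lemma 2.10}.

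The main obstacle is the explicit factor $a_n$ appearing in the second sub-case of item 2. There the naive tail bound is not enough; instead, I would use $\int_{\{y > a_n^{1/\gamma}\}} y^\alpha \Phi_{\varepsilon,a_n}(dy) \leq a_n^{(\alpha-1)/\gamma}\int y\,\Phi_{\varepsilon,a_n}(dy) \leq a_n^{(\alpha-1)/\gamma}$, so multiplying by $a_n$ yields an exponent $(\gamma + \alpha - 1)/\gamma$ on $a_n$. For $\alpha \in \{\lambda, -\gamma-\lambda\}$, the assumption $|\gamma + 2\lambda| < 1$ gives $\gamma + \lambda < 1$ and $-\lambda < 1$, so this exponent is strictly negative and the contribution vanishes as $a_n \to \infty$ uniformly in $M$; this is precisely where the range restriction on the parameters enters.
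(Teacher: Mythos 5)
Your overall strategy---pointwise bounds extracted from Definition \ref{def:bounded coagulation kernel relative to K} and the two-sided estimate \eqref{kernel ineq}, then tail-moment estimates for $\Phi_{\varepsilon,a_n}$---is the right one and matches the route the paper indicates, which for this lemma simply defers to the proof of Theorem 2.3 in \cite{ferreira2019stationary}. Your observation in item~1 that $p\le 0$ together with $\gamma\le 0$ forces $\gamma=\lambda=0$ (so the kernel reduces to a bounded one) is correct and clean, and the split in item~2 on $y \lessgtr a_n^{1/\gamma}$ with $a_n F(s)\lesssim a_n y^p$ is also sound.

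Two points need attention. First, in item~3 your own derivation yields $y \le c_* a_n^{+\sigma/p}$, not $y \le c_* a_n^{-\sigma/p}$: the cutoff kills $F_{a_n}$ where $F(s)\ge A a_n^\sigma$, and with $F(s)\ge C_1(s(1-s))^{-p}\approx C_1 s^{-p}$ for small $s$ this forces $s=x/(x+y)\gtrsim a_n^{-\sigma/p}$ on $\supp F_{a_n}$, hence $y\lesssim x\,a_n^{\sigma/p}$. The sign in the printed statement looks like a typo (otherwise the characteristic function would vanish identically for $y>M$ once $n$ is moderately large, which contradicts the pointwise convergence $F_{a_n}\to F$ as $a_n\to\infty$ at fixed $s$). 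You should not assert that your computation ``produces the stated characteristic function'' without flagging this discrepancy. Second, your treatment of the $a_n$ factor in the second sub-case of item~2 proves a bound $C a_n^{(\gamma+\alpha-1)/\gamma}$ that is independent of $M$ and small as $n\to\infty$; but the lemma asserts smallness as $M\to\infty$. The fix is short: the integration in that sub-case is over $\{y>\tilde M\}$ with $\tilde M:=\max(M,a_n^{1/\gamma})$, and in both the case $\tilde M=M$ (then $a_n\le M^\gamma$) and the case $\tilde M=a_n^{1/\gamma}$ (then $a_n>M^\gamma$ and the exponent $(\gamma+\alpha-1)/\gamma$ is negative) one obtains $a_n\tilde M^{\alpha-1}\le M^{\gamma+\alpha-1}$, uniformly in $n$; this vanishes as $M\to\infty$ since $\gamma+\alpha<1$, exactly by the parameter restriction you invoke. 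Finally, the tail-moment inequality $\int_{(M,\infty)} y^\alpha \Phi_{\varepsilon,a_n}(dy)\le M^{\alpha-1}$ relies on $\int y\,\Phi_{\varepsilon,a_n}(dy)\le 1$, which is \eqref{ineq:first mom trunc} carried through the weak-$*$ limit $R\to\infty$; this step should be stated explicitly.
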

For the proof of Lemma \ref{lem:ferreira limits} we refer to \cite{ferreira2019stationary}, proof of Theorem 2.3. The main idea is that Definition \ref{def:bounded coagulation kernel relative to K} allows to prove the inequalities presented in the Lemma, which imply \eqref{eq:horrible lemma}. 
\begin{lemma} \label{lem: Phi_eps}
Assume $K$ to be a homogeneous symmetric coagulation kernel $K \in C(\mathbb R_* \times \mathbb R_*)$ satisfying \eqref{kernel gamma lambda} with homogeneity $\gamma <1 $ and $|\gamma + 2 \lambda | < 1. $
Consider a sequence $\{ a_n\} \subset \mathbb R_*$ such that $\lim_{n \rightarrow \infty} a_n=\infty$ and the sequence of bounded coagulation kernels $\{K_{a_n} \}$ corresponding to $K.$
Let $\Phi_{\varepsilon, a_n}$  be a solution of \eqref{ss with a, epsilon} with respect to $\eta_\varepsilon$, $\Xi_\varepsilon$ and with respect to $K_{a_n}.$
There exists a measure $\Phi_{\varepsilon} $ such that 
\begin{equation}\label{eq:limit_Phi_eps}
\Phi_{\varepsilon, a_n} \rightharpoonup \Phi_{\varepsilon} \quad \text{ as } n \to \infty,  \quad \text{ in the \eu{weak$-*$} topology}.
\end{equation}
The measure $\Phi_{\varepsilon} $ 
is absolutely continuous with respect to the Lebesgue measure, with density $\phi_{\varepsilon} $. It satisfies the bounds
\begin{equation}\label{moment bounds Phi_eps}
\int_{\mathbb R_*}  x^{\gamma +\lambda }\Phi_\varepsilon(dx) < \infty , \quad \int_{\mathbb R_*}  x^{-\lambda }\Phi_\varepsilon(dx) < \infty
\end{equation}
and 
\begin{equation} \label{eq:estimate_epsilon}
\frac{1}{z} \int_{[8z/9,z]} \Phi_{\varepsilon}(dx) \leq \left( \frac{ C }{ z^{3+\gamma} }\right)^{\frac{1}{2}},\quad z  >0.
\end{equation}
Moreover, it solves for every $\varphi \in C_c(\mathbb R_*)$ the equation
\begin{align} \label{ss with epsilon}
& \int_0^\infty \Phi_{\varepsilon}(dx) \varphi(x) = \int_0^\infty \varphi (x) \eta_\varepsilon (dx) + \frac{2}{1-\gamma} \int_0^\infty \Xi_\varepsilon(x) \left( \varphi(x)- x \varphi'(x) \right) \Phi_{\varepsilon} (dx) dx   \\
+ & \frac{1}{2} \int_0^\infty \int_0^\infty K (x,y) [\varphi(x+y)-\varphi(x) -\varphi(y) ] \Phi_{\varepsilon}(dx) \Phi_{\varepsilon}(dy).  \nonumber
\end{align}
\end{lemma}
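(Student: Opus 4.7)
The plan is to obtain uniform estimates on $\{\Phi_{\varepsilon,a_n}\}$, extract a weak$-*$ convergent subsequence, and pass to the limit in \eqref{ss with a, epsilon}, with the tail control in the coagulation term coming from Lemma~\ref{lem:ferreira limits}.

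First I would obtain uniform total-variation bounds. The mass bound \eqref{ineq:first mom trunc} combined with $\Phi_{\varepsilon,a_n}((0,\varepsilon])=0$ gives $\int_{[\varepsilon,1]}\Phi_{\varepsilon,a_n}(dx)\le\varepsilon^{-1}$. For the tail I apply Lemma~\ref{ferreira lemma 2.10} with $d=1$, $b=8/9$, $L=\infty$ to the estimate \eqref{eq:estimate_eps_a}: since $\gamma<1$, the majorant $z\mapsto Cz^{-3/2}(\min\{a_n,z^\gamma\})^{-1/2}$ lies in $L^1([1,\infty))$ with a bound uniform in $n$ (treating $\gamma\le 0$ and $0<\gamma<1$ separately). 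Hence $\|\Phi_{\varepsilon,a_n}\|\le C(\varepsilon)$, and Banach--Alaoglu extracts a subsequence converging weak$-*$ to some $\Phi_\varepsilon$.

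For each fixed $z>0$ and $n$ large enough, $\min\{a_n,z^\gamma\}=z^\gamma$, so \eqref{eq:estimate_eps_a} yields $\tfrac{1}{z}\int_{[8z/9,z]}\Phi_{\varepsilon,a_n}(dx)\le(Cz^{-3-\gamma})^{1/2}$. Approximating $\chi_{[8z/9,z]}$ from above by continuous functions and using weak$-*$ convergence gives \eqref{eq:estimate_epsilon}. The moment bounds \eqref{moment bounds Phi_eps} then follow from Lemma~\ref{ferreira lemma 2.10} applied with $\varphi(x)=x^{\gamma+\lambda}$ and $\varphi(x)=x^{-\lambda}$: on $[1,\infty)$ the needed majorants are $Cz^{\gamma+\lambda-(3+\gamma)/2}$ and $Cz^{-\lambda-(3+\gamma)/2}$, both integrable at infinity precisely because $|\gamma+2\lambda|<1$; contributions from $[\varepsilon,1]$ are bounded trivially.

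To derive \eqref{ss with epsilon}, fix $\varphi\in C_c^1(\mathbb R_*)$. The linear terms in \eqref{ss with a, epsilon} converge by weak$-*$ convergence. For the quadratic coagulation term I split the domain as $[0,M]^2$ and its complement. On $[0,M]^2$, Definition~\ref{def:bounded coagulation kernel relative to K} gives $K_{a_n}\to K$ uniformly, so the integrand converges uniformly and weak$-*$ convergence of $\Phi_{\varepsilon,a_n}\otimes\Phi_{\varepsilon,a_n}$ delivers convergence of the integral. The tails where one variable exceeds $M$ are controlled uniformly in $n$ by Lemma~\ref{lem:ferreira limits}; the analogous tail bound for $\Phi_\varepsilon$ follows from $K\le c_2 w$ together with the moment bounds \eqref{moment bounds Phi_eps}, and vanishes as $M\to\infty$ by dominated convergence. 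Absolute continuity of $\Phi_\varepsilon$ is finally obtained as in Lemma~\ref{lem:regularity}: substituting $\varphi(x)=-\int_x^\infty z^{-1}\chi(z)\,dz$ into \eqref{ss with epsilon} and controlling the coagulation term via \eqref{moment bounds Phi_eps}, H\"older's inequality gives $|\int\Xi_\varepsilon\chi\,d\Phi_\varepsilon|\le C\|\chi\|_{L^q}$ for every $q<\infty$, forcing $\Xi_\varepsilon\Phi_\varepsilon\ll\mathcal L$ and thus $\Phi_\varepsilon\ll\mathcal L$. The main obstacle is the limit of the coagulation term: the integrand is never compactly supported in $(x,y)$ and the truncated kernels $K_{a_n}$ grow unboundedly; Lemma~\ref{lem:ferreira limits}, whose proof exploits the truncation in Definition~\ref{def:bounded coagulation kernel relative to K} and the condition $|\gamma+2\lambda|<1$, is what makes this passage possible.
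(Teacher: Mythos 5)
Your overall strategy (uniform bound on $\{\Phi_{\varepsilon,a_n}\}$, Banach--Alaoglu, term-by-term passage to the limit with Lemma~\ref{lem:ferreira limits} handling the tails of the coagulation term, and absolute continuity via the test function from Lemma~\ref{lem:regularity}) matches the paper's proof. There is, however, a concrete gap in your tightness step. You claim that the majorant $g(z)=Cz^{-3/2}\bigl(\min\{a_n,z^\gamma\}\bigr)^{-1/2}$ from \eqref{eq:estimate_eps_a} lies in $L^1([1,\infty))$ uniformly in $n$. For $\gamma\le 0$ and $z\ge 1$ one has $\min\{a_n,z^\gamma\}=z^\gamma$ for $n$ large, so $g(z)\sim C z^{-(3+\gamma)/2}$, and this is integrable at infinity only if $\gamma>-1$. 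The hypotheses $\gamma<1$, $|\gamma+2\lambda|<1$ do not exclude $\gamma\le -1$ (take, e.g., $\gamma=-2$, $\lambda\in(1/2,3/2)$), so the claim fails there and Lemma~\ref{ferreira lemma 2.10}(2) cannot be applied to conclude $\Phi_{\varepsilon,a_n}([1,\infty))$ is bounded.

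The gap is easy to close, and in fact your Lemma~\ref{ferreira lemma 2.10} detour is unnecessary: the first-moment bound \eqref{ineq:first mom trunc} (which survives the $R\to\infty$ limit in Lemma~\ref{lem: Phi_eps_a}) already gives $\Phi_{\varepsilon,a_n}([1,\infty))\le \int_{[1,\infty)}x\,\Phi_{\varepsilon,a_n}(dx)\le 1$, so combined with your bound $\Phi_{\varepsilon,a_n}([\varepsilon,1])\le\varepsilon^{-1}$ the uniform total-variation estimate follows directly. The paper instead handles $\gamma\le 0$ by applying Lemma~\ref{ferreira lemma 2.10} to the weighted measure $x^{\gamma+\lambda}\Phi_{\varepsilon,a_n}(dx)$, whose majorant has exponent $(\gamma+2\lambda-3)/2<-1$ by $|\gamma+2\lambda|<1$, and then extracts a weak$-*$ convergent subsequence of the weighted measures; either route works. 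The remainder of your argument (pointwise passage in \eqref{eq:estimate_eps_a} to get \eqref{eq:estimate_epsilon}, moment bounds via Lemma~\ref{ferreira lemma 2.10}, splitting the coagulation term into $[0,M]^2$ and its complement with Lemma~\ref{lem:ferreira limits}, and absolute continuity) is sound and aligns with the paper.
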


\begin{proof}
We know that $\Phi_{\varepsilon, a} ((0, \varepsilon])=0$. Therefore inequality \eqref{eq:estimate_eps_a} is non-trivial when $z>\varepsilon.$
Let us consider the case $\gamma \leq 0$. In this case we have that $z^{\gamma} \leq \varepsilon^{\gamma}$. Since $\varepsilon$ is fixed and $a_n \rightarrow \infty $ as $n \rightarrow \infty $, there exists an $\overline n $ such that $z^\gamma < a_{n}$ for very $n \geq \overline n.$

Consequently, for every $n \geq \overline n$, we conclude that
\[
\frac{1}{z} \int_{[8z/9,z] } \Phi_{\varepsilon,a_n}(dx) \leq\frac{ C^{1/2}}{z^{(\gamma+3)/2}}.
\] 
Applying Lemma \ref{ferreira lemma 2.10} to the rescaled measure $x^{\gamma+\lambda}\Phi_{\varepsilon, a_n} (dx)$ we conclude that, if $n \geq \overline n$, then
\[
\int_{[\varepsilon, \infty)} x^{\gamma+\lambda} \Phi_{\varepsilon,a_n}(dx) \leq C^{1/2} \int_{[\varepsilon, \infty)}z^{-(\gamma+3)/2} z^{\gamma +\lambda} dz < C_\varepsilon
\] 
where $C_\varepsilon$ is a constant which depends only on $\varepsilon$ and where the last inequality comes from the bound $|\gamma+2\lambda |< 1$. 

 Since $\Phi_{\varepsilon, a}((0,\varepsilon])=0$, the sequence of rescaled measures $\{ x^{\gamma+\lambda}\Phi_{\varepsilon, a_n}(dx)\}_{n \geq \overline n}$ belongs to a compact set, and we conclude by Banach-Alaoglu Theorem that there exists a subsequence of $\{ x^{\gamma+\lambda} \Phi_{\varepsilon, a_n}(dx) \}$ which converges, in the \eu{weak$-*$} topology, to a measure $\mu.$
This implies that, if we denote by $\Phi_{\varepsilon} $ the measure defined by $\Phi_{\varepsilon}(dx):= x^{-(\gamma+\lambda)} \mu(dx) $, we have, up to a subsequence, that
\[
\Phi_{\varepsilon, a_n} \rightharpoonup \Phi_\varepsilon
\] 
as $n \rightarrow \infty $, in the \eu{weak$-*$} topology. 

Let us consider now the case $0 < \gamma <1.$ If $z^\gamma>a$ it holds that 
\[
\frac{1}{z} \int_{[8z/9,z] } \Phi_{\varepsilon,a}(dx) \leq\frac{ C^{1/2}}{ a z^{3/2}}.
\] 
If, instead, $z^{\gamma} \leq a $, then
\[
\frac{1}{z} \int_{[8z/9,z] } \Phi_{\varepsilon,a}(dx) \leq\frac{ C^{1/2}}{ z^{(3+\gamma)/2}}. 
\] 
By applying Lemma \ref{ferreira lemma 2.10} to the scaled measure $ \Phi_{\varepsilon, a_n}$ with $a_n \geq 1$, we conclude that 
\[
\int_{[\varepsilon,\infty)} \Phi_{\varepsilon,a_n}(dx)  \leq C^{1/2} \left( \varepsilon^{-1/2}+ \varepsilon^{(-1+\gamma)/2} \right). 
\]
Therefore, the sequence $\{\Phi_{\varepsilon, a_n}\}$ belongs to a compact set, and $\Phi_{\varepsilon, a_n} \rightharpoonup \Phi_\varepsilon$
as $n \rightarrow \infty $ in the \eu{weak$-*$} topology.

Passing to the limit as $a \rightarrow \infty$ in inequality \eqref{eq:estimate_eps_a} we obtain that
\[
\frac{1}{z} \int_{[8z/9,z]} \Phi_{\varepsilon}(dx) \leq \left( \frac{ C }{ z^{3+\gamma} }\right)^{\frac{1}{2}},\quad z  >0.
\]

By applying Lemma \ref{ferreira lemma 2.10} with $g(z)=z^{-\frac{3+\gamma}{2}}$ and using the assumption $| \gamma+ 2 \lambda| < 1 $, we deduce that
\[
\int_{\mathbb R_*} x^{-\lambda } \Phi_\varepsilon(dx) < \infty \text{ and } \int_{\mathbb R_*} x^{\lambda + \gamma } \Phi_\varepsilon(dx) < \infty. 
\] 

Let us pass to the limit as $n $ tends to infinity in all the terms of equation \eqref{ss with a, epsilon}. 

From the \eu{weak$-*$} convergence of $\Phi_{\varepsilon,a_n } $ to $\Phi_\varepsilon$, we conclude that, for every $\varphi \in C^1_c(\mathbb R_*),$
\[
\int_{\mathbb R_*}\varphi(x)\Phi_{\varepsilon, a_n}(dx) \rightarrow \int_{\mathbb R_*} \varphi(x) \Phi_{\varepsilon}(dx) \quad n \rightarrow \infty , 
\] 
\[
 \int_{\mathbb R_*} \Xi_\varepsilon(x) x \varphi'(x)\Phi_{\varepsilon, a_n}(dx) \rightarrow \int_{\mathbb R_*}  \Xi_\varepsilon(x) x\varphi'(x) \Phi_{\varepsilon}(dx) \quad n \rightarrow \infty
\]
and 
\[
 \int_{\mathbb R_*} \Xi_\varepsilon(x) \varphi(x)\Phi_{\varepsilon, a_n}(dx) \rightarrow \int_{\mathbb R_*}  \Xi_\varepsilon(x) \varphi(x) \Phi_{\varepsilon}(dx) \quad n \rightarrow \infty. 
\]

For the proof of the fact that, for every $\varphi \in C_c(\mathbb R_*)$, 
\[
\int_{\mathbb R_*} \int_{\mathbb R_*}  K_{a_n}(x,y)[\varphi(x+y)-\varphi(y)-\varphi(x)] \Phi_{\varepsilon,a_n}(dx) \Phi_{\varepsilon, a_n} (dy) 
\]
converges, as $n$ tends to infinity, to
\[
\int_{\mathbb R_*} \int_{\mathbb R_*}  K(x,y)[\varphi(x+y)-\varphi(y)-\varphi(x)] \Phi_{\varepsilon}(dx) \Phi_{\varepsilon} (dy)
\]
we refer to \cite{ferreira2019stationary}, proof of Theorem 2.3. 
Nevertheless we find instructive to provide here the main steps of the proof. 
First of all, since $K_a$ and $K $ are continuous functions we have that, for any compact subset of ${\mathbb R_*}^2 $, $(xy)^q K_a(x,y) \rightarrow (xy)^q K(x,y) $, uniformly in $x,y$ as $a \rightarrow \infty$ for any $q \in \mathbb R.$
Consequently, for any $\varphi \in C_c(\mathbb R_*)$
\[
 \int_{\mathbb R_*} \int_{\mathbb R_*}  K_{a_n}(x,y)\varphi(x+y) \Phi_{\varepsilon,{a_n}}(dx) \Phi_{\varepsilon,{a_n}} (dy) \rightarrow  \int_{\mathbb R_*} \int_{\mathbb R_*}  K(x,y)\varphi(x+y) \Phi_{\varepsilon}(dx) \Phi_{\varepsilon} (dy), 
\]
as $n\rightarrow \infty. $
For the same reason we know that 
\[
 \int_{\mathbb R_*} \int_{(0, M] }  K_{a_n}(x,y)\varphi(x) \Phi_{\varepsilon,{a_n}}(dy) \Phi_{\varepsilon,{a_n}} (dx) \rightarrow  \int_{\mathbb R_*} \int_{(0, M] }  K(x,y)\varphi(x) \Phi_{\varepsilon}(dy) \Phi_{\varepsilon} (dx)
\]
and 
\[
 \int_{\mathbb R_*} \int_{(0, M] }  K_{a_n}(x,y)\varphi(y) \Phi_{\varepsilon,{a_n}}(dx) \Phi_{\varepsilon,{a_n}} (dy) \rightarrow  \int_{\mathbb R_*} \int_{(0, M] }  K(x,y)\varphi(y) \Phi_{\varepsilon}(dx) \Phi_{\varepsilon} (dy).
\]

To conclude we just need now to show that, as $M\rightarrow \infty $
\[
 \int_{\mathbb R_*} \int_{(M,\infty)}  K_{a_n}(x,y)\varphi(y) \Phi_{\varepsilon,{a_n}}(dx) \Phi_{\varepsilon,{a_n}} (dy) \rightarrow 0,
\]
and 
\[
   \int_{\mathbb R_*} \int_{(M, \infty) }  K_{a_n}(x,y)\varphi(x) \Phi_{\varepsilon,{a_n}}(dy) \Phi_{\varepsilon,{a_n}} (dx) \rightarrow 0. 
\] 
Notice that, by the definition of $K_{a_n}$, 
\begin{align}\label{to pass to the limit a}
 & \int_{\mathbb R_*} \int_{(M,\infty)}  K_{a_n}(x,y)\varphi(x) \Phi_{\varepsilon,{a_n}}(dy) \Phi_{\varepsilon,{a_n}} (dx) \\
&\leq \frac{1}{a_n}  \int_{\mathbb R_*} \int_{(M,\infty)}\varphi(x) \Phi_{\varepsilon,{a_n}}(dy) \Phi_{\varepsilon,{a_n}} (dx) \nonumber \\ 
& +  \int_{\mathbb R_*} \int_{(M,\infty)} \min\{(x+y)^\gamma,a\} F_{a_n}\left(\frac{x}{x+y}\right)\varphi(x) \Phi_{\varepsilon,{a_n}}(dy) \Phi_{\varepsilon,{a_n}} (dx).  \nonumber
\end{align} 
Thanks to \eqref{decay bound a_eps_R}, we have that 
\[
\int_{\mathbb R_*} \int_{(M,\infty)}\varphi(x) \Phi_{\varepsilon,{a_n}}(dy) \Phi_{\varepsilon,{a_n}} (dx ) \leq c M^{-1/2} 
\]
where $c$ is just a positive constant. 

By Lemma \ref{lem:ferreira limits} we know that 
\[
\int_{\mathbb R_*} \int_{(M,\infty)} \min\{(x+y)^\gamma,a\} F_{a_n}\left(\frac{x}{x+y}\right)\varphi(x) \Phi_{\varepsilon,{a_n}}(dy) \Phi_{\varepsilon,{a_n}} (dx) \rightarrow 0
\]
as $M \rightarrow \infty.$
The same holds also for the term  \[
\int_{\mathbb R_*} \int_{(M,\infty)} \min\{(x+y)^\gamma,a\} F_{a_n}\left(\frac{x}{x+y}\right)\varphi(y) \Phi_{\varepsilon,{a_n}}(dx) \Phi_{\varepsilon,{a_n}} (dy).
\]

 We conclude that $\Phi_\varepsilon$ satisfies the following equation for every $\varphi \in C^1(\mathbb R_*)$
\begin{align*} 
&\int_0^\infty \Phi_{\varepsilon}(dx) \varphi(x)  = \int_0^\infty \varphi (x) \eta_\varepsilon (dx) + \frac{2}{1-\gamma} \int_0^\infty  \Xi_\varepsilon(x) \left( \varphi(x) - x \varphi'(x) \right) \Phi_{\varepsilon} (dx)  \\
+ & \frac{1}{2} \int_0^\infty \int_0^\infty K(x,y) [\varphi(x+y)-\varphi(x) -\varphi(y) ] \Phi_{\varepsilon}(dx) \Phi_{\varepsilon}(dy).
\end{align*}

As in the proof of Lemma \ref{lem:regularity}, we can choose the test function $\varphi$ given by expression \eqref{test function regularity} to conclude that, for any $p, q \in \mathbb R_*$ such that $1/p + 1/q=1$, we have that
\begin{align*}
&  \frac{2}{1-\gamma} \left| \int_{\mathbb R_*}\chi(x) \Xi_\varepsilon (x) \Phi_\varepsilon(dx)\right|  \leq
  \frac{3}{2}\int_{\mathbb R_*} x^{\gamma +\lambda} \Phi_{\varepsilon}(dx)\int_{\mathbb R_*} x^{-\lambda} \Phi_{\varepsilon}(dx)\left\|  \frac{1}{z} \right\|_{L^p(U)} \|\chi \|_{L^q(U)} \\
&+ \eta_\varepsilon ({\mathbb R_*}) \left\| \frac{1}{z} \right\|_{L^p(U)} \|\chi \|_{L^q(U)}  + \frac{\left| 1+\gamma\right| }{1-\gamma}\Phi_\varepsilon({\mathbb R_*}) \left\|  \frac{1}{z} \right\|_{L^p(U)} \|\chi \|_{L^q(U)}  
\end{align*}
where we are using the notation $\supp(\chi):=U.$
By the density of $C_c(\mathbb R_*)$  in $L^q(\mathbb R_*)$ we conclude that for any $q$ and any compact set $\mathcal K $ 
\begin{align*}
 \left| \int_{\mathbb R_*} \Xi_\varepsilon (x)  \chi(x) \Phi_\varepsilon (dx)\right| \leq  C(\mathcal K, \gamma, \varepsilon, \Phi_\varepsilon) \|\chi \|_{L^q(\mathcal K)} \quad \forall \chi \in L^q(\mathcal K).
\end{align*} 
Therefore, $\Phi_\varepsilon \ll \mathcal L$. 
\end{proof}
We now introduce some notation, which will be employed in the following and a Lemma, taken from \cite{ferreira2019stationary}, which will be important for the proof of Theorem \ref{thm: existence of a ss sol}.

For a given $\delta>0$, we  consider the partition  $\R_+^2 = \Sigma_1(\delta) \cup \Sigma_2(\delta) \cup \Sigma_3(\delta)$ with
\begin{eqnarray*}
\Sigma_1(\delta) =  \{(x,y)\ |\  y >  x/\delta  \},\quad \Sigma_2(\delta) =  \{(x,y)\ |\ \delta x\leq y \leq   x/\delta  \}, \quad \Sigma_3(\delta) = \{(x,y)\ |\ \ y <  \delta x \} 
\end{eqnarray*}
and, if $\mu \in \mathcal M_+(\mathbb R_*)$ is such that for every $z>0$
\[
 \int_{\Omega_z }K\left( x,y\right) x\mu \left( dx\right) \mu(dy) < \infty, 
\]
\eu{where $\Omega_z$ is defined by \eqref{eq:Omega_R},} then we define 
\begin{equation*} \label{eq:fluxeq_j}
J_j(z,\delta,\mu) := 
\int_{\Omega_z \cap \Sigma_j(\delta)}K\left( x,y\right) x\mu \left( dx\right)
\mu \left( dy\right)  \ \text{for }z >0
\end{equation*}
for $j=1,2,3$.

Notice that 
\[
\sum_{j=1}^3 J_j(z,\delta,\mu) = \int_{\Omega_z }K\left( x,y\right) x\mu \left( dx\right)
\mu \left( dy\right).
\]
\begin{lemma}\label{ferreira lemma 6.1}
Let $K $ be a homogeneous symmetric coagulation kernel $K \in C(\mathbb R_* \times \mathbb R_*)$ satisfying \eqref{kernel gamma lambda} with $|\gamma + 2 \lambda | < 1.$ Suppose that the measure $\mu \in \mathcal M_+(\mathbb R_*)$ satisfies 
\begin{equation} \label{hp:ferreira lemma 6.1}
\frac{1}{z} \int_{[z/2,z]} \mu(dx) \leq \frac{A}{z^{(\gamma+3)/2}} \text{ for all } z >0.
\end{equation}
Then, for every $\varepsilon>0$, there exists a $\delta_\varepsilon>0$ depending on $\varepsilon$, as well as on $\gamma$ and $\lambda $ and on the constants $c_1 $ and $c_2$ of inequality \eqref{kernel gamma lambda}, but independent of $A$, such that for any $\delta \leq \delta_\varepsilon$, we have that
\begin{equation} \label{bound J_1}
\sup_{z >0} J_1(z, \delta,\mu) \leq \varepsilon A^2 
\end{equation}
\begin{equation} \label{bound J_2}
\sup_{R>0}\frac{1}{R} \int_{[R,2R]} J_3(z,\delta,\mu) dz \leq \varepsilon A^2.
\end{equation}
\end{lemma}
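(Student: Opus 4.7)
The plan is to replace $K$ by the pointwise upper bound $c_2 w(x,y)$ from \eqref{kernel gamma lambda}, extract a small power of $\delta$ from the integration regions $\Sigma_1(\delta)$ (where $x\ll y$) and $\Sigma_3(\delta)$ (where $y\ll x$), and estimate the remaining moment integrals of $\mu$ using Lemma \ref{ferreira lemma 2.10}. Combining the hypothesis \eqref{hp:ferreira lemma 6.1} with Lemma \ref{ferreira lemma 2.10} yields the family of truncated-moment bounds
\begin{equation*}
\int_0^d x^\alpha\,\mu(dx) \leq C A\,d^{\alpha-(\gamma+1)/2}\ \text{ for }\alpha>\tfrac{\gamma+1}{2},\qquad
\int_d^\infty x^\alpha\,\mu(dx) \leq C A\,d^{\alpha-(\gamma+1)/2}\ \text{ for }\alpha<\tfrac{\gamma+1}{2},
\end{equation*}
and the assumption $|\gamma+2\lambda|<1$ is precisely what places each of the exponents $\alpha\in\{-\lambda,\gamma+\lambda,1-\lambda,1+\gamma+\lambda\}$ strictly on the correct side of $(\gamma+1)/2$, producing a gain $\varepsilon_0:=(1-|\gamma+2\lambda|)/2>0$ that will make the powers of $\delta$ below strictly positive.

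For \eqref{bound J_2} I would swap the $dz$-integral inside the $\mu\otimes\mu$ integral: for fixed $(x,y)$ with $y<\delta x$, the set of $z\in[R,2R]$ satisfying $x\leq z<x+y$ is an interval of length at most $y$. After this swap, the factor $y$ promotes the integrand $K(x,y)x$ to $K(x,y)xy\leq c_2(x^{1+\gamma+\lambda}y^{1-\lambda}+x^{1-\lambda}y^{1+\gamma+\lambda})$, and performing the inner $\mu(dy)$-integral on $(0,\delta x)$ (both exponents $1-\lambda$ and $1+\gamma+\lambda$ exceed $(\gamma+1)/2$) reduces both terms to $CA\delta^{\varepsilon_0}x^{(3+\gamma)/2}$. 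A final application of Lemma \ref{ferreira lemma 2.10} gives $\int_0^{2R}x^{(3+\gamma)/2}\mu(dx)\leq CAR$ (here the bound on the integrand is $A$, constant in $z$, and the $L$-term $L\,g(L)=2RA$ is exactly of the expected order), the $R$ cancels the $1/R$ prefactor, and the bound $CA^2\delta^{\varepsilon_0}\leq \varepsilon A^2$ follows for $\delta$ small.

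For \eqref{bound J_1} the argument is pointwise in $z$, which makes it more delicate. The geometric observation is that in $\Sigma_1(\delta)\cap\Omega_z$ one has $y\geq z/(1+\delta)$, since $\max(x/\delta,z-x)$ is minimized at $x=\delta z/(1+\delta)$. After replacing $K$ by $c_2w$ I would split the $y$-range into $[z/(1+\delta),z/\delta]$ (where the effective $x$-cutoff is $\delta y$) and $[z/\delta,\infty)$ (where it is $z$), and apply the moment bounds in each piece. On the unbounded slab this yields a clean bound $CA^2\delta^{\varepsilon_0}$; on the bounded slab the two quantities $y^{-\lambda}\int_0^{\delta y}x^{1+\gamma+\lambda}\mu(dx)$ and $y^{\gamma+\lambda}\int_0^{\delta y}x^{1-\lambda}\mu(dx)$ both collapse to $CA\delta^{\varepsilon_0}y^{(\gamma+1)/2}$, and the outer integration leaves $\int_{z/(1+\delta)}^{z/\delta}y^{(\gamma+1)/2}\mu(dy)\leq CA\log(1/\delta)$.

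The main obstacle is precisely this last integral: the exponent $(\gamma+1)/2$ is exactly where the moment estimate from Lemma \ref{ferreira lemma 2.10} just barely fails to converge, and this produces a logarithmic factor $\log(1/\delta)$ which cannot be eliminated by any refinement of the exponents, only tolerated. The pointwise-in-$z$ nature of \eqref{bound J_1} is what forces us into this critical regime, whereas the extra factor $y$ obtained from the $dz$-average in \eqref{bound J_2} shifts the critical exponent to $(3+\gamma)/2$, for which the bound is safely away from the endpoint. The saving grace is that the strict inequality $|\gamma+2\lambda|<1$ gives $\varepsilon_0>0$, and $\delta^{\varepsilon_0}\log(1/\delta)\to 0$ as $\delta\to 0$, so the total bound $CA^2\delta^{\varepsilon_0}(1+\log(1/\delta))\leq \varepsilon A^2$ is achieved by taking $\delta_\varepsilon$ small enough.
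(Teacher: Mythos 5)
Your proof is correct, but note that this paper does not actually contain a proof of the lemma: it is stated verbatim and attributed to \cite{ferreira2019stationary} (``a Lemma, taken from \cite{ferreira2019stationary}''), so there is nothing in the paper to compare against directly. That said, your reconstruction is sound, and it is the natural proof. The truncated moment bounds you extract from Lemma \ref{ferreira lemma 2.10} are exactly right, and your identification of $\varepsilon_0=(1-|\gamma+2\lambda|)/2$ as the uniform gap between the exponents $-\lambda,\ \gamma+\lambda$ (below) and $1-\lambda,\ 1+\gamma+\lambda$ (above) and the critical value $(\gamma+1)/2$ is the heart of the argument. Your treatment of $J_3$ by Fubini (promoting $K(x,y)x$ to $K(x,y)xy$ via the $z$-interval of length $\leq y$, then a final application of Lemma \ref{ferreira lemma 2.10} with $g\equiv CA$ on $(0,2R]$) is clean. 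For $J_1$, the geometric observation $y>z/(1+\delta)$ on $\Sigma_1(\delta)\cap\Omega_z$ and the split at $y=z/\delta$ are both correct, and you are right that the bounded slab lands exactly on the critical exponent $(\gamma+1)/2$, so that Lemma \ref{ferreira lemma 2.10} with $g(z)=CA/z$ gives $\int_{z/(1+\delta)}^{z/\delta}y^{(\gamma+1)/2}\mu(dy)\leq CA\bigl(1+\log(1/\delta)\bigr)$; the total $CA^2\delta^{\varepsilon_0}\bigl(1+\log(1/\delta)\bigr)$ still tends to $0$ as $\delta\to 0$, independently of $A$, which is all that is needed. A minor presentational point: the $\int_0^d$ moment bound is not literally an instance of Lemma \ref{ferreira lemma 2.10} (which is stated on $[d,L]$ with $d>0$); one takes the lower endpoint to $0$ by monotone convergence, which is permissible exactly because $\alpha>(\gamma+1)/2$ makes the bound uniform. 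This is worth a sentence if you write the argument out in full.
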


\begin{proposition} \label{prop:Phi}
Assume $K$, $\eta$, $\lambda$ and $\gamma$ to be as in the assumptions of Theorem \ref{thm: existence of a ss sol}. 
There exists a $\Phi \in \mathcal M_{+}(\mathbb R_*)$ 
with 
\[
J_\Phi \in L^\infty_{loc}(\mathbb R_*)
\]
solving
\begin{equation} \label{eq:ss_eq not AC} 
\int_{\mathbb R_*} \varphi (z) \left( J_\Phi(z) dz - dz  + \int_{(0,z] } x \Phi(dx) dz - \frac{2}{1-\gamma} z^2 \Phi(dz) \right)= 0
\end{equation} 
for every test function $\varphi \in C_c(\mathbb R_*)$ and satisfying  \eqref{eq:estimate_Phi} and the inequalities 
\begin{equation}\label{first mom bound}
\int_{\mathbb R_*}  x \Phi(dx) \leq 1, 
\end{equation} 
\begin{equation} \label{bound p q}
 \int_{(1, \infty) }  x^{p }  \Phi(dx)  <\infty , \quad \int_{(0,1]} x^{q}  \Phi(dx)  < \infty 
\end{equation} 
with $q=\min\{ \gamma +\lambda +1, 1-\lambda \}$ and $p=\max \{\gamma+\lambda, -\lambda\} $. 
\end{proposition}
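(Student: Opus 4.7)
The plan is to obtain $\Phi$ as a vague (weak-$*$) limit of a subsequence of the measures $\Phi_\varepsilon$ constructed in Lemma \ref{lem: Phi_eps}, as $\varepsilon \to 0$. The first task is to upgrade the bounds already known for $\Phi_\varepsilon$ to bounds uniform in $\varepsilon$. The growth bound \eqref{eq:estimate_epsilon} is itself uniform, so only the moment bounds need uniformizing. To get $\int_{\mathbb R_*}x\,\Phi_\varepsilon(dx)\le 1$ I would test \eqref{ss with epsilon} with a mollified cut-off $x\chi_{M,n}(x)$ and pass to the limit $M\to\infty$, $n\to\infty$ as in Lemma \ref{lem:inv region}: the coagulation term with $\varphi(x)=x$ is non-positive after cut-off removal, the transport term vanishes because $\varphi(x)-x\varphi'(x)=0$, and the source contributes $\int x\,\eta_\varepsilon(dx)=1$. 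Then, applying Lemma \ref{ferreira lemma 2.10} to $x^s\Phi_\varepsilon(dx)$ with $g(z)=Cz^{s-(\gamma+3)/2}$, the hypothesis $|\gamma+2\lambda|<1$ translates exactly into the inequalities $p<(\gamma+1)/2$ and $q>(\gamma+1)/2$ that make the resulting integrals finite. This yields $\int_{[1,\infty)}x^{p}\Phi_\varepsilon(dx)\le C$ and $\int_{(0,1]}x^{q}\Phi_\varepsilon(dx)\le C$ uniformly in $\varepsilon$, and in particular uniform bounds on $\Phi_\varepsilon(K)$ for every compact $K\subset\mathbb R_*$.

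Step two is extraction and the passage of the easy bounds. By Banach--Alaoglu on each compact subset of $\mathbb R_*$ and a diagonal argument, some subsequence $\Phi_{\varepsilon_n}$ converges vaguely to a Radon measure $\Phi\in\mathcal M_+(\mathbb R_*)$. The growth bound \eqref{eq:estimate_Phi}, the first-moment bound \eqref{first mom bound}, and the moment bounds \eqref{bound p q} for $\Phi$ follow by Fatou's lemma applied to test functions approximating the relevant indicator weights. Next I would derive the $\varepsilon$-level flux equation: arguing exactly as in Lemma \ref{lem:flux a_eps_R}, inserting the test function $x\chi_{z,n}(x)$ into \eqref{ss with epsilon} and letting $n\to\infty$ shows that the density $\phi_\varepsilon$ of $\Phi_\varepsilon$ (which exists by Lemma \ref{lem: Phi_eps}) satisfies, for almost every $z>0$,
\begin{equation*}
J_{\phi_\varepsilon}(z)=\int_0^z x\,\eta_\varepsilon(dx)-\int_0^z x\phi_\varepsilon(x)\,dx+\frac{2}{1-\gamma}\,\Xi_\varepsilon(z)\,z^2\phi_\varepsilon(z).
\end{equation*}
Multiplying by $\varphi\in C_c(\mathbb R_*)$ and integrating in $z$, the source term converges to $\int\varphi(z)\,dz$ because $\int_0^z x\eta_\varepsilon(dx)=1$ for $z>2\varepsilon$; the term $\Xi_\varepsilon(z)z^2\phi_\varepsilon(z)\,dz=\Xi_\varepsilon(z)z^2\Phi_\varepsilon(dz)$ converges vaguely against $\varphi$ to $\int\varphi(z)z^2\Phi(dz)$ by dominated convergence on the $\Xi_\varepsilon$ factor; and the mass integral $\int\varphi(z)\big(\int_0^zx\,\Phi_\varepsilon(dx)\big)dz$ passes to the limit by the uniform near-zero moment bound together with vague convergence.

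The main obstacle is the flux term $\int\varphi(z)J_{\phi_\varepsilon}(z)\,dz$. The double integral lives on an unbounded set and the kernel $K$ has size-ratio singularities controlled only by $\lambda$. I would split the $(x,y)$-plane using the partition $\Sigma_1(\delta)\cup\Sigma_2(\delta)\cup\Sigma_3(\delta)$ from Lemma \ref{ferreira lemma 6.1}. On $\Sigma_2(\delta)$, the integrand $K(x,y)x\mathbf 1_{y>z-x}$ is continuous on compact subsets of $\mathbb R_*^2$ and is compactly supported once one localizes in $z$ via $\varphi$, so vague convergence of $\Phi_{\varepsilon_n}\otimes\Phi_{\varepsilon_n}$ to $\Phi\otimes\Phi$ gives the limit directly. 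On $\Sigma_1(\delta)$ and $\Sigma_3(\delta)$, Lemma \ref{ferreira lemma 6.1}, applied to $\Phi_\varepsilon$ and to $\Phi$ using their uniform bound \eqref{eq:estimate_Phi}, bounds these contributions by $\varepsilon_0 A^2$ for $\delta$ small enough, uniformly in $\varepsilon$. A diagonal argument first in $\varepsilon$ then in $\delta$ produces the identity \eqref{eq:ss_eq not AC} in the distributional sense against every $\varphi\in C_c(\mathbb R_*)$. Finally, $J_\Phi\in L^\infty_{\mathrm{loc}}(\mathbb R_*)$ follows from the flux identity itself by bounding the right-hand side of \eqref{eq:ss_eq not AC} locally using \eqref{bound p q} and the assumption $\gamma<1$.
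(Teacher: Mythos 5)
Your overall plan — extract a weak-$*$ limit of the $\Phi_\varepsilon$ from Lemma \ref{lem: Phi_eps}, upgrade the growth and moment bounds by Fatou/Lemma \ref{ferreira lemma 2.10}, derive the $\varepsilon$-level flux identity by testing with $x\chi_{z,n}$, and pass to the limit in the flux term by splitting into $\Sigma_1,\Sigma_2,\Sigma_3$ and invoking Lemma \ref{ferreira lemma 6.1} — is precisely the paper's strategy, and the computation showing $|\gamma+2\lambda|<1$ is equivalent to $p<(\gamma+1)/2<q$ is correct.

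The genuine gap is the last sentence: $J_\Phi\in L^\infty_{\rm loc}(\mathbb R_*)$ does \emph{not} follow from the flux identity \eqref{eq:ss_eq not AC}. That identity, rearranged, reads
\[
J_\Phi(z)\,dz \;=\; dz \;-\; \Big(\int_{(0,z]}x\,\Phi(dx)\Big)\,dz \;+\; \tfrac{2}{1-\gamma}\,z^2\,\Phi(dz),
\]
and the right-hand side contains the measure $\Phi(dz)$, which at this stage is not known to have a locally bounded density — indeed nothing in Proposition~\ref{prop:Phi} asserts that $\Phi$ is even absolutely continuous. At best the identity gives $J_\Phi\in L^1_{\rm loc}$ as the density of the absolutely continuous part of a locally finite measure, and even establishing absolute continuity of $z^2\Phi(dz)$ here would require that $J_\Phi(z)\,dz$ already be a well-defined (locally finite) measure, which is the very thing you are trying to prove. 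There is also an order-of-operations issue: to pass to the limit in $\int\varphi\,J_{\phi_{\varepsilon_n}}$ and identify the limit as $\int\varphi\,J_\Phi$, you already need to know that $J_\Phi$ is a finite, locally integrable function.

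The correct route, and the one the paper follows, is to estimate $J_\Phi(z)$ directly from its definition \emph{before} deriving the flux identity. Write
\[
J_\Phi(z)=\int_{(z,\infty)}\int_{(0,z]}xK(x,y)\,\Phi(dx)\,\Phi(dy)
+\int_{(0,z]}\int_{(z-y,z]}xK(x,y)\,\Phi(dx)\,\Phi(dy).
\]
The first term is finite for each $z>0$ using only the moment bounds \eqref{bound p q} and the kernel bound \eqref{kernel gamma lambda}. For the second term one splits $y\le z/2$ from $y>z/2$; using \eqref{eq:estimate_Phi} together with Lemma \ref{ferreira lemma 2.10} one controls $\int_{(z-y,z]}x^{1-\lambda}\Phi(dx)$ (and the companion integral with exponent $1+\gamma+\lambda$) by $y\,z^{(-1-2\lambda-\gamma)/2}$ when $y\le z/2$, respectively by $y^{(1-2\lambda-\gamma)/2}$ when $y>z/2$, and then re-uses \eqref{bound p q}. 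This gives a bound on $J_\Phi(z)$ that is uniform for $z$ in any compact subset of $\mathbb R_*$. With $J_\Phi\in L^\infty_{\rm loc}$ in hand, your passage to the limit in the flux term (with the understanding that the $\Sigma_2$-contribution is handled after a Fubini step, since $\mathbf 1_{y>z-x}$ is only continuous in $(x,y)$ once integrated against $\varphi(z)\,dz$) is then exactly the argument of the paper.
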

\begin{proof}
Consider the sequences $\{\varepsilon_n \}$, $\{a_m\}$ and $\{ R_k\} $, with $\lim_{n \rightarrow \infty} \varepsilon_n=0, $ $\lim_{m \rightarrow \infty} a_m=\infty$ and $\lim_{k \rightarrow \infty} R_n=\infty.$
Consider a sequence of measures $\{ \Phi_{\varepsilon_n,a_m,R_k} \} $ that solve \eqref{stationary solution trunc} with respect to $\zeta_{R_n}$, $K_{a_n}$ and $\eta_{\varepsilon_n}$. 
By Lemma \ref{lem: Phi_eps_a} and Lemma \ref{lem: Phi_eps} we know that $\Phi_{\varepsilon_n,a_m, R_k}$ converges in the \eu{weak$-*$} topology to the absolutely continuous measure $\Phi_{\varepsilon_n}$ as $m$ and $k$ go to infinity,  that solves \eqref{ss with epsilon} and satisfies the bound \eqref{eq:estimate_epsilon}.
We would like to prove that there exists a measure $\Phi$ such that $\Phi_{ \varepsilon_n} \rightharpoonup  \Phi \text{ as } n \rightarrow \infty$
in the \eu{weak$-*$} topology, and that $\Phi$ solves \eqref{eq:ss_eq not AC}, \eqref{bound p q} and \eqref{first mom bound}.  

Consequently we use the following diagonal argument, which is similar to the one used in Section 7 of \cite{ferreira2019stationary}. 
We notice that if $I_k:=[2^{-k }, 2^k]$, then $\mathbb R_*= \bigcup_{k=1}^\infty I_k.$
The restricted sequence $\left\{ {\Phi_{\varepsilon_n}}_{| I_k} \right\}$ on $I_k$ has a convergent subsequence, $\left\{ {\Phi_{\varepsilon_{n_l}}}_{| I_k} \right\}$.
Since if $k>m $, then $I_m \subset I_k$, we can use a diagonal argument to conclude that, up to a subsequence, there exists a measure $\Phi\in \mathcal M_{+}(\mathbb R_*)$ such that $\Phi_{\varepsilon_n} \rightharpoonup \Phi$ as $ n \rightarrow \infty$ in the \eu{weak$-*$} topology. 

Since $\Phi$ is the \eu{weak$-*$} limit of $\Phi_{\varepsilon_n}$ as $n \rightarrow \infty$ thanks to \eqref{ineq:first mom trunc}, we know that
\begin{equation*}
\int_{\mathbb R_*} x \Phi_{\varepsilon_n} (dx)  \leq  1.
\end{equation*}
Passing to the limit for $n \rightarrow \infty$ we deduce \eqref{first mom bound}. Similarly, passing to the limit in \eqref{moment bounds Phi_eps} we deduce \eqref{eq:estimate_Phi}.

Thanks to inequality \eqref{eq:estimate_Phi}, Lemma \ref{ferreira lemma 2.10} and the assumption $|\gamma+2\lambda |<1$ we can prove \eqref{bound p q}. 

We aim now at showing that $J_\Phi \in L^\infty_{loc}(\mathbb R_*)$. 
Notice that
\begin{align*}
J_\Phi(z) = \int_{(z, \infty) } \int_{(0,z]} x K(x,y)  \Phi(dx) \Phi(dy) + \int_{(0,z]}\int_{(z-y,z] } x K(x,y)  \Phi(dx) \Phi(dy) 
\end{align*} 
and using \eqref{bound p q} and \eqref{eq:estimate_Phi} we deduce that for every $z>0$
\begin{align*}
& \int_{(z, \infty) } \int_{(0,z]} x K(x,y)  \Phi(dx) \Phi(dy) \leq c_2  \int_{(z, \infty) }  y^{-\lambda}  \Phi(dy)  \int_{(0,z]} x^{1+\gamma+\lambda}  \Phi(dx)  \\
&+ c_2 \int_{(z, \infty) }  y^{\gamma+\lambda} \Phi(dy)  \int_{(0,z] }  x^{1-\lambda} \Phi(dx) < c_5 < \infty,
\end{align*}
where $c_5$ is a positive constant depending on $\gamma $ and $\lambda$ and independent of $z.$

On the other hand 
\begin{align*}
& \int_{(0,z]}\int_{(z-y,z] } x K(x,y)  \Phi(dx) \Phi(dy)\leq c_2 \int_{(0,z]}\int_{(z-y,z] } x^{1-\lambda}  \Phi(dx)  y^{\gamma+\lambda }\Phi(dy) \\
&+c_2 \int_{(0,z]}\int_{(z-y,z] } x^{1+\lambda+\gamma} \Phi(dx)  y^{-\lambda}  \Phi(dy).
\end{align*}
Again, from \eqref{eq:estimate_Phi} we conclude, applying Lemma \ref{ferreira lemma 2.10}, that
\[
\int_{(z-y ,z] } x^{1-\lambda}  \Phi(dx)  \leq c \int_{z-y}^z  x^{1-\lambda- (3+\gamma)/2} dx = z^{(1-2\lambda -\gamma)/2} - (z-y)^{(1-2\lambda -\gamma)/2}. 
\]
Notice that for every $p>0$, if $x \geq y $, then 
\begin{equation}\label{bound p x y}
(x+y)^p - x^p \leq C(p) x^{p-1} y
\end{equation}
while if $x \leq y $, 
\begin{equation}\label{bound p y x}
(x+y)^p - x^p \leq c(p) y^{p}
\end{equation}
where $c(p) $ and $C(p) $. 
We conclude that if $y \leq \frac{z}{2}, $ we have
\[
\int_{(z-y ,z] } x^{1-\lambda}  \Phi(dx)  \leq z^{(-1-2\lambda-\gamma)/2} y. 
\]
If instead $y > \frac{z}{2}, $ we obtain
\[
\int_{(z-y ,z] } x^{1-\lambda}  \Phi(dx)  \leq y^{(1-2\lambda-\gamma)/2}. 
\]
Consequently, for every $z>0$ we have that 
\[
 \int_{(0,z/2]}\int_{(z-y,z] } x^{1-\lambda}  \Phi(dx)  y^{\gamma+\lambda }\Phi(dy)  \leq  z^{(-1-2\lambda-\gamma)/2} \int_{(0,z]} y^{1+\gamma+\lambda }\Phi(dy) <c_6 < \infty, 
\]
where $c_6>0$ is independent of $z$, and 
\[
 \int_{(z/2,z]}\int_{(z-y,z] } x^{1-\lambda}  \Phi(dx)  y^{\gamma+\lambda }\Phi(dy) <  \int_{(z/2,z]} y^{(\gamma+1)/2 }\Phi(dy)  <c z^{\frac{\gamma+3}{2}}< \infty, 
\]
with $c(z)>0$ for every $z.$

We conclude that, if $\mathcal K$ is a compact subset of $\mathbb R_*$, then there exists a constant $c>0$ such that 
\[
\sup_{z \in \mathcal K } \int_{(0,z]}\int_{(z-y,z] } x^{1-\lambda}  \Phi(dx)  y^{\gamma+\lambda }\Phi(dy)< c   < \infty. 
\]
Similarly, it is possible to prove that there exists another constant $c>0$ such that 
\[
\int_{(0,z]}\int_{(z-y,z] } x^{1+\lambda+\gamma} \Phi(dx)  y^{-\lambda}  \Phi(dy)<c <  \infty
\] 
and we conclude that
$J_\Phi \in L^\infty_{loc}(\mathbb R_*) $.

We now prove that $\Phi$ solves \eqref{eq:ss_eq not AC} using a similar argument as in Section 7 in \cite{ferreira2019stationary}.
First of all we prove that the measure $\Phi_{\varepsilon_n}$ satisfies, for every test function $\varphi \in C_c(\mathbb R_*)$, the equality
\begin{align}\label{eq:weak_flux}
&\int_{\mathbb R_*} \varphi(z) J_{\Phi_{\varepsilon_n}}(z) dz=\int_{\mathbb R_*} \varphi(z)\int_{(0,z]} x \eta_{\varepsilon_n}(dx) dz-  \int_{\mathbb R_*} \varphi(z) \int_{(0,z]} x\Phi_{\varepsilon_n}(dx) dz\\
&
+ \frac{2}{1-\gamma} \int_{\mathbb R_*}  \Xi_\varepsilon(z) z^2 \varphi(z)\Phi_{\varepsilon_n} (dz). \nonumber
\end{align}
Since the measure $\Phi_\varepsilon$ satisfies \eqref{ss with epsilon}, we conclude, 
as in Lemma \ref{lem:flux a_eps_R}, that the density $\phi_{\varepsilon_n}$ of $\Phi_{\varepsilon_n}$ satisfies
\begin{align*}
 J_{\phi_{\varepsilon_n}}(z)&=\int_0^z  x \eta_{\varepsilon_n}(x) dx -  \int_0^z x\phi_{\varepsilon_n}(x) dx+ \frac{2}{1-\gamma} \Xi_\varepsilon(z) z^2 \phi_{\varepsilon_n} (z) \quad a.e.\  z>0. 
\end{align*}
 Integrating against a test function $\varphi \in C_c(\mathbb R_*)$ we conclude that equation \eqref{eq:weak_flux} holds. 
We aim now at passing to the limit as $n$ goes to infinity in equation \eqref{eq:weak_flux}. 

We plan to show that estimate \eqref{eq:estimate_epsilon} implies that $\Phi$ satisfies the hypothesis of Lemma \ref{ferreira lemma 6.1}. 
To this end, we only need to ensure that 
\begin{equation} \label{inequality for lemma 6.1}
\frac{1}{z} \int_{z/2}^z \phi_{\varepsilon_n}(x) dx \leq \left( \frac{ C}{ z^{3+\gamma} }\right)^{\frac{1}{2}},\quad z  >0,
\end{equation}
for some constant $C>0$. 

Notice that there exists $m>0$ such that \eu{$[z/2, z] \subset \bigcup_{i=1}^{m} [(8/9)^i z, (8/9)^{i-1} z ]$. }
Since for every $i$
\eu{
\[
\frac{1}{z} \int_{(8/9)^i z}^{ (8/9)^{i-1} z} \phi_{\varepsilon_n}(x) dx= \frac{1}{ z}\left(\frac{8}{9}\right)^{i-1} \int_{ z}^{ 8z/9 } \phi_{\varepsilon_n}(y) dy \leq \left( \frac{ C}{ z^{3+\gamma} }\right)^{\frac{1}{2}},\ z  >0,
\] }
then \eqref{inequality for lemma 6.1} holds and we can apply Lemma \ref{ferreira lemma 6.1} to conclude that for any $\bar \varepsilon>0$ there is a $\delta_0>0$ depending on $\bar \varepsilon$ and $\gamma$, such that  for any $\delta \leq \delta_0$ and any $\varphi \in  C_c(\R_*)$
\begin{equation}\label{eq:estimate_J13_eps}
\left|\sum\limits_{j \in \{1,3\}} \int_{(0, \infty)} J_j(z,\delta, \Phi_{\varepsilon_n}) \varphi(z) dz\right|\leq C\bar \varepsilon \| \varphi\|_{L^\infty(0,\infty)}.
\end{equation}

Since $\varphi$ is compactly supported and for every compact set $K$ the set $\bigcup_{z \in K} \Sigma_2 \cap \Omega_z $ is bounded, using the fact that $\Phi_{\varepsilon_n}$ converges to $\Phi$ in the \eu{weak$-*$} topology, we have the following limits as $n \to \infty $,
\begin{equation*}
\int_0^\infty  J_2(z,\delta,\Phi_{\varepsilon_n}) \varphi(z) dz \to \int_0^\infty J_2(z,\delta,\Phi) \varphi(z) dz,
\end{equation*}
\begin{equation*}
\int_0^\infty  \int_{(0,z)} x \Phi_{\varepsilon_n}(dx) \varphi(z)dz \to \int_0^\infty  \int_{(0,z)} x \Phi(dx) \varphi(z)dz
\end{equation*} 
\begin{equation*}
\int_0^\infty  \int_{(0,z)}  \Xi_{\varepsilon_n}(x) x \Phi_{\varepsilon_n}(dx) \varphi(z)dz \to \int_0^\infty  \int_{(0,z)} x \Phi(dx) \varphi(z)dz
\end{equation*} 
 and
\begin{equation*}
\int_0^\infty \Xi_{\varepsilon_n}(z) z^2 \varphi(z) \Phi_{\varepsilon_n}(z) dz  \to \int_0^\infty  z^2 \varphi(z) \Phi(z) dz.
\end{equation*} 
These limits, together with \eqref{eq:estimate_J13_eps}, imply  an upper estimate for the difference between the right-hand side of \eqref{eq:weak_flux}  and $\int_{(0, \infty)} J_2(z,\delta, \Phi) \varphi(z) dz$, 
 \begin{eqnarray}\label{eq:estimateLHS}
&\left|\int_0^\infty  J_2(z,\delta, \Phi) \varphi(z) dz-\int_0^\infty  \varphi(z) dz + \int_0^\infty \int_{(0,z]} x\Phi(dx) \varphi(z) dz  
- \frac{2}{1-\gamma}  \int_0^\infty  z^2 \varphi(z)\Phi (z )dz\right|\nonumber \\
& \leq C \bar \varepsilon \| \varphi\|_\infty. 
 \end{eqnarray}

Using \eqref{eq:estimate_Phi}, we can apply again Lemma \ref{ferreira lemma 6.1} to $\Phi$, to conclude that 
 \begin{equation}\label{eq:estimate_J13}
\left|\sum\limits_{j \in \{1,3\}} \int_0^\infty J_j(z,\delta, \Phi) \varphi(z) dz\right|\leq C\bar \varepsilon \| \varphi\|_\infty.
\end{equation}
Finally, estimates \eqref{eq:estimateLHS} and \eqref{eq:estimate_J13}  imply
\eu{\begin{eqnarray*}
\left|  \int_0^\infty \int_{\Omega_z} x K(x,y)\Phi(dx)\Phi(dy) \varphi(z) dz - \int_0^\infty  \varphi(z) dz +  \int_0^\infty  \int_{(0,z]} x\Phi(dx) \varphi(z) dz  \right. \\
+ \left. \frac{2}{1-\gamma}  \int_{\mathbb R_*}  z^2 \varphi(z)\Phi (dz) \right|
\leq C \bar \varepsilon \| \varphi\|_\infty 
\end{eqnarray*}}
for any $\bar \varepsilon>0$ and any $\varphi \in C_c(0,\infty)$, which implies that $\Phi$ satisfies \eqref{eq:ss_eq not AC}. 
\end{proof}

\section{Moment bounds satisfied by the self-similar profile} \label{sec:moments}
\begin{proposition}[Moment bounds]\label{prop:moments}
Assume $K$, $\eta$, $\lambda$ and $\gamma$ to be as in the assumptions of Theorem \ref{thm: existence of a ss sol}. 
The solution of \eqref{eq:ss_eq not AC} with respect to $K$, $\eta$, constructed in Proposition \eqref{prop:Phi}, satisfies for every $\mu \in \mathbb R$
 \begin{equation}\label{moment bounds}
\int_1^\infty x^\mu \Phi(dx)< \infty.
\end{equation}
\end{proposition}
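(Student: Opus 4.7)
Proof plan.

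The plan is to reduce the statement to finiteness of all integer moments $M_k := \int_0^\infty x^k \phi(x)\,dx$, where $\phi$ is the density of $\Phi$, and to prove $M_k<\infty$ by induction on $k$. This reduction suffices because, for $\mu \ge 1$, $x^\mu \le x^{\lceil \mu \rceil}$ on $[1,\infty)$ gives $\int_1^\infty x^\mu \Phi(dx) \le M_{\lceil \mu \rceil}$, while for $\mu < 1$ the integral is bounded by $M_1 \leq 1$ from \eqref{first mom bound}. The base of the induction is thus $M_1 \le 1$.

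The core tool is the moment identity
\begin{equation*}
\frac{2\mu - 1 - \gamma}{1-\gamma}\, M_\mu \;=\; \frac{1}{2}\iint_{\mathbb{R}_*^2} K(x,y)\bigl[(x+y)^\mu - x^\mu - y^\mu\bigr]\phi(x)\phi(y)\,dx\,dy,\qquad \mu > \tfrac{1+\gamma}{2},
\end{equation*}
which I would derive by testing the weak formulation \eqref{eq:ss_eq not AC} against $\varphi(z)=z^{\mu-2}\chi_R(z)$ for a smooth cutoff $\chi_R$ of $[\delta,R]$ and letting $\delta\to 0$, $R\to\infty$; the divergent contribution of $-\int z^{\mu-2}\,dz$ cancels with the contribution from $\int_{(0,z]} x\Phi(dx)$ using the normalization $\int x\phi\,dx=1$ built into Definition \ref{def:ss_sol}. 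To avoid any a priori assumption on $M_\mu$, I would work throughout with the truncated moment $M_\mu^R := \int x^\mu \chi_R(x)\phi(x)\,dx$, derive the corresponding inequality uniformly in $R$, and conclude by monotone convergence. For the inductive step, assuming $M_1,\dots,M_k$ finite, apply the identity with $\mu=k+1$, expand $(x+y)^{k+1}-x^{k+1}-y^{k+1} = \sum_{j=1}^k\binom{k+1}{j}x^j y^{k+1-j}$, and use the kernel bound \eqref{kernel gamma lambda} to majorize the right-hand side by a finite sum of products $M_{j+\gamma+\lambda}\,M_{k+1-j-\lambda}$ and $M_{j-\lambda}\,M_{k+1-j+\gamma+\lambda}$. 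Every exponent is strictly less than $k+1$ since $\gamma+\lambda<1$ and $-\lambda<(1+\gamma)/2<1$ (consequences of $|\gamma+2\lambda|<1$ and $\gamma<1$), and each is $\ge q:=\min\{1+\gamma+\lambda,1-\lambda\}$, so the small-$x$ part of each factor is finite by \eqref{bound p q}.

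The main obstacle is that some exponents may lie in $(k,k+1)$: this occurs at $j=k$ when $\gamma+\lambda>0$, or at $j=1$ when $\lambda<0$, so the factors $M_{k+\gamma+\lambda}$ or $M_{k-\lambda}$ are not directly controlled by the induction hypothesis. I would handle this by splitting at a threshold $A$:
\begin{equation*}
M_{k+\alpha} \;\leq\; C + A^{\alpha}\,M_k + A^{\alpha-1}\!\int_A^\infty x^{k+1}\phi(x)\,dx, \qquad \alpha\in(0,1),
\end{equation*}
with $\alpha=\gamma+\lambda$ or $\alpha=-\lambda$. Substituting into the inequality for $M_{k+1}^R$ produces a term proportional to $A^{\alpha-1}M_{k+1}^R$ which, since $\alpha-1<0$, can be absorbed into the left-hand side for $A$ sufficiently large, yielding $M_{k+1}^R\le C(A)$ uniformly in $R$. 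For $k\ge 2$ the other factor in each product is bounded by the induction hypothesis and the absorption is linear. The delicate case is the base step $k=1\to M_2$ with simultaneously $\gamma+\lambda>0$ and $\lambda<0$: both factors of $M_{1+\gamma+\lambda}\,M_{1-\lambda}$ require splitting, which produces a quadratic term $\sim A^{\gamma-2}(M_2^R)^2$. This is resolved by a continuity/bootstrap argument on the continuous, nondecreasing map $R\mapsto M_2^R$, starting from $M_2^0=0$: since $\gamma<2-\gamma$, for $A$ large enough the quadratic inequality confines $M_2^R$ strictly below a threshold of order $A^{2-\gamma}$, uniformly in $R$, and monotone convergence then yields $M_2<\infty$, closing the induction.
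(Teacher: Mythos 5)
Your route is genuinely different from the paper's. You induct on integer moments and compensate for the boundary exponents $k+\gamma+\lambda$, $k-\lambda$ that lie in $(k,k+1)$ by splitting at a threshold $A$ and absorbing. The paper instead tests against $\varphi(x)=x^{n\delta-1}\chi_{(1,\infty)}(x)$ for a \emph{small} increment $\delta$ chosen so that $\delta+\gamma+\lambda\le 1$ and $\delta-\lambda\le 1$ (condition \eqref{eq:cond_delta}); this guarantees, after decomposing into $1\le y\le x$ etc., that every $x$-exponent on the right-hand side of \eqref{eq:delta_main_ineq}--\eqref{case 1:bound on n+d} is at most $1+(n-1)\delta$, i.e.\ strictly controlled by the \emph{previous} moment. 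No absorption, no quadratic bootstrap, and no $R$-truncation beyond the built-in cutoff at $1$ are needed; the recursion \eqref{case 1:bound on n+d} closes directly. Your preliminary reductions (the moment identity, the verification that $\gamma+\lambda<1$ and $-\lambda<1$, the small-$x$ finiteness via \eqref{bound p q}, the fact that only $j=1$ and $j=k$ are problematic for $k\ge 2$) are all correct.

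There is, however, a genuine gap in the absorption step as written. The split gives
\begin{equation*}
M_{k+\alpha}\ \le\ C + A^{\alpha}M_k + A^{\alpha-1}\int_A^{\infty}x^{k+1}\phi(x)\,dx,
\end{equation*}
and the last integral is \emph{untruncated}; before $M_{k+1}<\infty$ is established it could be $+\infty$, making the inequality vacuous. You assert that, once everything is set up with the cutoff $\chi_R$, this term becomes proportional to $A^{\alpha-1}M_{k+1}^R$ and can be absorbed into the left-hand side, but that is precisely what needs to be proved and it is nontrivial: if one tests \eqref{eq:ss_eq not AC} against $\varphi(z)=z^{k-1}\chi_{(1,R)}(z)$, then $\psi(z)=\int_0^z\varphi$ saturates at $\psi(R)$, and after symmetrising and expanding one finds factors like $\int \min\{x,R\}^{j}x^{1+\gamma+\lambda}\phi\,dx$. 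For $x>R$ this is $R^{j}x^{1+\gamma+\lambda}\phi(x)$, and when $\gamma+\lambda>0$ the tail $\int_R^\infty x^{1+\gamma+\lambda}\phi\,dx$ is not controlled by $M_{k+1}^R$ (nor is it known to be finite at this stage, since the polynomial estimate \eqref{eq:estimate_Phi} only gives integrability of powers below $(1+\gamma)/2<1+\gamma+\lambda$). In short, the cutoff in the test function does not automatically confine the $x$-integration to $[1,R]$ in the products produced by the kernel bound, so the ``uniform-in-$R$ inequality'' you invoke is not established. The same issue afflicts the quadratic bootstrap at $k=1$: the coefficients $a,b,b',c$ in your inequality for $M_2^R$ would a priori depend on untruncated quantities. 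This is exactly the difficulty the paper's choice of a small increment $\delta$ is designed to avoid, so the gap is substantive rather than cosmetic; it can very likely be repaired (e.g.\ by a more careful choice of cutoff or a H\"older interpolation $M_{k+\alpha}^R\le (M_k^R)^{1-\alpha}(M_{k+1}^R)^\alpha$ applied to properly truncated moments), but as written the argument does not close.
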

\begin{proof}
The bound \eqref{moment bounds} for $\mu<1$ follows directly from \eqref{first mom bound}.

We now focus on proving the  bound for $\mu >1$. 
\eu{Integrating \eqref{eq:ss_eq} against a positive test function
$ \varphi \in L^1_{\text{loc}}(\R_+)$ and denoting with  $\psi(z):=\int_0^z \varphi(x) dx $}, we obtain
\begin{align} \label{useful ineq for growth bounds}
\frac{2}{1-\gamma} \int_{\mathbb R_*} z^2 \varphi(z)\Phi(dz) \leq  \int_{\mathbb R_*} \int_{\mathbb R_*}  [\psi(x+y) - \psi(x) ] x K(x,y) \Phi(dy) \Phi(dx). 
\end{align}
Let $\delta$ be a small positive constant satisfying the two conditions 
\begin{equation}\label{eq:cond_delta}
 \max \{\gamma,\ \gamma+\lambda,\ -\lambda\} + \delta\ \leq\ 1 \quad \text{ and } \quad 1-\lambda+ \delta \leq 1, \text{ if }\lambda >0  
\end{equation} 
 and choose $\varphi(x)=x^{\delta-1}\chi_{(1,\infty)}(x)$, where $\chi_A(x) = 1$ if $x \in A$ and $\chi_A(x) = 0$ otherwise.  Plugging $\varphi$ in \eqref{useful ineq for growth bounds}, we obtain
\begin{align}\label{eq:delta_main_ineq}
\frac{2}{1-\gamma} \eu{\int_{[1, \infty)} x^{1+\delta} \Phi(dx) } \leq \frac{1}{\delta} \int_{\R_*} \int_{[1, \infty)}  [(x+y)^{\delta} - x^{\delta} ] x K(x,y) \Phi(dx) \Phi(dy)\\
+ \frac{1}{\delta} \int_{\R_*} \int_{(1-y,1]}  (x+y)^{\delta}  x K(x,y) \Phi(dx) \Phi(dy). \nonumber
\end{align} 
Next we derive estimates to each term of the right-hand side. We start by dividing the domain of integration of the first term into two  regions defined by $y<1,\ x\geq 1$ and by $x,y\geq 1$, respectively. 

In the region $y<1,\ x\geq 1$, using the upper bound for the kernel \eqref{kernel gamma lambda} and the estimates \eqref{bound p x y}-\eqref{bound p y x}, we have that
\begin{align}\label{eq:integral2}
&\int_{(0,1)} \int_{[1, \infty)}  [(x+y)^{\delta} - x^{\delta} ] x K(x,y) \Phi(dx) \Phi(dy) \nonumber \\
& \leq C(\delta)\int_{(0,1)} \int_{[1, \infty)}   x^{\delta} y K(x,y) \Phi(dx) \Phi(dy) \nonumber \\
&\leq C(\delta)c_2 \int_{(0,1)} \int_{[1, \infty)} \left( x^{\delta + \gamma+\lambda} y^{1-\lambda} + y^{1+\gamma +\lambda} x^{\delta-\lambda} \right) \Phi(dx) \Phi(dy) 
\end{align}
which is finite by \eqref{first mom bound}-\eqref{bound p q}.

In the second region  $x,y\geq 1$ we use  the symmetry of the kernel, as well as \eqref{bound p x y} and \eqref{bound p y x} to obtain 
\begin{align}
&\iint_{[1,\infty)^2}  [(x+y)^{\delta} - x^{\delta}  ] x K(x,y) \Phi(dy) \Phi(dx) \leq  \label{eq:integral1} \\ 
&\leq C(\delta)\iint_{ \{ 1 \leq y \leq x \} } x^{\delta} y K(x,y) \Phi(dy) \Phi(dx)  + 
c(\delta)\iint_{ \{1 \leq x \leq y \} } y^{\delta} x K(x,y) \Phi(dy) \Phi(dx)   \nonumber\\
&= (C(\delta)+c(\delta)) \iint_{ \{ 1 \leq y \leq x  \} } x^{\delta} y K(x,y) \Phi(dy) \Phi(dx)  \nonumber .
\end{align}
The upper bound for the kernel \eqref{kernel gamma lambda} implies 
\[ x^{\delta} y K(x,y)  \leq c_2   \left( x^{\delta + \gamma+\lambda} y^{1-\lambda} + y^{1+\gamma +\lambda} x^{\delta-\lambda} \right),\] which yields the following estimates, for $1 \leq y \leq x$, 
\begin{align*}
& x^{\delta} y K(x,y)
  \leq  c_2 \left( x^{\delta +\gamma+\lambda} y +   x^{1+ \delta-\lambda} y^{\gamma +\lambda}\right),   \quad \lambda> 0 \\
 & x^{\delta} y K(x,y)
  \leq  c_2 \left( x^{\delta +\gamma} y \left( \frac{x}{y} \right)^\lambda  +   x^{\delta-\lambda+\gamma+\lambda} y \right),  \quad \lambda\leq 0,\ \gamma+\lambda>0\\
&  x^{\delta} y K(x,y)
  \leq  c_2 \left( x^{\delta +\gamma} y \left( \frac{x}{y} \right)^{\lambda} +  x^{\delta-\lambda} y\right),   \quad \lambda\leq 0,\ \gamma+\lambda \leq 0.
\end{align*}
The  bounds \eqref{first mom bound}-\eqref{bound p q}  together with the condition on $\delta$ \eqref{eq:cond_delta} then ensure that \eqref{eq:integral1} is finite.

We now focus on the second term of \eqref{eq:delta_main_ineq}. We divide the domain of integration into two regions defined by $y>1, \ x\leq 1$ and by $1-y< x\leq 1,\ y \leq 1$. 
In the first region $y>1, x\leq 1$, it holds 
\begin{align*}
\int_{(1,\infty)} \int_{(0,1]}  (x+y)^{\delta}  x K(x,y) \Phi(dx) \Phi(dy) 
\leq 2^\delta \int_{(1,\infty)} \int_{(0,1]}  y^{\delta}  x K(x,y) \Phi(dx) \Phi(dy). 
\end{align*}
Similarly to the region $y<1,x \geq 1$ (cf.\ \eqref{eq:integral2}), this  integral is finite  by \eqref{kernel gamma lambda} and \eqref{first mom bound}-\eqref{bound p q}.
In the second region $1-y < x\leq 1,\ y \leq 1$, we notice that  $(x+y)^\delta \leq 2^\delta$. Therefore,
\begin{align*}
\int_{(0,1]} \int_{(1-y,1]}  (x+y)^{\delta}  x K(x,y) \Phi(dx) \Phi(dy) 
\leq 2^\delta\int_{(0,1]} \int_{(1-x,1]}  x K(x,y) \Phi(dx) \Phi(dy) ,
\end{align*}
which is bounded by $J_\Phi(1) $ that is finite by Proposition \ref{prop:Phi}.

Thus, we conclude that
\begin{equation} \label{case 1:bound on 1+d}
\frac{2}{1-\gamma} \int_{[1,\infty)} z^{1+\delta} \Phi(dz)  < \infty.
\end{equation}

Let us select now $\varphi(x)=x^{n \delta-1} \chi_{(1, \infty)}(x)$ in \eqref{useful ineq for growth bounds}
\begin{align*}
&\frac{2}{1-\gamma} \int_{\R_*} x^{1+n\delta} \phi(x) dx \leq \frac{1}{n\delta} \int_{\R_*} \int_{[1, \infty)}  [(x+y)^{n\delta} - x^{n\delta} ] x K(x,y) \Phi(dx) \Phi(dy)  \\
&+ \frac{1}{n\delta} \int_{\R_*} \int_{(1-y,1]}  (x+y)^{n\delta}  x K(x,y) \Phi(dx) \Phi(dy). 
\end{align*}
We divide the domains of integration as before and conclude that, for some constant $ c_{\delta,n}$ that depends on $\delta$ and $n$, the following estimate holds
\begin{align*}
& \frac{2}{1-\gamma} \int_{\R_*} x^{1+n\delta} \phi(x) dx \leq c_{\delta,n} ( \int_{(0,1)} \int_{[1, \infty)}   x^{n\delta} y K(x,y)\phi(y) \phi(x) dx dy
 \\
& +  \iint_{ \{x \geq y \geq 1 \} } x^{n\delta} y K(x,y) \Phi(dx) \Phi(dy) 
 + \int_{(1,\infty)} \int_{(0,1]}  y^{n\delta}  x K(x,y) \Phi(dx) \Phi(dy) \\
& + \int_{(0,1]} \int_{(1-x,1]}  x K(x,y)\Phi(dx) \Phi(dy) ).
\end{align*} 
The last term is bounded by $ c_{\delta,n} J_\Phi(1)< \infty$. The third term may be estimated exactly  as the first term. The first  and second terms may be estimated as  before  using the upper bound for the kernel \eqref{kernel gamma lambda} 
$$ x^{n\delta} y K(x,y) \leq c_2\left( x^{(n-1)\delta + \delta + \gamma+\lambda} y^{1-\lambda} +  y^{1+\gamma +\lambda} x^{(n-1)\delta + \delta-\lambda}\right).$$
It then \eu{follows by} the choice of $\delta$ 
that, for some constant $\tilde c_{\delta,n}$ depending on $\delta$ and $n$, as well as on the parameters of the kernel, we have
\begin{align} \label{case 1:bound on n+d} 
\int_{(1,\infty)}  x^{1+n \delta} \Phi(dx)  \leq 3\tilde c_{\delta,n} \int_{(1,\infty)}  x^{1+(n-1) \delta} \Phi(dx) + c_{\delta,n} J_\Phi(1). 
\end{align}
The bound \eqref{moment bounds} follows by induction combining  \eqref{case 1:bound on 1+d} and \eqref{case 1:bound on n+d}. 
\end{proof}
\begin{proposition}\label{prop:expo}
Assume $K$, $\eta$, $\lambda$ and $\gamma$ to be as in the assumptions of Theorem \ref{thm: existence of a ss sol}. 
The solution $\Phi$ of \eqref{eq:ss_eq not AC}, constructed in Proposition \ref{prop:Phi}, satisfies \eqref{exp bound} for some positive constant $L.$
\end{proposition}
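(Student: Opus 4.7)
The plan is to upgrade the polynomial moment bound of Proposition \ref{prop:moments} to exponential integrability by showing that the integer moments $M_n := \int_1^\infty x^n \Phi(dx)$ grow at most factorially: I will prove that there exist constants $K, A > 0$ and an exponent $p \geq 0$ with $M_n \leq K A^n n!/(1+n^p)$ for every $n \geq 0$. Once this is established, $\int_1^\infty e^{Lx}\Phi(dx) = \sum_{n \geq 0} L^n M_n/n! \leq K \sum_{n \geq 0} (LA)^n/(1+n^p)$ converges for any $L < 1/A$, yielding \eqref{exp bound}.

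To derive a recursion for the $M_n$'s, I specialize the inequality \eqref{useful ineq for growth bounds} (derived in the proof of Proposition \ref{prop:moments}) to the test function $\varphi(z) = z^{n-1}\chi_{[1,\infty)}(z)$, whose primitive is $\psi(z) = (z^n-1)/n$ on $[1,\infty)$ and $0$ on $(0,1)$. The left-hand side reduces to $\frac{2}{1-\gamma} M_{n+1}$. Expanding $(x+y)^n - x^n = \sum_{k=1}^n \binom{n}{k} x^{n-k} y^k$ via the binomial theorem, using the identity $\binom{n}{k}/n = \binom{n-1}{k-1}/k$, the upper bound on the kernel in \eqref{kernel gamma lambda}, the inequalities $\gamma+\lambda<1$ and $-\lambda<1$ (both following from $|\gamma+2\lambda|<1$ together with $\gamma<1$), and the small-$x$ moment bounds in \eqref{bound p q}, produces a recursion of the schematic form
\[
M_{n+1} \leq C_1 \sum_{j=0}^{n-1} \frac{\binom{n-1}{j}}{j+1}\,M_{n-j+1}\,M_{j+2} \;+\; C_2 \sum_{j=1}^n \binom{n}{j}\,M_{j+1},
\]
in which the second sum collects the contribution from the integration region $\{x<1,\ x+y\geq 1\}$, which is controlled thanks to the boundedness of the integrals $\int_0^1 x^{1\pm\lambda}\Phi(dx)$ and $\int_0^1 x^{1+\gamma+\lambda}\Phi(dx)$ from \eqref{bound p q}.

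A strong induction on $n$ then closes the bound. After substituting the ansatz into the first (quadratic) sum and reducing via the factorial identities $(n-j+1)!/(n-1-j)! = (n-j+1)(n-j)$ and $(j+2)!/(j+1)! = j+2$, the summand becomes proportional to $(n-j+1)(n-j)(j+2)/[(1+(n-j+1)^p)(1+(j+2)^p)]$, which for a suitably chosen $p \geq 2$ is summable uniformly in $n$ against the weight $\binom{n-1}{j}/(j+1)$ coming from the combinatorial factor; the second (linear) sum is handled using the closed-form identity $\sum_{j=0}^n \binom{n}{j} A^j/(j+1) = [(1+A)^{n+1}-1]/[A(n+1)]$ together with the factorial growth in the denominator of the ansatz. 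The combinatorial cancellations yield bounds of the same order as $K A^{n+1}(n+1)!/(1+(n+1)^p)$, so choosing $K, A$ with $CKA^2$ sufficiently small (for an absolute constant $C$) closes the induction. The finitely many base cases $n \leq n_0$ are absorbed by enlarging $K$, using Proposition \ref{prop:moments}.

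The main obstacle is the combinatorial bookkeeping. A naive ansatz $M_n \leq K A^n n!$ does \emph{not} close the induction, because the coagulation sum contributes a polynomial-in-$n$ excess that cannot be absorbed; the correction $1/(1+n^p)$ for a suitable $p$ is essential to exploit the summability of the resulting series in $j$. A secondary difficulty is the taming of the second sum, whose binomial coefficients are of order $2^n$: the combination of the correction in the ansatz and the explicit identity above reduces it to a geometric series in $A$ of order strictly smaller than the target, so that it can be absorbed into the first sum's contribution.
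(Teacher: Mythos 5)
Your approach is genuinely different from the paper's. The paper follows the Fournier--Laurençot strategy from \cite{fournier2006local}: it introduces the regularized exponential generating function $\Psi_a(L)=\int \frac{x^2}{\min\{x,a\}}e^{L\min\{x,a\}}\Phi(dx)$, uses the test function $\varphi'(x)=e^{L\min\{x,a\}}$ in \eqref{useful ineq for growth bounds} together with a Jensen-type inequality (exploiting $\gamma+\lambda<1$, $-\lambda<1$) to derive the scalar superlinear differential inequality $\Psi_a'(L)\leq c(\gamma)\,\Psi_a(L)^{(2-\gamma)/(1-\gamma)}$, and then solves this Riccati-type ODE on a short interval $[0,L_0]$, using the mass normalization $\Psi_a(0)\to 1$ as $a\to\infty$. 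This avoids moment-by-moment combinatorics entirely and yields the exponential bound in a few lines.

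Your factorial-moment induction is a plausible alternative but, as sketched, has genuine gaps. First, there is a degree mismatch that the ansatz correction does not by itself resolve: the quadratic sum involves $M_{n-j+1}M_{j+2}$, whose total weight is $n+3$ rather than $n+1$; this excess comes from crudely bounding $M_{m+\alpha}$ by $M_{m+1}$. What is actually needed is an interpolation argument ($M_{m+\alpha}\leq M_m^{1-\alpha}M_{m+1}^{\alpha}$, using $\gamma+\lambda<1$ and $-\lambda<1$ \emph{strictly}) to gain the missing power of $n$; without it the $j=0$ term already scales like a fixed constant times $M_{n+1}$ and cannot be absorbed into the left-hand side. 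Second, the linear sum is more dangerous than you indicate: using only the boundedness of $\int_0^1 x^{1\pm\lambda}\Phi(dx)$, the sum $\sum_j\binom{n}{j}M_{j+1}$ under the factorial ansatz carries an $e^{1/A}$-type factor, so that the linear contribution dominates the target unless $n\gtrsim e^{1/A}$; to control it uniformly one must use that $\int_0^1 x^{k+1}\Phi(dx)$ decays geometrically in $k$, an input about the profile near the origin that does not appear in your sketch. Third, and most seriously, the scheme is circular as stated: the inductive step requires $CKA^2$ small, the base cases $n\leq n_0$ require enlarging $K$, and the threshold $n_0$ itself grows as $A\to 0$ (through the linear-sum estimate), so there is no obvious simultaneous choice of $A,K,n_0$; the paper's generating-function argument sidesteps this entirely since it never isolates individual moments and the only "initial condition" it needs is $\Psi_a(0)<\infty$, which is free. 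So the strategy is not unreasonable, but the three issues above would need to be resolved before this could replace the paper's proof.
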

\begin{proof}
We now prove the exponential bound following the approach of \cite{fournier2006local}. 
Let us introduce the notation $\Psi_a(L):=\int_{\mathbb R_*} \frac{x^2}{\min\{ x,a \}} e^{L\min\{x,a\}}\Phi(dx)  . $
Notice that 
\begin{equation}\label{psi_a_0}
\Psi_a(0)= \int_{(0,a)} \frac{x^2}{\min\{ x,a \}} \Phi(dx) + \int_{[a,\infty)} \frac{x^2}{a} \eu{\Phi(dx)} \leq 1+ \frac{M_2}{a}
\end{equation}
and 
\[
\Psi'_a(L)=\int_{\mathbb R_*} x^2e^{L\min\{x,a\}} \Phi(dx) . 
\]
Considering the test function $\varphi'(x) = e^{L\min\{x,a\}} $ in \eqref{useful ineq for growth bounds}, we deduce 
\begin{align*}
& \Psi'_a(L) \leq c(\gamma) \int_{\mathbb R_*} \int_{\mathbb R_*} x K(x,y) \int_x^{x+y} e^{L \min\{z,a\} } dz  \Phi(dy)   \Phi(dx)   \\
&\eu{\leq } c(\gamma) \int_{\mathbb R_*}  x^{\gamma +\lambda} \mu_{a,L}(d x) \int_{\mathbb R_*}  y^{-\lambda}  \mu_{a,L}(dy) 
\end{align*} 
where we are using the notation $\mu_{a,L}(dx):= x  e^{L \min\{x,a\} } \Phi(dx) . $

Since $\gamma+\lambda < 1 $ and $-\lambda <1 $, the maps $x \mapsto x^{-\frac{1}{\lambda }} $ and $x \mapsto x^{\frac{1}{\lambda +\gamma}} $ are convex functions. By Jensen inequality we obtain that
\eu{
\begin{align*}
& \| \mu_{L,a} \|_{1} \int_{\mathbb R_*} x^{\gamma+\lambda} \frac{\mu_{L,a} (dx) }{\| \mu_{L,a} \|_{1}} \leq 
\| \mu_{L,a} \|_1^{1-\gamma -\lambda} \left( \int_{\mathbb R_*} x \mu_{L,a} (dx) \right)^{\gamma+\lambda} \\
&\leq \Psi_a(L)^{1-\gamma-\lambda} \Psi'_a(L)^{\gamma+\lambda} \nonumber
\end{align*} }
and similarly
\begin{align*}
& \| \mu_{L,a} \|_{1} \int_{\mathbb R_*} x^{-\lambda} \frac{\mu_{L,a} (dx) }{\| \mu_{L,a} \|_{1}} \leq \Psi_a(L)^{1+\lambda} \Psi'_a(L)^{-\lambda} \nonumber. 
\end{align*} 
As a consequence $ \Psi'_a(L) \leq c(\gamma) \Psi_a(L)^{2-\gamma} \Psi'_a(L)^{\gamma}$,
or equivalently 
\[  \Psi'_a(L) \leq \Psi_a(L)^{\frac{2-\gamma}{1-\gamma}} c(\gamma)^{\frac{1}{1-\gamma}}.\]
This implies that if $L \leq \Psi_a(0)^{-\frac{1}{1-\gamma}} c(\gamma)^{-\frac{1}{1-\gamma}}$, then
\[
 \Psi_a(L) \leq \left(  \Psi_a(0)^{-\frac{1}{1-\gamma}} -  c(\gamma)^{\frac{1}{1-\gamma}} L \right)^{\gamma-1 }. 
\]
Since by \eqref{psi_a_0} it follows that $\Psi_a(0) \rightarrow 1 \text{ as } a \rightarrow \infty,$
we deduce that if $L \leq  c(\gamma)^{-\frac{1}{1-\gamma}}$ then 
\[
 \int_{(1, \infty)} e^{ L x} \Phi(dx) \leq \limsup_{a \rightarrow \infty}  \Psi_a(L) < \infty. 
\]
\end{proof}

\begin{proof}[Proof of Theorem \ref{thm: existence of a ss sol}]
Let $\Phi $ be a solution of \eqref{eq:ss_eq not AC} constructed in Proposition \ref{prop:Phi}.  We know that $\Phi$ satisfies \eqref{exp bound} and \eqref{eq:estimate_Phi}.  
We now prove that $\Phi$ is absolutely continuous.
 Since $\Phi $ is a solution of \eqref{eq:ss_eq not AC}, by selecting $\varphi(x)=\frac{1}{z^2}\chi(x)$ with $\chi \in C_c(\mathbb R_*)$, we conclude that
\begin{align*}
\frac{2}{1-\gamma} \left|\int_{\mathbb R_*} \chi(z) \Phi(dz)\right| & \leq \left| \int_{\mathbb R_*} J_{\Phi}(z) \frac{1}{z^2} \chi(z) \varphi(z) dz \right| + \left| \int_{\mathbb R_*} \int_{(z, \infty) } x \Phi(dx)\frac{1}{z^2} \chi(z) dz \right| \\
& \leq \left( 1+ \| J_{|\supp \chi} \|_\infty  \right)\| 1/z^2 \|_{L^p(\supp \chi)} \| \chi \|_{L^q(\supp \chi)} <\infty.
\end{align*} 
Therefore, by density we know that for every $q>1$ and any compact set $\mathcal K \subset \mathbb R_*$
\begin{align*}
 \left|\int_{\mathbb R_*} \chi(z) \Phi(dz)\right| & \leq  C(\eta, \lambda, \gamma, \Phi) \| \chi \|_{L^q(\mathcal K)},  \quad \chi \in L^q(\mathcal K). 
\end{align*} 

This implies that $\Phi$ is absolutely continuous and that its density $\phi$ satisfies \eqref{eq:ss_eq}.
We now show \eqref{point estimate}.
\eu{ First of all we notice that 
	\begin{equation*}
	J_\phi (z) = \int_z^\infty x \phi dx + \frac{2}{1-\gamma} z^2 \phi(z) \text{ for a.e. } \ z >0.
	\end{equation*}
As a consequence we deduce that
\begin{equation}\label{phi as a function of flux}
 \phi(z) \leq \frac{1-\gamma}{2  z^2} J_\phi (z)  \text{ for a.e. } z >0.
\end{equation}
Assume $z>1$, hence we can rewrite $J_\phi $ in the following way
\begin{align}\label{split the flux}
& J_\phi(z) = \int_0^z \int_{z-x}^\infty K(x,y) x \phi(y)  \phi(x) dy dx \nonumber \\
& =  \int_0^1 \int_{z-x}^\infty K(x,y) x \phi(y)  \phi(x) dy dx +  \int_1^z \int_{z-x}^\infty K(x,y) x \phi(y)  \phi(x) dy dx.
\end{align}
Using \eqref{bound p q}, we have that, if $z$ is large enough, then there exists a constant $\tilde{L}>0$ such that
\begin{align*}
& \int_0^1 \int_{z-x}^\infty K(x,y) x \phi(y)  \phi(x) dy dx \leq c_2 \int_0^1 x^{\gamma + \lambda +1 } \phi(x) \int_{z-1}^\infty y^{- \lambda} \phi(y)   dy dx \\
 & + c_2
  \int_0^1 x^{1-\lambda } \phi(x) \int_{z-1}^\infty y^{\gamma+ \lambda} \phi(y)   dy dx \leq e^{- z \tilde{L}} 
\end{align*}}

\eu{We now focus on the second term of \eqref{split the flux}, that for large enough $z$ can be written as a sum of two terms in the following way
\begin{align} \label{sum splitting}
& \int_1^z \int_{z-x}^\infty K(x,y) x \phi(y)  \phi(x) dy dx= \int_{z-1}^z \int_{z-x}^\infty K(x,y) x \phi(y)  \phi(x) dy dx \nonumber \\
&+ \int_1^{z-1} \int_{z-x}^\infty K(x,y) x \phi(y)  \phi(x) dy dx. 
\end{align}
Moreover, since 
\begin{align*}
&\int_1^{z-1} \int_{z-x}^\infty K(x,y) x \phi(y)  \phi(x) dy dx \leq c_2 \int_{z/2}^\infty  x^{\gamma + \lambda +1 }\phi(x) dx \int_1^\infty y^{-\lambda} \phi(y) dy \\
& +c_2 \int_{z/2}^\infty x^{1-\lambda } \phi(x) dx \int_1^\infty y^{\gamma +\lambda} \phi(y) dy +c_2 \int_1^\infty   x^{\gamma+\lambda +1} \phi(x)   dx \int_{z/2}^\infty y^{-\lambda} \phi(y) dy  \\
&+ c_2 \int_1^\infty    x^{1-\lambda} \phi(x) dx  \int_{z/2}^\infty y^{\gamma + \lambda} \phi(y)   dy 
\end{align*}
from this we deduce, thanks to \eqref{exp bound}, that 
\[
\int_1^{z-1} \int_{z-x}^\infty K(x,y) x \phi(y)  \phi(x) dy dx \leq c e^{- z L_1}
\]
for suitable constants $L_1>0$ and $c>0.$
We focus now on the first term of \eqref{sum splitting}. 
Notice that there exists a positive constant $L_4 >0$ such that
\begin{align*}
\int_{z-1}^z \int_{1}^\infty x K(x,y) \phi(y) \phi(x)  dy dx \leq c e^{- L_4 z }. 
\end{align*}
Using \eqref{phi as a function of flux} we deduce that 
\begin{align*}
\int_{z-1}^z \int_{z-x}^1 K(x,y) x \phi(y)  \phi(x) dy dx 
\leq c(\gamma)
\int_{z-1}^z \int_{z-x}^1 \frac{K(x,y)}{x} \phi(y)  J_\phi(x) dy dx 
\end{align*}
Combining all the above inequalities we have that there exists a $\rho >0$ such that
\begin{align*}
J_\phi(x) \leq c e^{-z \rho}+ c(\gamma)
\int_{z-1}^z \int_{z-x}^1 \frac{K(x,y)}{x} \phi(y) J_\phi(x)  dy dx
\end{align*}} 

%

\eu{
On the other hand using Lemma \ref{ferreira lemma 2.10} we deduce that
\begin{align*}
&\int_{z-1}^z \int_{z-x}^1\frac{K(x,y)}{x} \phi(y) J_\phi(x)  dy dx \leq \sup_{x \in [z-1,z]} J_\phi(x) \int_{z-1}^z \int_{z-x}^1 \frac{K(x,y)}{x} \phi(y) dy dx \\
& \leq \sup_{x \in [z-1,z]} J_\phi(x) \left[  \int_{z-1}^z \frac{x^{\gamma + \lambda}}{x} \int_{z-x}^1 y^{ \frac{-2 \lambda - \gamma -3}{2}} dy dx +\int_{z-1}^z  \frac{x^{- \lambda}}{x}  \int_{z-x}^1 y^{ \frac{2 \lambda + \gamma -3}{2}} dy dx \right] \\
& \leq c \sup_{x \in [z-1,z]} J_\phi(x) \left[  \int_{z-1}^z x^{\gamma + \lambda-1} (z-x)^{ \frac{-2 \lambda - \gamma -1}{2}}  dx +\int_{z-1}^z  x^{- \lambda-1}  (z-x)^{ \frac{2 \lambda + \gamma -1}{2}} dx \right] \\
&\leq c  \sup_{x \in [z-1,z]} J_\phi(x) \left[ (z-1)^{\gamma + \lambda-1} \int_{0}^1  y^{ \frac{-2 \lambda - \gamma -1}{2}}  dy +(z-1)^{- \lambda-1}\int_0^1  y^{ \frac{2 \lambda + \gamma -1}{2}}  dy \right]
\end{align*}
This allows us to deduce that there exists a $N_\varepsilon$ such that every $z> N_\varepsilon$ 
\begin{align} \label{delay}
J_\phi(z) \leq c e^{-z \rho}+ \varepsilon\sup_{x \in [z-1,z]} J_\phi(x) 
\end{align} 
with $e^{-\rho} >\frac{\varepsilon}{1-\varepsilon} >0$.
Let 
\[
\tilde{J}_n := \sup_{z \in [n-1,n]} J_\phi(z) 
\]
then by \eqref{delay} we deduce that 
\begin{equation}\label{recursive}
\tilde{J_n} \leq \varepsilon \tilde{J}_{n-1} +\varepsilon \tilde{J}_{n}+ c e^{- n \rho} 
\end{equation}
and by the local boundedness of $J_\phi$
we have that there exists a $c_0>0$ such that $\tilde{J}_{N_\varepsilon} \leq c_0 $. 
Formula \eqref{recursive} implies that for every $k >0$
\[
\tilde{J}_{N_\varepsilon+k} \leq \frac{c}{1-\varepsilon} e^{- \rho (N_\varepsilon+k)} \sum_{i=0}^k  e^{\rho i} \left( \frac{\varepsilon}{1-\varepsilon} \right)^i + c_0 e^{-\rho k} 
\]
As a consequence, for large $z$ we have that $J_\phi(z)  \leq c e^{- \rho z\label{key}}$ where $c$ is a positive constant.}


The inequality $\phi (z) \leq c e^{-\rho z}$ as $z \rightarrow \infty $ follows by the fact that $J_\phi(z) \leq c e^{-\rho z}$ and by equation \eqref{eq:ss_eq}.

We now prove the lower power-law bound \eqref{eq:estimate_Phi above}. 
Let $C$ be any positive constant. For  small enough positive constants $z_0$ and $\varepsilon$ such that $1 - C\varepsilon - \int_0^{z_0} x \phi(x)dx \geq \frac{1}{2}$, it follows by Lemma \ref{ferreira lemma 6.1} that there is a $\delta_0>0$ depending on $\varepsilon$ and $\gamma$ such that for any $\delta\leq \delta_0$ and any $\varphi \in C_c(\R_*)$ 
\begin{equation} \label{eq:J2_lowerbound}
J_2(z,\delta, \phi)   \geq 1+ \frac{2}{1-\gamma}z^2 \phi(z) - C \varepsilon-\int_0^z x\phi(x)xdx \geq \frac{1}{2}, \quad a.e. \ z \in (0,z_0]. 
\end{equation}
\eu{ Moreover,  using a geometrical argument, we deduce that there exist a constant $b \in (0,1)$ depending on $\delta$ such that, for all $z \in [R,2R]$,
\begin{equation*}
\Omega_z \cap \Sigma_2(\delta) \subset \left(\frac{\delta}{\delta+1}z,z\right ]\times \left(\frac{\delta}{\delta+1}z,\frac{z}{\delta}\right] \subset \left(\sqrt{b}R, R/\sqrt{b}\right]^2,
\end{equation*}
with $\Omega_z$ defined by \eqref{eq:Omega_R}, which together with the upper bound for the kernel \eqref{kernel gamma lambda},  $xK(x,y) \leq c R^{\gamma+1}$, yields 
\begin{equation*}
\int_{[R,2R]} J_2(z,\delta, \phi) dz \leq c  R^{\gamma+2} \left( \int_{(\sqrt{b}R, R/\sqrt{b}]} \phi(x)dx \right)^2, \quad R>0 
\end{equation*}
for a constant $c>0$ depending only on $\gamma, \lambda, c_1$ and $c_2$.
This together with \eqref{eq:J2_lowerbound} implies
\begin{equation*}
\frac{R}{2} \leq c  R^{\gamma+2} \left( \int_{(\sqrt{b}R, R/\sqrt{b}]} \phi(x)dx \right)^2, \quad R \in (0,z_0].
\end{equation*}
Hence, the result follows after substituting $R/\sqrt{b}$ by $z$. }

Finally $\int_{\mathbb R_*} x \phi(x) dx =1$ follows by the fact that for every sequence $\{ z_n\}$ such that $\lim_{n \rightarrow \infty} z_n=\infty $ we have that $\lim_{n \rightarrow \infty} J_\phi(z_n)=0$ and, thanks to \eqref{exp bound}, up to a subsequence $\lim_{n \rightarrow \infty }z_n^2\phi(z_n)=0$.
\end{proof}

\section{Regularity of the self-similar profiles} \label{sec:regularity ss}
\begin{proof}[Proof of Theorem \ref{thm:regularity}] 

We divide the proof into steps.

\textbf{Step 1}: for every $\psi \in C^\infty_c(\mathbb R)$, every $\beta >0$ and $0<s<1$ it holds that
\begin{equation}\label{ineq difference}
\| \psi(\cdot+y ) - \psi(\cdot) \|_{H^{-\beta} (\mathbb R)} \leq |y|^s \| \psi \|_{H^{s-\beta}}(\mathbb R) \quad y \in \mathbb R.
\end{equation}
This \eu{follows by} the fact that 
\begin{align*}
&\| \Delta_y \psi(\cdot) \|^2_{H^{-\beta} (\mathbb R)} = \int_{\mathbb R }  (1+|x |^2)^{-\beta} | \Delta_y \hat{\psi}(x)|^2 dx \\
&= \int_{\mathbb R }  (1+|x |^2)^{-\beta} |\hat{\psi}(x) |^2 | e^{ixy}-1 |^2 dx \leq c\int_{\mathbb R }  (1+|x |^2)^{-\beta} |\hat{\psi}(x) |^2 |x|^{2s} |y|^{2s} dx  \\
&  \leq \eu{c} |y|^{2s}   \int_{\mathbb R } |\hat{\psi}(x) |^2 (1+|x|^2)^{s-\beta} dx =\eu{c} |y|^{2s} \| \psi(\cdot) \|^2_{H^{s-\beta} (\mathbb R)}. 
\end{align*}
where we are using the notation $\Delta_y \psi(\cdot) :=  \psi(\cdot+y ) - \psi(\cdot)$.

\textbf{Step 2}: let $U$ be an open set and let us define the operator $T:C^\infty_c(\mathbb R) \rightarrow C^1(U) $ as 
\[
T[\psi](x):=\int_0^1 K(x,y)\phi(y) \Delta_y \psi(x) dy \quad x \in U. 
\]
Then 
\begin{equation} \label{norm of T in L2}
\|T[\psi] \|_{L^2 (U) } \leq C \|\psi \|_{H^{\overline s}(\mathbb R)}, 
\end{equation}
for some positive constant $C>0$ and 
\begin{equation}\label{barra s}
 1 > \overline s  >  \frac{\gamma+1}{2} - \min\{\gamma+\lambda, -\lambda \} \geq \frac{1}{2}. 
\end{equation}
To prove \eqref{norm of T in L2} we notice that
\begin{align*}
\|T[\psi] \|_{L^2 (U) }  &\leq \int_0^1 \left\| K(x,y)\phi(y) \Delta_y \psi(x) \right\|_{L^2 (U)} dy \\ 
& \leq C \int_0^1 y^{\min\{\gamma+\lambda, -\lambda \} } \phi(y) \left\| \Delta_y \psi(x)  \right\|_{L^2 (\mathbb R)} dy \\
&\leq C \int_0^1 y^{\min\{\gamma+\lambda, -\lambda \} +\overline{s} } \phi(y) dy \| \psi  \|_{H^{\overline{s}}(\mathbb R)}. 
\end{align*}
Where, for the last inequality, we use the fact that $\| \Delta_y \psi(x)  \|_{L^2 (\mathbb R)} \leq C |y|^{\overline s } \| \psi  \|_{H^{\overline s}(\mathbb R)}$,
see \cite{simon1990sobolev}.

Inequality \eqref{eq:estimate_Phi} and \eqref{barra s} imply that 
$ \int_0^1 y^{\min\{\gamma+\lambda, -\lambda \} +\overline{s} } \phi(y) dy  < \infty$
and, therefore, \eqref{norm of T in L2} follows.  

\textbf{Step 3}: if $l \geq \beta >1/2$, then 
\begin{equation} \label{norm of T in Hs}
\|T[\psi] \|_{H^{-\beta} (U) } \leq C \|\psi \|_{H^{\overline s-\beta} (\mathbb R) }. 
\end{equation}

First of all we notice that if $f \in H^l (U)$ and $g \in H^{-\beta}(\mathbb R) $, then $ fg \in H^{-\beta }(U)$. 
This is due to the fact that, if $\beta > 1/2$
\begin{align*}
&\left\| fg \right\|_{H^{-\beta}(U ) }= \inf_{\{E \in H^{-\beta }(\mathbb R): E_{|U}=fg \} } \left\| E \right\|_{H^{-\beta}(\mathbb R) } \leq \inf_{\{F \in H^{-\beta }(\mathbb R): F_{|U}=f \} } \left\| g F \right\|_{H^{-\beta}(\mathbb R) } \\
& \leq \inf_{\{F \in H^{-\beta }(\mathbb R): F_{|U}=f \} } \left\| F \right\|_{H^{-\beta}(\mathbb R) } \| g \|_{H^{-\beta}(\mathbb R) }  =   \left\| f \right\|_{H^{-\beta}(U) }  \| g \|_{H^{-\beta}(\mathbb R) }  < \infty. 
\end{align*} 
Since $K \in H^l (\mathbb R_*) $ as well as the fact that it satisfies \eqref{kernel derivatives}, we deduce that 
\begin{align*}
& \|T[\psi] \|_{H^{-\beta} (U) }  \leq \left\| \int_0^1 K(x,y)\phi(y) \Delta_y\psi(x) dy \right\|_{H^{-\beta} (U)} \\
& \leq \int_0^1 \left\|  K(x,y) \Delta_y\psi(x)  \right\|_{H^{-\beta} (U)} \phi(y)  dy  \leq C \int_0^1 y^{\min\{\gamma+\lambda, -\lambda \}} \phi(y) \left\|  \Delta_y\psi(x) \right\|_{H^{-\beta} (\mathbb R)} dy \\
& \leq  C \left\| \psi \right\|_{H^{\overline s-\beta} (\mathbb R)}   \int_0^1 y^{\min\{\gamma+\lambda, -\lambda \}+\overline s} \phi(y) dy \leq  C  \left\| \psi \right\|_{H^{\overline s-\beta} (\mathbb R)}.
\end{align*}
for some positive constant $C$. To deduce the second last inequality, we applied \eqref{ineq difference}. 

\textbf{Step 4}

In this last step we show how to combine the results of the previous steps to prove the regularity of the self-similar profile. 
Considering a test function $\varphi(z)=\psi'(z) $ with $\psi \in C^\infty_c(\mathbb R_*)$ and $\supp (\psi)=[a,b]$ for some $a>b>0$ in \eqref{eq:ss_eq not AC}, we deduce that 
\begin{align*}
&\frac{2}{1-\gamma} \int_{\mathbb R_*} z^2 \psi'(z) \phi(z) dz =\int_{\mathbb R_*}x \phi(x) \left( \psi(x) +\int_{\mathbb R_*}  \Delta_y\psi(x)   K(x,y) \ \phi(y) \right) dx dy. 
\end{align*}
Therefore
\begin{align*}
&\frac{2}{1-\gamma} \int_{\mathbb R_*} \left|  z^2 \psi'(z) \phi(z) \right| dz \leq  \| \psi \|_{L^2([a,b])} \| x \phi(x) \|_{L^2([a,b])} \\
&+ \int_a^b \int_1^\infty   \Delta_y \psi(x)  x K(x,y) \phi(x) \phi(y)  dy dx  + \int_a^b x T[\psi](x)  \phi(x)   dx \\ 
&+ \int_0^a  \int_{a-x}^\infty \psi(x+y) x K(x,y) \phi(x) \phi(y) dx dy. 
\end{align*}
We notice that, for some $C>0$, 
\begin{align*}
& \int_a^b \int_1^\infty  \Delta_y \psi(x) x K(x,y) \phi(x) \phi(y)  dy dx \\
&\leq C  \| \psi \|_{L^2([a,b])} \int_a^b \int_1^\infty  x K(x,y) \phi(x) \phi(y)  dy dx  \leq C \| \psi \|_{L^2([a,b])}
\end{align*} 
and 
\begin{align*}
& \int_0^a  \int_{a-x}^\infty \psi(x+y) x K(x,y) \phi(x) \phi(y) dy dx   \leq  \| \psi \|_{L^2([a,b])} J_\phi(a) \leq C  \| \psi \|_{L^2([a,b])}. 
\end{align*} 

Thanks to Step 2 we deduce that 
\begin{align*}
& \int_a^b x T[\psi](x)  \phi(x) dx \leq \| x \phi(x)  \|_{L^2([a,b] )} \|  T[\psi] \|_{L^2([a,b] )} \leq  \| x \phi(x)  \|_{L^2([a,b] )} \|  T[\psi]\|_{L^2(\mathbb R )}\\
& \leq  \| x \phi(x)  \|_{L^2([a,b] )} \| \psi \|_{H^{\overline s} (\mathbb R)}. 
\end{align*}
By denoting with $\Theta $ the function $z \mapsto z^2 \phi(z) $ we conclude that 
\begin{align*}
 \int_0^\infty \psi'(z) \Theta(z) dz \leq C  \| \psi \|_{H^{\overline s} (\mathbb R)}. 
\end{align*} 
This inequality implies that $\Theta '(z) \in (H^{\overline s})' (\mathbb R)=H^{-{\overline s}} (\mathbb R)$ and therefore $\Theta \in H^{1-{\overline s} }(\mathbb R).$ 
For every test function $\zeta \in C^\infty_c(\mathbb R) $, we have that $\Theta \zeta \in H^{1-{\overline s} }(\mathbb R)$. 

Assume now that  $ \zeta \Theta  \in H^{n(1-{\overline s})}(\mathbb R)$ for any test function $\zeta$ and for some $n \geq 1$.  Then if $l \geq n({\overline s}-1)$, considering a test function which is equal to $\frac{1}{x}$ on $[a,b]$ we deduce that
\begin{align*}
&\int_a^b x T[\psi](x)  \phi(x) dx \leq C  \int_a^b \zeta(x) x^2 T[\psi](x)  \phi(x) dx \\
&\leq \| \zeta \Theta \|_{H^{n(1- {\overline s}) }(\mathbb R)} \|  T[\psi](x) \|_{H^{n({\overline s}-1)}(a,b )} \leq\| \zeta \Theta \|_{H^{n(1-{\overline s})}(\mathbb R)}\| \psi \|_{H^{(n+1){\overline s}-n}(\mathbb R)} . 
\end{align*}
This implies that 
\begin{align*}
 \int_0^\infty z^2   \psi'(z) \phi(z) dz \leq c \| \psi \|_{H^{(n+1){\overline s}-n}(\mathbb R)}  
\end{align*}
and, therefore, $ \Theta \in  H^{(n+1)(1- {\overline s } )}(\mathbb R)$ and the desired result follows. 

Recalling the Sobolev embeddings (\cite{duistermaatdistributions}) and,
differentiating \eqref{eq:ss_eq}, we deduce that $\phi$ satisfies \eqref{coag eq self sim}. 
\end{proof}

\section{Self-similar solutions for the coagulation with constant flux coming from the origin} \label{sec:coag with influx}
\begin{proof}[Proof of Theorem \ref{thm:asymptotic}]
The fact that $F$ satisfies \eqref{F bound} follows by \eqref{bound p q}. 

We notice that for every $\varepsilon >0 $ and every $\varphi \in C^1([0,T], C^1_c(\mathbb R_*))$
\begin{align*}
&\int_{\mathbb R_*} \xi \varphi(\varepsilon,\xi) F(\varepsilon,d \xi) = \int_{\mathbb R_*}  \xi \varphi(\varepsilon,\xi) \varepsilon^{-\frac{\gamma+3}{1-\gamma}} \phi(\xi \varepsilon^{-\frac{2}{1-\gamma}} ) d\xi \leq 
\int_{\mathbb R_*} \varphi(\varepsilon, y \varepsilon^{\frac{2}{1-\gamma}}  ) \varepsilon \phi(y) dy \\
& \leq \varepsilon \| \varphi \|_\infty  
\end{align*}
therefore,$\int_{\mathbb R_*} \xi \varphi(t,\xi) F(t,\xi) d\xi \rightarrow 0 \text{ as } t \rightarrow 0. $
Consequently, via a change of variables and integral manipulations, we deduce that 
\begin{align*}
&\int_{\mathbb R_*}\xi \varphi(t, \xi)  F(t,\xi) d\xi  -   \int_0^t  \int_{\mathbb R_*}  \xi \partial_s \varphi(s, \xi)  F(s,\xi) d\xi \\
&= \int_0^t \int_{\mathbb  R_*} x \varphi(s, x s^{\frac{2}{1-\gamma}}) \phi(x) dx ds +\frac{2}{1-\gamma} \int_0^t \int_{\mathbb  R_*} x^2 \partial_x \varphi(s, x s^{\frac{2}{1-\gamma}}) \phi(x) dx  ds \\
&= \int_0^t \int_{\mathbb  R_*} \int_{(0,x]}\partial_y \varphi(s, y s^{\frac{2}{1-\gamma}}) dy x \phi(x)  dx ds 
+\frac{2}{1-\gamma} \int_0^t \int_{\mathbb  R_*} x^2 \partial_x \varphi(s, x s^{\frac{2}{1-\gamma}}) \phi(x) dx   ds \\
&= \int_0^t \int_{\mathbb  R_*}  \partial_z \varphi(s, z s^{\frac{2}{1-\gamma}})  \left( \int_{(z, \infty)} x \phi(x) dx
+ \frac{2}{1-\gamma}  z ^2  \phi(z) dz  \right) ds. \\
\end{align*} 
Since $\Phi $ solves \eqref{eq:ss_eq not AC}, by considering the test function $\psi(x) =\partial_x \varphi(s, x s^{\frac{2}{1-\gamma}}) $ for a fixed $s>0$, we deduce that 
\begin{align*}
&\int_{\mathbb R_*}\xi \varphi(t, \xi)  F(t,\xi) d\xi  -   \int_0^t  \int_{\mathbb R_*}  \xi \partial_s \varphi(s, \xi)  F(s,\xi) d\xi = \int_0^t \int_{\mathbb  R_*}  \partial_z \varphi(s, z s^{\frac{2}{1-\gamma}})  J_\phi(z) dz . 
\end{align*} 
Since 
\begin{align*}
&\int_{\mathbb  R_*}  \partial_z \varphi(s, z s^{\frac{2}{1-\gamma}})  J_\phi(z) dz  \\
&=\int_{\mathbb R_*} \int_{ \mathbb R_*} \left( \varphi(s, (z+\xi) s^{\frac{2}{1-\gamma}})  - \varphi(s, z s^{\frac{2}{1-\gamma}})  \right) z K(z, \xi) \phi(z) dz \phi(\xi) d\xi \\
&=\int_{\mathbb R_*} \int_{ \mathbb R_*} \frac{K(y,x)}{2} \left( (y+x ) \varphi(s, x+y)  - x \varphi(s, x) - y\varphi(s,y)  \right) F(s,dy) F(s, dx), 
\end{align*}
thus $F$ solves \eqref{eq:F} for every $ \varphi \in C^1([0,T], C^1_c(\mathbb R_*))$. 

Every test function $\varphi \in C^1([0,T], C^1_c(\mathbb R_+))$ can be approximated by a sequence of functions $ \{ \varphi_n \} \subset  C^1([0,T], C_c(\mathbb R_*))$ defined by \eu{$\varphi_n(s,x)=\zeta(xn) \varphi(s,x)$} with  $\zeta \in C^\infty (\mathbb R_+)$ such that $\zeta(x)=1$ if $x \geq 1$ and $\zeta(x)=0$  if $x \leq 1/2.$ 
For every $n \in \mathbb N $ it holds that 
\begin{align}\label{eq:Fn}
 &\int_{\mathbb R_*} \xi \varphi_n(t, \xi)  F(t,\xi) d\xi =  \int_0^t  \int_{\mathbb R_+}  \xi \partial_s \varphi_n(s, \xi)  F(s,\xi) d\xi  ds \\
&+ \int_0^t  \int_{\mathbb R_+} \partial_{\xi} \varphi_n(s,\xi) J_{F(s,\cdot)}(\xi) d\xi ds \nonumber
\end{align}  
Since $\partial_{\xi} \varphi_n(s,\xi ) = \varphi(s,\xi) n  \zeta'(\xi n) + \zeta(\xi n) \partial_\xi \varphi(s,\xi) $, $J_{F(s , \xi)}= J_\phi (\xi s^{-\frac{2}{1-\gamma}}) < \infty $, and moreover $ \int_0^\infty \xi F(t,\xi ) d\xi \leq t$, by Lebesgue's dominated convergence theorem we deduce that
\begin{align*}
 \int_0^t  \int_{\mathbb R_+}  \zeta(\xi n) \partial_\xi \varphi(s,\xi)  J_{F(s,\cdot)}(\xi) d\xi ds \rightarrow  \int_0^t  \int_{\mathbb R_+} \partial_\xi \varphi(s,\xi)  J_{F(s,\cdot)}(\xi) d\xi ds   \text{ as } n \rightarrow \infty, 
\end{align*} 
\begin{align*}
\int_{\mathbb R_*} \xi \varphi_n(t, \xi)  F(t,\xi) d\xi \rightarrow \int_{\mathbb R_*} \xi \varphi(t, \xi)  F(t,\xi) d\xi \text{ as } n \rightarrow \infty
\end{align*} 
and 
\begin{align*}
\int_0^t  \int_{\mathbb R_+}  \xi \partial_s \varphi_n(s, \xi)  F(s,\xi) d\xi  ds \rightarrow \int_0^t  \int_{\mathbb R_+}  \xi \partial_s \varphi(s, \xi)  F(s,\xi) d\xi  ds \text{ as } n \rightarrow \infty. 
\end{align*}
For any $n >0$
\begin{align*}
 \int_0^\varepsilon  \int_{\mathbb R_+} n  \zeta'(\xi n) \varphi(s,\xi)  J_{F(s,\cdot)}(\xi) d\xi ds \rightarrow 0 \text{ as } \varepsilon \rightarrow 0. 
\end{align*} 

\eu{
This, together with the fact that for any $n \in \mathbb N$ it holds that $  \int_0^\infty n  \zeta'(\xi n) d\xi =1 $, that $\supp(n  \zeta'(\cdot n) )\subset \left[\frac{1}{2n}, \frac{1}{n}\right] $  and that $\int_0^\infty x \phi(x) dx =1 $ imply 
\begin{align*}
& \int_\varepsilon^t  \int_{\mathbb R_+} n  \zeta'(\xi n) \varphi(s,\xi)  J_{F(s,\cdot)}(\xi) d\xi ds  \\
& = \int_\varepsilon^t  \int_{\mathbb R_+} n  \zeta'(\xi n) \varphi(s,\xi)  \left[ 1- \int_0^{\xi s^{- \frac{2}{1-\gamma}}}  x \phi(x) dx + \frac{2}{1-\gamma} \left( \xi s^{-\frac{2}{1-\gamma}}\right)^2 \phi(\xi s^{-\frac{2}{1-\gamma}})  \right] d\xi ds  \\
& \rightarrow   \int_0^t  \varphi(s,0 ) ds \text{ as } n \rightarrow \infty \text{ and } \varepsilon \rightarrow 0. 
\end{align*}  }
\eu{ Notice that we have used the fact that 
\begin{align*}
& \int_\varepsilon^t  \int_{\mathbb R_+} n  \zeta'(\xi n) \varphi(s,\xi) \int_0^{\xi s^{- \frac{2}{1-\gamma}}}  x \phi(x) dx d\xi  ds  \\
& \leq \int_\varepsilon^t  \int_{\mathbb R_+} n  \zeta'(\xi n) \varphi(s,\xi) \int_0^{\frac{s^{- \frac{2}{1-\gamma}}}{n}}  x \phi(x) dx  d\xi ds \rightarrow 0 \text{ as } n \rightarrow \infty 
\end{align*} 
and that, by Lemma \ref{ferreira lemma 6.1}, 
\begin{align*} 
&\int_\varepsilon^t  \int_{\mathbb R_+} n  \zeta'(\xi n) \varphi(s,\xi) \left( \xi s^{-\frac{2}{1-\gamma}}\right)^2 \phi(\xi s^{-\frac{2}{1-\gamma}}) d\xi  ds \\
& \leq \int_\varepsilon^t  \int_{1/2n}^{1/n}  \varphi(s,\xi) \left( \xi s^{-\frac{2}{1-\gamma}}\right)^2 \phi(\xi s^{-\frac{2}{1-\gamma}}) d\xi  ds \leq c \left(\frac{1}{n}\right)^{\frac{3-\gamma}{2}} \rightarrow 0 \text{ as } n \rightarrow \infty. 
\end{align*} 
} 
Passing to the limit as $n \rightarrow \infty $ in equation \eqref{eq:Fn}, we deduce that $F $ is a solution of the coagulation with constant flux coming from the origin in the sense of Definition \ref{def:coag eq flu origin}.  
\end{proof}

\appendix
\renewcommand{\theequation}{A.\arabic{equation}}
\section{Appendix} \label{appendix}  \setcounter{equation}{0}

In this Section we write the proof of some auxiliary Lemmas.
\begin{proof}[Proof of Lemma \ref{lem:continuity F}]
First of all we show that for each $f \in C([1,T], \mathcal M_{+,b} (\mathbb R_*))$ and for every $t \in [1,T]$ the functional $\mathcal F[f] (t)$ is a linear and continuous functional on $C_0(\mathbb R_*)$, and consequently, defines a measure in $\mathcal M_{+,b}(\mathbb R_*). $
The linearity follows directly from the definition. 

To check the continuity we notice that for every $\varphi_1, \varphi_2 \in C_c(\mathbb R_*)$
\begin{align*}
\left| 
\langle \mathcal F_1[f](t) , \varphi_1 - \varphi_2 \rangle \right|
\leq \|\varphi_1 - \varphi_2  \|_\infty \| \Phi_0\| 
\end{align*}
and
\begin{align*}
&\langle \mathcal F_2[f](t) , \varphi_1 - \varphi_2 \rangle
 \leq   \frac{a}{2} \|\varphi_1 - \varphi_2  \|_\infty  \max \left\{ \frac{1}{| \beta \gamma|}t^{\beta \gamma +1} , t \right\} \| f\|^2_{[1,T]},
\end{align*}
\begin{align*}
\langle \mathcal F_3[f](t) , \varphi_1 - \varphi_2 \rangle 
\leq \|\varphi_1 - \varphi_2  \|_\infty   \| \eta_\varepsilon \| \tilde{T}, 
\end{align*}
where $\tilde{T}:=T-1.$
Combining all the above inequalities we conclude that
\begin{align*}
\langle \mathcal F[f](t) , \varphi_1 - \varphi_1 \rangle \leq \|\varphi_1 - \varphi_2  \| \left( \| \Phi_0\|  + \frac{a}{2} \max\left\{ \frac{1}{|\beta \gamma|}t^{\beta \gamma +1}, t\right\}   \| f\|^2_{[1,T]}+  \| \eta_\varepsilon \| \tilde{T} \right). 
\end{align*}
Given the fact that $C_0^* (\mathbb R_*) $ is isomorphic with the space $\mathcal M_{b}(\mathbb R_*) $ (see for instance \cite{rudin2006real}) we conclude that the operator $\mathcal F[f](t)$ defines a measure. \\

We now check that $\mathcal F$ maps $ C([1,T], \mathcal X_\varepsilon)$ into itself.  We already showed that $\mathcal F[f](t) \in \mathcal M_{+,b}(\mathbb R_*)$. The fact that $\mathcal F[f](t)((0,\varepsilon])=0$ follows easily by the fact that $\mathcal F_i(t)((0,\varepsilon])=0$ for $i=1,2$. Indeed $\mathcal F_i(t)$ are defined as integrals over measures that are equal to zero in the set $(0,\varepsilon]$. 
Moreover, since for any $t>0$ we have that $\ell(t,x,y) \geq \max\{ x,y \}$, we deduce that for every test function with support contained in $(0,\varepsilon]$ we have that if $x,y>\varepsilon $ then $\Lambda[\varphi] =0$. Thus, for this choice of test function
\begin{align*}
&\langle \mathcal F_2 [f] (t) , \varphi \rangle= \int_1^t \int_{(\varepsilon, \infty)} \int_{(\varepsilon, \infty )} \Lambda[\varphi](s,x,y) e^{- \int_s^t b[f] (\xi,x) d\xi } \frac{K_{a,T}(s,x, y)}{2} f(s,dx)f(s,dy)ds=0, 
\end{align*} 
hence $\mathcal F_2(t)((0,\varepsilon])=0$.

Let us firstly show that $\mathcal F[f]$ is a continuous map from $[1, T] $ to $\mathcal M_{+,b}(\mathbb R_*).$ To this end we notice that for every $t_2\geq t_1>0$  for every $\varphi \in C_0(\mathbb R_*)$ with $\| \varphi \| \leq 1$ we have that
\begin{align*}
\left|  \langle \varphi, \mathcal F_1[f] (t_2) - \mathcal F_1[f] (t_1) \rangle  \right| 
\leq a \max\left\{  \frac{1}{| \gamma \beta |} \left( {t_2}^{\gamma \beta+1} -{ t_1}^{\gamma \beta+1} \right) , t_2-t_1\right\} \| \Phi_0 \| \| f \|_{[1, T]}
\end{align*}

On the other side, we have 
\begin{align*}
& \left|  \langle \varphi, \mathcal F_2[f] (t_2) - \mathcal F_2[f] (t_1) \rangle  \right| 
 \leq \frac{a}{2} \| f\|_{[1,T]}^2    \max\left\{ \frac{1}{|\gamma \beta |}\left(t_1^{\gamma\beta+1} - t_2^{\gamma\beta+1}\right), t_1-t_2 \right\} \cdot \\
&\cdot  \left( 1  + a \max \left\{ \frac{1}{ \gamma \beta } \tilde{T}^{\gamma \beta+1}  ,\tilde{T} \right\} \| f\|_{[1,T]} \right)
\end{align*} 
Moreover, 
\begin{align*}
& \left|  \langle \varphi, \mathcal F_3[f] (t_2) - \mathcal F_3[f] (t_1) \rangle  \right| \leq (t_2 - t_1)\left( a \max\{\tilde{T}^{\gamma \beta},\tilde{T}\}   \|f\|_{[1,T]} +1 \right) \| \eta_\varepsilon \| . 
\end{align*} 
If $\gamma <0 $, then 
\[
\max\left\{ \frac{1}{|\gamma \beta |}\left(t_1^{\gamma\beta+1} - t_2^{\gamma\beta+1}\right), t_1-t_2 \right\}  \leq \max\left\{1, \frac{1}{|\gamma \beta |} \right\}\left( t_1-t_2 \right)
\] 
and, if $\gamma\geq 0$, then 
\[
\max\left\{ \frac{1}{|\gamma \beta |}\left(t_1^{\gamma\beta+1} - t_2^{\gamma\beta+1}\right), t_1-t_2 \right\}  \leq \max\left\{1, \frac{1}{|\gamma \beta |} \right\} \left(t_1^{\gamma\beta+1} - t_2^{\gamma\beta+1}\right).
\] 
Therefore, the continuity of $\mathcal F[f]$ follows by the above inequalities.
\end{proof}
 \begin{proof}[Proof of Lemma \ref{lem:F contraction}]
For every $\varphi \in C_c(\mathbb R_*)$ with $\| \varphi \| \leq 1 $ we have 
\begin{align*}
\left| \langle \varphi, \mathcal F_1[f] (t)- \mathcal F_1 [g] (t) \rangle  \right| 
 \leq   a \max\left\{\frac{1}{\gamma \beta} \tilde{T}^{\gamma \beta +1} ,\tilde{T}\right\} \| f-g \|_{[1,T]} \| f_1\| ,
\end{align*}  
\begin{align*}
\left| \langle \varphi, \mathcal F_3[f] (t)- \mathcal F_3 [g] (t) \rangle  \right| 
\leq a \| f-g \|_{[1,T]}  \max\left\{ \frac{ 1}{\gamma \beta}  \tilde{T}^{\gamma \beta+1} ,\tilde{T} \right\}\|  \eta_\varepsilon \|
\end{align*} 
and
\begin{align*}
 \langle \varphi, \mathcal F_2[f] (t)- \mathcal F_2 [g] (t) \rangle &  \leq 2 a \left( 1+2\| f_1\| \right) \| f-g\|_{[1,T]}     \max \left\{  \frac{1}{\gamma \beta } \tilde{T}^{\gamma \beta+1},\tilde{T}\right\} \\
& +  a^2 \|f-g  \|_{[1,T]} \left( 1+ 2 \| f_1\| \right)^2  \max\left\{ \frac{1}{\gamma^2 \beta^2}  \tilde{T}^{2(\gamma \beta +1)} ,\tilde{T}^2 \right\}.
\end{align*}
where $\tilde{T}:=T-1$. To obtain the above inequalities we have used the fact that $| e^{-x_1} - e^{- x_2 } | \leq | x_1 -x_2 | $, whenever $x_1>0$ and $x_2>0$.

Summarizing
\[
\sup_{t \in[1,T]} \langle \varphi, \mathcal F[f](t)-  \mathcal F[g](t)\rangle \leq  C_T \| f-g\|_{[1,T]}
\]
where 
\begin{align}
& C_T:= a   \max \left\{  \frac{1}{|\gamma \beta| } \tilde{T}^{\gamma \beta+1},\tilde{T}\right\} \cdot \\
&\cdot  \left( \| \eta_\varepsilon\| + \left( 1+ 2 \| f_1 \| \right)^2 a   \max \left\{  \frac{1}{| \gamma \beta| } \tilde{T}^{\gamma \beta+1},\tilde{T} \right\}   + 3 \left( 1+ 2 \| f_1 \| \right)   \right).\nonumber
\end{align}
It is possible to verify that if 
\begin{equation}\label{bound for gamma>0}
\tilde{T}  < \frac{1}{\min\left\{1, \frac{1}{|\gamma \beta|}\right\} } \left( \frac{1}{ 10a } \min\left\{ \frac{1}{\| \eta_\varepsilon \|}, \frac{1}{1+2\| f_1 \|} \right\} \right),
\end{equation} 
then $C_T < \frac{1}{2}.$

The inequalities 
\[
\| \mathcal F_1[f]- f \|_{[1,T]}  \leq a \| f \|_{[1,T]}^2 \max\left\{ \frac{1}{|\gamma \beta|} \tilde{T}^{\gamma \beta+1},\tilde{T} \right\}, 
\] 
\[
\| \mathcal F_2[f] \|_{[1,T]} \leq a\| f \|_{[1,T]}^2 \max\left\{ \frac{1}{|\gamma \beta|} \tilde{T}^{\gamma \beta+1},\tilde{T} \right\}
\] 
and
\[
\| \mathcal F_3[f] \|_{[1,T]} \leq \| \eta_\varepsilon\|  T \leq \| \eta_\varepsilon\|   \max\left\{ \frac{1}{|\gamma \beta|} \tilde{T}^{\gamma \beta+1},\tilde{T} \right\}  
\] 
imply that if $T$ satisfies \eqref{bound for gamma>0}, then \eqref{D_T} holds for $D_T < \frac{1}{2}.$
\end{proof} 

\section*{Acknowledgments} The authors would like to thank Aleksis Vuoksenmaa for the useful discussions and his careful reading of the paper. 
The authors gratefully acknowledge the support of the Hausdorff Research Institute for Mathematics (Bonn), through the \textit{Junior Trimester Program on Kinetic Theory}, of the CRC 1060,
\textit{The mathematics of emergent effects} at the University of Bonn funded through the German
Science Foundation (DFG), the ERC Advanced Grant [number 741487] and the \textit{Atmospheric Mathematics} (AtMath) collaboration of the Faculty of Science of University of Helsinki.
The funders had no role in study design, analysis, decision to publish, or preparation of the
manuscript.

\textbf{Declaration of interest}: none

\vspace{ 2 cm}

\textbf{Marina A. Ferreira} University of Helsinki, Department of Mathematics and Statis- 

tics, P.O. Box 68, FI-00014 Helsingin yliopisto, Helsinki, Finland 

E-mail: marina.ferreira@helsinki.fi 
 
 \vspace{0.5 cm}
 \textbf{Eugenia Franco} University of Helsinki, Department of Mathematics and Statistics,
  
 P.O. Box 68, FI-00014 Helsingin yliopisto, Helsinki, Finland 
 
 E-mail: eugenia.franco@helsinki.fi 
 
  \vspace{0.5 cm}

\textbf{Juan J.L. Vel\'azquez} Institute for Applied Mathematics, University of Bonn, En-

denicher Allee 60, D-53115, Bonn, Germany 

 E-mail: velazquez@iam.uni-bonn.de





  \bibliographystyle{plain}
  \bibliography{bibliography}





\end{document}